\newtheorem{theorem}{Theorem}[section]
\newtheorem{lemma}[theorem]{Lemma}
\newtheorem{corollary}[theorem]{Corollary}
\newtheorem{proposition}[theorem]{Proposition}
\newtheorem{example}[theorem]{Example}
\newtheorem{remark}[theorem]{Remark}
\newcommand{\fqn}{\mathbb{F}_{q^n}}
\newcommand{\F}{{\mathbb F}}
\newcommand{\fq}{{\mathbb F}_{q}}
\newcommand{\la}{\langle}
\newcommand{\ra}{\rangle}
\newcommand{\PG}{\mathrm{PG}}
\newcommand{\N}{\mathrm{N}}
\title{Linear sets on the projective line with complementary weights}
\date{}
\author[V. Napolitano]{Vito Napolitano}
\address{Vito Napolitano, \textnormal{Dipartimento di Matematica e Fisica, Universit\`a degli Studi della Campania ``Luigi Vanvitelli'', Viale Lincoln, 5, I--\,81100 Caserta, Italy}}
\email{vito.napolitano@unicampania.it}
\author[O. Polverino]{Olga Polverino}
\address{Olga Polverino, \textnormal{Dipartimento di Matematica e Fisica, Universit\`a degli Studi della Campania ``Luigi Vanvitelli'', Viale Lincoln, 5, I--\,81100 Caserta, Italy}}
\email{olga.polverino@unicampania.it}
\author[P. Santonastaso]{Paolo Santonastaso}
\address{Paolo Santonastaso, \textnormal{Dipartimento di Matematica e Fisica, Universit\`a degli Studi della Campania ``Luigi Vanvitelli'', Viale Lincoln, 5, I--\,81100 Caserta, Italy}}
\email{paolo.santonastaso@unicampania.it}
\author[F. Zullo]{Ferdinando Zullo}
\address{Ferdinando Zullo, \textnormal{Dipartimento di Matematica e Fisica, Universit\`a degli Studi della Campania ``Luigi Vanvitelli'', Viale Lincoln, 5, I--\,81100 Caserta, Italy}}
\email{ferdinando.zullo@unicampania.it}
\subjclass[2020]{11T71; 11T06; 94B05} 
\keywords{Linear set; projection map;  linearized polynomials; scattered linear set}
\begin{document}

\maketitle

\begin{abstract}
Linear sets on the projective line have attracted a lot of attention because of their link with blocking sets, KM-arcs and rank-metric codes. 
In this paper, we study linear sets having two points of complementary weight, that is with two points for which the sum of their weights equals the rank of the linear set.
As a special case, we study those linear sets having exactly two points of weight greater than one, by showing new examples and studying their equivalence issue.
Also we determine some linearized polynomials defining the linear sets recently introduced by Jena and Van de Voorde (2021).
\end{abstract}

\section{Introduction}

Let $q$ be a prime power, $\fq$ be the finite field of order $q$, and $\fqn$ be the degree $n$ extension of $\fq$.
$\fq$-linear sets in the projective space $\PG(r-1,q^n)$ are a natural generalization of subgeometries and, in recent years, they have been intensively used to construct, to classify and/or to characterize many different geometrical and algebraic objects like blocking sets, two-intersection sets, complete caps, translation spreads of the Cayley Generalized Hexagon, translation ovoids of polar spaces, semifield flocks, finite semifields and linear codes; see \cite{LavVdV,Polverino} and the references therein.

In this paper, we deal with linear sets on the projective line.
More precisely, let $V$ be a $2$-dimensional $\fqn$-vector space and let $\Lambda=\PG(V,\fqn)=\PG(1,q^n)$. 
Let $U$ be an $\fq$-subspace of $V$, then the set
\[ L_U=\{\langle \mathbf{u} \rangle_{\fqn} \colon \mathbf{u}\in U\setminus\{\mathbf{0}\}\} \]
is said to be an $\fq$-\emph{linear set} of rank $\dim_{\fq}(U)$.
The \emph{weight} of a point $P=\langle \mathbf{v}\rangle_{\fqn} \in \Lambda$ in $L_U$ is defined as $\dim_{\fq}(U \cap \langle \mathbf{v}\rangle_{\fqn})$.
One of the most difficult task when studying linear sets is to establish whether or not two linear sets are $\mathrm{P\Gamma L}(2,q^n)$-equivalent.
However, there are some necessary conditions which can help to solve this problem when $\dim_{\fq}(U)=n$.
Indeed, in such a case there exists a $q$-polynomial $f(x) \in \fqn[x]$ such that $L_U$ is mapped by a collineation of $\mathrm{PGL}(2,q^n)$ into
\[ L_f=\{\langle (x,f(x)) \rangle_{\fqn} \colon x \in \fqn^*\}. \]
The aforementioned necessary conditions involve the coefficients of $f(x)$, see e.g.\ \cite[Lemma 3.6]{CsMP}.
Nevertheless, it could be hard to find polynomials describing a fixed linear set.

The main objects of this paper are linear sets whose rank is less than or equal to $n$ and which have two points of complementary weights, that is the sum of the weights of the points equals the rank of the linear set. These linear sets turn out to be described by a projection map of $\fqn$ and this description makes possible to express them by means of a linearized polynomial when the rank of the linear set is $n$, see Section \ref{sec:projection}.
An easy example is given by the linear set defined by the trace function from $\fqn$ to $\fq$, in which there is one point of weight $n-1$ and all the other points have weight one.
This latter example belongs to the family of $i$-clubs, that is $\fq$-linear sets with one point of weight $i$ and all the others of weight one; \cite{DeBoeckVdV2016}.
So, it is natural to ask whether or not $\fq$-linear sets with exactly two points of weight greater than one exist.
As a special case of the study of linear sets with two points of complementary weights, in Section \ref{sec:2weight} we consider those having exactly two points of weight greater than one.
More precisely, we give some restriction on the parameters for their existence and we construct explicitly several examples of these linear sets, investigating the problem of the $\mathrm{\Gamma L}(2,q^n)$-equivalence of the $\fq$-subspaces representing such linear sets and we also determine some $q$-polynomials describing them.

Furthermore, very recently in \cite{DVdV} Jena and Van de Voorde introduced a family of linear sets that has the minimum number of points, generalizing the one introduced in \cite{LunPol2000}.
In \cite[Section 2.5]{DVdV}, they leave the problem of determining $q$-polynomials defining such linear sets. In Theorem \ref{th:polminsize} and Corollary \ref{cor:polminsizespecial} of Section \ref{sec:minsize} we solve this problem.
These polynomials are examples of linearized polynomials such that the set
\[ \mathrm{Im}\left( \frac{f(x)}x\right)=\left\{ \frac{f(x)}x \colon x \in \fqn^* \right\} \]
has the minimum size among linearized polynomials over $\fqn$ with maximum field of linearity $\F_q$, see \cite{Ball,BBBSSz}.
Up to our knowledge, the only previously known polynomial satisfying this property was the trace function.


We conclude the paper by showing in Appendix \ref{sec:pg1q4} the list of the $q$-polynomials defining $\fq$-linear sets in $\PG(1,q^4)$, which was an open problem stated in \cite{CsMP} and in \cite{BMZZ}.

\section{Preliminaries}

In this section we briefly recall some definitions and properties of linearized polynomials, linear sets and dual basis which will play a special role in our results.

\subsection{Linearized polynomials}

Let $q=p^h$, where $h$ is a positive integer and $p$ is a prime.
A $\sigma$-\emph{linearized polynomial} (or a $\sigma$-\emph{polynomial}, or $q$-\emph{polynomial} if $\sigma $ is the map $x\in \fqn\mapsto x^q\in \fqn$) over $\F_{q^n}$ is a polynomial of the form
\[f(x)=\sum_{i=0}^t a_i x^{\sigma^i},\]
where $a_i\in \F_{q^n}$, $t$ is a non-negative  integer and $\sigma$ is a generator of the Galois group $\mathrm{Gal}(\F_{q^n}\colon \F_q)$.
Furthermore, if $a_t \neq 0$ we say that $t$ is the $\sigma$-\emph{degree} of $f(x)$.
We will denote by $\mathcal{L}_{n,\sigma}$ the set of all $\sigma$-polynomials over $\F_{q^n}$ (or simply by $\mathcal{L}_{n,q}$ if $\sigma\colon x \in \F_{q^n}\mapsto x^q\in \F_{q^n}$).

\cite[Theorem 5]{GQ2009} and \cite[Theorem 10]{GQ2009x} imply the following bound on the number of roots and the following necessary condition to have the maximum number of roots over $\fqn$ of a $\sigma$-polynomial. 

\begin{theorem}\label{Gow}
Consider
\[f(x)=a_0x+a_1x^{\sigma}+\cdots+a_{k-1}x^{\sigma^{k-1}}+a_kx^{\sigma^k}\in \mathcal{L}_{n,\sigma},\]
with $k\leq n-1$ and let $a_0,a_1,\ldots,a_k$ be elements of $\F_{q^n}$ not all of them zero. Then 
\[ \dim_{\F_q}(\ker (f(x)))\leq k. \]
Moreover, if $\dim_{\F_q}(\ker (f(x)))=k$ then $\N_{q^n/q}(a_0)=(-1)^{nk}\N_{q^n/q}(a_k)$.
\end{theorem}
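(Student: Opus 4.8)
The plan is to deduce both assertions from a single tool, the non\-vanishing of the \emph{generalized Moore determinant}: if $w_1,\dots,w_m\in\fqn$ are $\F_q$\-linearly independent with $m\le n$ and $\sigma$ generates $\Gal(\fqn\colon\F_q)$, then the matrix $\left(w_j^{\sigma^i}\right)_{0\le i\le m-1,\,1\le j\le m}$ is nonsingular. (This is the $\sigma$\-analogue of the classical Moore determinant; one direction is immediate, since an $\F_q$\-dependence among the $w_j$ is fixed by $\sigma$ and hence yields a column dependence, and the converse is standard.) Using this, I would first prove the bound. Set $W=\ker f$, an $\F_q$\-subspace, and $m=\dim_{\F_q}W$; suppose for contradiction $m\ge k+1$. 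Pick $\F_q$\-independent $w_1,\dots,w_{k+1}\in W$ and form the $(k+1)\times(k+1)$ matrix $C=\left(w_j^{\sigma^i}\right)_{0\le i\le k,\,1\le j\le k+1}$. Since each $w_j$ is a root, $\sum_{i=0}^k a_i w_j^{\sigma^i}=f(w_j)=0$ for every $j$, so the nonzero row vector $(a_0,\dots,a_k)$ lies in the left kernel of $C$ and $\det C=0$; as $k+1\le n$ and the $w_j$ are independent, this contradicts the Moore determinant. Hence $m\le k$.

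For the norm statement, assume $\dim_{\F_q}W=k$ and fix an $\F_q$\-basis $w_1,\dots,w_k$ of $W$. The relations $f(w_j)=0$ read $B\,(a_0,\dots,a_k)^{\mathsf T}=0$ with $B=\left(w_j^{\sigma^i}\right)_{1\le j\le k,\,0\le i\le k}$ of size $k\times(k+1)$. Deleting its last column gives the Moore determinant $\Delta=\det\left(w_j^{\sigma^i}\right)_{1\le j\le k,\,0\le i\le k-1}\ne 0$, so $\rk B=k$ and the right kernel of $B$ is one\-dimensional, spanned by the vector of signed maximal minors. As $(a_0,\dots,a_k)$ is a nonzero kernel vector, there is $\lambda\in\fqn^{\ast}$ with $a_i=\lambda(-1)^iM_i$, where $M_i$ is the minor of $B$ with column $i$ removed; in particular $a_0=\lambda M_0$ and $a_k=\lambda(-1)^kM_k$ with $M_k=\Delta$. (Since $\lambda,\Delta\ne 0$, this already forces $a_0,a_k\ne 0$, consistent with $a_k\ne 0$ being forced by maximality of the kernel via the first part.)

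The key observation is that the matrix defining $M_0$ (columns $1,\dots,k$) is obtained from the one defining $\Delta$ (columns $0,\dots,k-1$) by applying $\sigma$ entrywise, because $w_j^{\sigma^i}=\bigl(w_j^{\sigma^{i-1}}\bigr)^{\sigma}$; hence $M_0=\Delta^{\sigma}$. Writing $\N=\N_{q^n/q}$ and using that the norm is $\sigma$\-invariant (as $\{\sigma^i\}_{i=0}^{n-1}$ runs over the whole Galois group) gives $\N(M_0)=\N(\Delta^\sigma)=\N(\Delta)=\N(M_k)$. Therefore
\[ \frac{\N(a_0)}{\N(a_k)}=\frac{\N(\lambda)\,\N(M_0)}{\N(\lambda)\,\N((-1)^k)\,\N(M_k)}=\frac{1}{(-1)^{nk}}=(-1)^{nk}, \]
where $\N((-1)^k)=\bigl((-1)^k\bigr)^{n}=(-1)^{nk}$ and the last equality uses $(-1)^{nk}\in\{\pm 1\}$. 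This is exactly $\N(a_0)=(-1)^{nk}\N(a_k)$.

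The main obstacle is the norm identity rather than the bound. For a general generator $\sigma\colon x\mapsto x^{q^s}$ the ordinary degree of $f$ is $q^{sk}$, which is strictly larger than $q^k$, so one cannot argue by splitting $f$ into linear factors over $\fqn$ as in the Frobenius case; the count of roots alone is too weak. The decisive points are therefore, first, that the Moore non\-vanishing must be invoked in the form valid for an \emph{arbitrary} generator of the Galois group, and second, the recognition that the two extreme minors $M_0$ and $M_k$ are $\sigma$\-conjugate, so that passing to the norm makes their contributions cancel and produces the factor $(-1)^{nk}$ uniformly in $\sigma$.
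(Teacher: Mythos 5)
Your proof is correct. Note first that the paper does not actually prove this statement: it is quoted as a combination of \cite[Theorem 5]{GQ2009} and \cite[Theorem 10]{GQ2009x}, so there is no in-paper argument to compare against. Your argument is essentially the standard one underlying those references: both assertions are reduced to the nonsingularity of the generalized ($\sigma$-)Moore matrix $\bigl(w_j^{\sigma^i}\bigr)$ for $\F_q$-independent $w_j$, the bound by exhibiting $(a_0,\dots,a_k)$ in the left kernel of a $(k+1)\times(k+1)$ such matrix, and the norm identity by identifying $(a_0,\dots,a_k)$ up to a scalar with the vector of signed maximal minors of the $k\times(k+1)$ relation matrix and observing that the two extreme minors satisfy $M_0=\Delta^{\sigma}$, hence have equal norm; the bookkeeping $\N_{q^n/q}((-1)^k)=(-1)^{nk}$ and the verification that $a_0,a_k\neq 0$ are all in order. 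The one caveat is that the load-bearing ingredient --- nonsingularity of the Moore matrix for an \emph{arbitrary} generator $\sigma$ of $\Gal(\F_{q^n}\colon\F_q)$, not just the Frobenius --- is itself logically equivalent in strength to the first assertion of the theorem, and you invoke it as ``standard'' rather than proving it; that is acceptable (it is indeed classical, e.g.\ via the theory of Ore polynomials or by induction on $m$), but it means your write-up is a reduction to that fact rather than a from-scratch proof. You are also right to flag that the naive root-counting argument (degree $q^k$ bounds the number of roots) fails for general $\sigma\colon x\mapsto x^{q^s}$, which is precisely why the Moore-matrix route is the correct one.
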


Recall that the trace function from $\fqn$ over $\fq$ is defined by:
\[
\mathrm{Tr}_{q^n/q}: a \in \F_{q^n} \mapsto \sum_{i=0}^{n-1} a^{q^i} \in \F_q,
\]
and it is a linear map that defines a nondegenerate symmetric bilinear form as follows:
\[
(a,b) \in \F_{q^n} \times \F_{q^n} \mapsto \mathrm{Tr}_{q^n/q}(ab) \in \F_q.
\]
The \emph{adjoint} of a $\sigma$-polynomial $f(x)=f_0x+f_1x^{\sigma}+\ldots+f_{n-1}x^{\sigma^{n-1}}$ with respect to the trace bilinear form is 
$$f^\top(x)=\sum_{i=0}^{n-1}{\sigma^{n-i}}(f_i)x^{\sigma^{n-i}}.$$
Such a $\sigma$-polynomial satisfies 
$$\mathrm{Tr}_{q^n/q}(f(\alpha)\beta)=\mathrm{Tr}_{q^n/q}(\alpha{f}^\top(\beta)), \qquad \mbox{ for every } \alpha,\beta \in \fqn.$$

Sheekey in \cite{John} gave the following definition.  Let $f(x) \in \mathcal{L}_{n,q}$. Then $f$ is said to be \emph{scattered} if for every $x,y \in \fqn^*$
\[ \frac{f(x)}{x}=\frac{f(y)}{y}\,\, \Leftrightarrow\,\, \frac{y}x\in \fq, \]
or equivalently
\[ \dim_{\fq}(\ker(f(x)-mx))\leq 1, \,\,\,\text{for every}\,\,\, m \in \fqn. \]
As we will see later, the term \emph{scattered} arises form a geometric framework; see \cite{BL2000}. Scattered polynomials have attracted a lot of attention because of their connections with maximum distance rank metric codes; see e.g. \cite{BM,BZ,BZZ2021,FM}.

\subsection{Linear sets}

Let $V$ be a $r$-dimensional vector space over $\fqn$ and let $\Lambda=\PG(V,\F_{q^n})=\PG(r-1,q^n)$.
Let $U$ be an $\fq$-subspace of $V$ of dimension $k$, then the set of points
\[ L_U=\{\la {\bf u} \ra_{\mathbb{F}_{q^n}} : {\bf u}\in U\setminus \{{\bf 0} \}\}\subseteq \Lambda \]
is said to be an $\fq$-\emph{linear set of rank $k$}.
Let $P=\langle \mathbf{v}\rangle_{\fqn}$ be a point in $\Lambda$. The \emph{weight of $P$ in $L_U$} is defined as 
\[ w_{L_U}(P)=\dim_{\fq}(U\cap \langle \mathbf{v}\rangle_{\fqn}). \]
If $N_i$ denotes the number of points of $\Lambda$ having weight $i\in \{0,\ldots,k\}$  in $L_U$, the following relations hold:
\begin{equation}\label{eq:card}
    |L_U| \leq \frac{q^k-1}{q-1},
\end{equation}
\begin{equation}\label{eq:pesicard}
    |L_U| =N_1+\ldots+N_k,
\end{equation}
\begin{equation}\label{eq:pesivett}
    N_1+N_2(q+1)+\ldots+N_k(q^{k-1}+\ldots+q+1)=q^{k-1}+\ldots+q+1.
\end{equation}

It is easy to see that the weight distribution of two linear sets defined by two $\Gamma\mathrm{L}(r,q^n)$-equivalent $\fq$-subspaces is the same.

\begin{proposition}\label{prop:weightinvariace}
Let $U_1$ and $U_2$ be two $k$-dimensional $\Gamma\mathrm{L}(r,q^n)$-equivalent $\fq$-subspaces in $V$. For every $i \in \{0,1,\ldots,k\}$, the number of points of weight $i$ in $L_{U_1}$ coincides with the number of points of weight $i$ in $L_{U_2}$.
\end{proposition}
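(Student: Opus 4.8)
The plan is to show that the semilinear bijection realizing the $\Gamma\mathrm{L}(r,q^n)$-equivalence of $U_1$ and $U_2$ descends to a weight-preserving bijection of the points of $\Lambda$; the statement then follows at once by counting. First I would fix an element $\varphi \in \Gamma\mathrm{L}(r,q^n)$ with companion automorphism $\sigma \in \Aut(\fqn)$ such that $\varphi(U_1)=U_2$, so that $\varphi$ is an additive bijection of $V$ satisfying $\varphi(\lambda \mathbf{v})=\sigma(\lambda)\varphi(\mathbf{v})$ for all $\lambda\in\fqn$ and $\mathbf{v}\in V$. Since $\fq$ is the unique subfield of $\fqn$ of order $q$ and every field automorphism permutes the subfields, one has $\sigma(\fq)=\fq$; consequently $\varphi$ sends $\fq$-subspaces to $\fq$-subspaces. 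From $\varphi(\lambda\mathbf{v})=\sigma(\lambda)\varphi(\mathbf{v})$ one gets $\varphi(\la\mathbf{v}\ra_{\fqn})=\la\varphi(\mathbf{v})\ra_{\fqn}$, so $\varphi$ induces a well-defined bijection $\hat\varphi$ of the point set of $\Lambda$ by $\la\mathbf{v}\ra_{\fqn}\mapsto\la\varphi(\mathbf{v})\ra_{\fqn}$.

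The key step is to prove that $\hat\varphi$ preserves weights, that is $w_{L_{U_1}}(P)=w_{L_{U_2}}(\hat\varphi(P))$ for every point $P=\la\mathbf{v}\ra_{\fqn}$. Since $\varphi$ is a bijection, it commutes with intersection, so
\[ \varphi(U_1\cap\la\mathbf{v}\ra_{\fqn})=\varphi(U_1)\cap\varphi(\la\mathbf{v}\ra_{\fqn})=U_2\cap\la\varphi(\mathbf{v})\ra_{\fqn}. \]
It then remains to check that $\varphi$ restricts to an $\fq$-dimension preserving map on the $\fq$-subspace $U_1\cap\la\mathbf{v}\ra_{\fqn}$. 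This is the one point that requires a small argument, because $\sigma$ need not fix $\fq$ pointwise, so $\varphi$ is only $\fq$-semilinear there. I would argue directly: if $\mathbf{w}_1,\ldots,\mathbf{w}_d$ is an $\fq$-basis of $U_1\cap\la\mathbf{v}\ra_{\fqn}$, then $\varphi(\mathbf{w}_1),\ldots,\varphi(\mathbf{w}_d)$ span $\varphi(U_1\cap\la\mathbf{v}\ra_{\fqn})$ over $\fq$ (using $\sigma(\fq)=\fq$) and remain $\fq$-linearly independent (using injectivity of $\varphi$ together with $\sigma(\fq)=\fq$). Hence the two $\fq$-subspaces have the same dimension, which gives $w_{L_{U_1}}(P)=w_{L_{U_2}}(\hat\varphi(P))$.

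Finally, since $\hat\varphi$ is a weight-preserving bijection of the point set of $\Lambda$, for each $i\in\{0,1,\ldots,k\}$ it restricts to a bijection between the points of weight $i$ in $L_{U_1}$ and the points of weight $i$ in $L_{U_2}$; counting both sides yields the claimed equality. I expect the only genuinely delicate point to be the dimension preservation for a semilinear (rather than linear) bijection, since the companion automorphism may act nontrivially on $\fq$; everything else is a routine unwinding of the definitions of weight and of $\Gamma\mathrm{L}$-equivalence.
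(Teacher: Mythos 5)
Your proof is correct and complete; the one genuinely non-trivial point (that an $\fq$-semilinear bijection whose companion automorphism merely stabilizes, rather than fixes, $\fq$ still preserves $\fq$-dimension) is handled properly via $\sigma(\fq)=\fq$. The paper itself offers no proof of this proposition, stating only that it ``is easy to see''; your argument is exactly the standard unwinding the authors have in mind, so there is nothing to compare beyond noting that you have supplied the details they omit.
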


Furthermore, $L_U$ is called \emph{scattered} if it has the maximum number $\frac{q^k-1}{q-1}$ of points, or equivalently, if all points of $L_U$ have weight one.
Blokhuis and Lavrauw in \cite{BL2000} proved that the rank of a scattered linear set is bounded by $\frac{rn}2$ and, after a series of papers \cite{BBL2000,BGMP2015,CsMPZu,CSMPZ2016}, this bound is proven to be tight when $rn$ is even.

\begin{theorem}\cite{BBL2000,BGMP2015,CsMPZu,CSMPZ2016}\label{th:existscattered}
For each positive integers $r$ and $n$ such that $rn$ is even, for each value of $q$, there exists a scattered $\fq$-linear set in $\mathrm{PG}(r-1,q^n)$ of rank $\frac{rn}2$. 
\end{theorem}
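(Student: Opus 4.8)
The statement is the sharpness companion to the Blokhuis--Lavrauw inequality recalled just above (valid for $r\ge 2$), so the plan is purely constructive: for every admissible $(r,n,q)$ with $rn$ even I want to exhibit an $\fq$-subspace $U\le\fqn^r$ with $\dim_{\fq}(U)=rn/2$ that is scattered, i.e.\ $\dim_{\fq}\bigl(U\cap\langle\mathbf v\rangle_{\fqn}\bigr)\le 1$ for all $\mathbf v\ne\mathbf 0$. The engine of the reduction is the following elementary \emph{gluing lemma}: if $U_1\le\fqn^{r_1}$ and $U_2\le\fqn^{r_2}$ are scattered, then their external direct sum $U_1\oplus U_2\le\fqn^{r_1+r_2}$ is scattered as well. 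Indeed, for $\mathbf v=(\mathbf v',\mathbf v'')$ the scalars $\lambda$ with $\lambda\mathbf v\in U_1\oplus U_2$ are exactly those with $\lambda\mathbf v'\in U_1$ \emph{and} $\lambda\mathbf v''\in U_2$; each of the two conditions cuts out an $\fq$-space whose dimension equals the weight of the corresponding projection (hence $\le 1$ when that projection is nonzero, and all of $\fqn$ when it vanishes), so their intersection is at most $1$-dimensional. Since dimensions add under $\oplus$, scattered blocks can be concatenated freely.

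The base block is $r=2$, where $rn=2n$ is automatically even. Here I take $U=\{(x,x^{q})\colon x\in\fqn\}$, the graph of the $q$-polynomial $x^q$. By Sheekey's criterion \cite{John} recalled above, $x^q$ is scattered because $x^q/x=y^q/y$ forces $(x/y)^{q-1}=1$, i.e.\ $x/y\in\fq$; equivalently $\dim_{\fq}\ker(x^q-mx)\le 1$ for every $m$, since $x^{q-1}=m$ has at most $\gcd(q-1,q^n-1)=q-1$ nonzero roots. Thus $U$ is scattered of rank $n=rn/2$ in $\PG(1,q^n)$. Gluing copies of this block already settles every case with $r$ even, for arbitrary $n$: concatenating $r/2$ of them yields a scattered subspace of dimension $(r/2)n=rn/2$. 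In particular the whole odd-$n$ regime is covered, since $rn$ even with $n$ odd forces $r$ even.

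It remains to treat $n$ even with $r$ odd. Writing $r=3+(r-3)$ and gluing $(r-3)/2$ of the $r=2$ blocks, the problem collapses to a single residual base case: a scattered $\fq$-subspace of $\fqn^{3}$ of dimension $3n/2$ for $n$ even. (When $n=2$ this too is trivial, since a $1$-dimensional block $\fq\cdot 1\le\fqn^1$ together with one $r=2$ block does the job; the genuine difficulty is $n\ge 4$.) This is exactly where the gluing machinery breaks down, and it is the step I expect to be hard: the ``missing'' $n/2$ dimensions cannot be supplied by a separate scattered block, because a single $\fqn$-coordinate can host a scattered subspace of dimension at most $1$. One is therefore forced to \emph{entangle} the three coordinates, seeking $U$ as the graph of an $\fq$-linearized map defined on a $\tfrac{3n}{2}$-dimensional subspace of $\fqn^{2}$, with coefficients tuned so that, for every $m$, the relevant kernel stays $\le 1$-dimensional while the total dimension is exactly $3n/2$.

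Here Theorem~\ref{Gow} is the natural instrument: it bounds the kernel dimensions by the $\sigma$-degree and, through the norm condition $\N_{q^n/q}(a_0)=(-1)^{nk}\N_{q^n/q}(a_k)$, indicates which coefficient patterns can realise the extremal dimension. The main obstacle is to make such a choice work \emph{uniformly in $q$}: for $q$ large a generic counting argument should produce admissible coefficients creating no heavy point, but the small-$q$ and even-characteristic cases require explicit polynomials and a hands-on verification of the kernel bound. This is precisely the part that was settled only through the sequence \cite{BBL2000,BGMP2015,CsMPZu,CSMPZ2016}, rather than in a single elementary stroke.
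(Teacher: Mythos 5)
Your gluing lemma is correct, and so is the bookkeeping built on it: the graph of $x^q$ is a scattered $\fq$-subspace of rank $n$ in $\fqn^2$, direct sums of scattered subspaces are scattered, and this settles every case with $r$ even (hence every case with $n$ odd, since $rn$ even and $n$ odd force $r$ even) as well as $n=2$. This reduction to the single residual case $r$ odd, $n$ even, $n\geq 4$ (equivalently, after peeling off $(r-3)/2$ rank-$n$ blocks, to $r=3$) is exactly the standard skeleton used in the literature. Note, for what it is worth, that the paper itself does not prove Theorem~\ref{th:existscattered} at all: it is imported wholesale from the four cited references, so there is no internal proof to match your argument against.

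The genuine gap is that the residual case is the entire substance of the theorem, and you do not prove it. Everything you establish follows in a few lines from the $r=2$ example; the assertion that a scattered $\fq$-subspace of $\fqn^3$ of dimension $3n/2$ exists for every even $n\geq 4$ and every $q$ is precisely what took the sequence \cite{BBL2000,BGMP2015,CsMPZu,CSMPZ2016} to settle (first $n=4$, then $n\equiv 0\pmod 4$, then the remaining residues, with separate care for small $q$). Your paragraph on this case correctly diagnoses why the gluing machinery cannot supply the missing $n/2$ dimensions and correctly points to Theorem~\ref{Gow} as the natural tool for bounding weights, but it contains no construction and no argument that one exists: ``a generic counting argument should produce admissible coefficients'' is a research programme, not a proof, and the uniformity in $q$ that you flag as the obstacle is exactly where such counting arguments break down. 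As written, the proposal proves a strictly weaker statement (existence for $rn$ even with $r$ even, plus the trivial $n=2$ case) and defers the hard half to the same references the paper cites; it therefore cannot stand as a self-contained proof of the theorem.
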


When $rn$ is odd less constructions are known, see $L_{W}$ in Example \ref{ex:scattered} and \cite[Section 5]{BCsMT}.

\begin{example}\label{ex:scattered}
Let $V=\F_{q^t}^\ell$. Suppose that $\ell$ is odd and $\ell=2h +1$, then consider
\[ W=\{ (x_1,x_1^{q^{s_1}},x_2,x_2^{q^{s_2}},\ldots,x_h,x_h^{q^{s_h}},\alpha) \colon x_i \in \F_{q^t}, \alpha \in \fq \}, \]
with $\gcd(s_1,\ldots,s_h,t)=1$.
Again, $L_{W}$ is a scattered $\fq$-linear set in $\PG(\ell-1,q^t)$ of rank $\frac{(\ell-1) t}2+1$.
\end{example}

We also underline that if $L_{U}$ is a scattered $\fq$-linear set and $W$ is an $\fq$-subspace of $U$, then also $L_W$ is scattered.

If two $\fq$-subspaces $U_1$ and $U_2$ of $V$ belong to the same orbit of $\Gamma\mathrm{L}(r,q^n)$, then $L_{U_1}$ and $L_{U_2}$ are clearly $\mathrm{P}\Gamma \mathrm{L}(r,q^n)$-equivalent.
This sufficient condition is not necessary in general: there exist $\fq$-linear sets $L_{U_1}$ and $L_{U_2}$ in the same orbit of ${\rm P\Gamma L}(r,q^n)$ such that $U_1$ and $U_2$ are not in the same ${\rm \Gamma L}(r,q^n)$-orbit.
The $\fq$-linear set $L_U$ is called \emph{simple} if every $\fq$-subspace $W$ of $V$ satisfying $L_U=L_W$ is in the same $\Gamma\mathrm{L}(r,q^n)$-orbit as $U$. 
It is easy to see that simple linear sets $L_{U_1}$ and $L_{U_2}$ are in the same $\mathrm{P\Gamma L}(r,q^n)$-orbit if and only if $U_1$ and $U_2$ are $\mathrm{\Gamma L}(r,q^n)$-equivalent.
For further details on the equivalence issue for linear sets we refer to \cite{CsMP}.

Some of the linear sets considered in this paper are $\fq$-linear sets $L$ of rank $n$ in $\PG(1,q^n)$.
Since ${\rm P\Gamma L}(1,q^n)$ is $3$-transitive on $\PG(1,q^n)$, we can assume up to equivalence that $L$ does not contain the point $\langle(0,1)\rangle_{\fqn}$.
This implies that $L$ equals $L_f=L_{U_f}$, where
\[U_{f}=\{(x,f(x)) \colon x \in \fqn\}\] 
for some $q$-polynomial $f(x)\in\mathcal{L}_{n,q}$.
Note that the polynomial $f(x)$ is scattered if and only if $L_f$ is a scattered $\fq$-linear set.

Now, we briefly recall the notion of the dual of a linear set.
Let $\sigma \colon V \times V \rightarrow \fqn$ be a nondegenerate reflexive sesquilinear form on the $\fqn$-vector space $V$ and consider $\sigma'=\mathrm{Tr}_{q^n/q} \circ \sigma$, which is a nondegenerate reflexive sesquilinear form on $V$ seen as an $\fq$-vector space of dimension $2n$. Then we may consider $\perp$ and $\perp'$ as the orthogonal complement maps defined by $\sigma$ and $\sigma'$, respectively, and $\tau$ and $\tau'$ as the polarities of $\PG(V,\fqn)$ and $\PG(V,\fq)$, respectively. Let $L_U$ be an $\fq$-linear set in $\PG(V,\fqn)$ of rank $t$ then $L_U^\tau=L_{U^{\perp'}}$ is an $\fq$-linear set in $\PG(V,\fqn)$ of rank $2n-t$ that is called the \emph{dual linear set of $L_U$} with respect to the polarity $\tau$.
As proved in \cite[Proposition 2.5]{Polverino}, if $\tau_1$ and $\tau_2$ are two polarities as above, that the dual linear sets $L_U^{\tau_1}$ and $L_U^{\tau_2}$ are $\mathrm{P\Gamma L}(2,q^n)$-equivalent.
Therefore the definition of the dual linear set does not depend on the choice of the polarity.
So, we may choose $V=\F_{q^{n}}\times \fqn$ and $\sigma \colon V\times V \rightarrow \fqn$ such that
\[ \sigma((x,y),(u,v))=xv-yu. \]
Let $\sigma'=\mathrm{Tr}_{q^n/q} \circ \sigma$ and denote by $\tau$ the polarity associated with $\sigma$.
If $L_f$ is an $\fq$-linear set in $\PG(1,q^n)$ of rank $n$ for some $f(x) \in \mathcal{L}_{n,q}$, then $L_f^\tau$ the dual of $L_f$ is $L_{f^\top}$; see e.g. \cite[Lemma 2.6]{BGMP2015}.

We refer to \cite{LavVdV} and \cite{Polverino} for comprehensive references on linear sets.

\subsection{Dual basis and the projection map}

Two ordered $\F_{q}$-bases $\mathcal{B}=(\xi_0,\ldots,\xi_{n-1})$ and $\mathcal{B}^*=(\xi_0^*,\ldots,\xi_{n-1}^*)$ of $\F_{q^n}$ are said to be \emph{dual bases} if $\mathrm{Tr}_{q^n/q}(\xi_i \xi_j^*)= \delta_{ij}$, for $i,j\in\{0,\ldots,n-1\}$, where $\delta_{ij}$ is the Kronecker symbol. It is well known that for any $\F_q$-basis $\mathcal{B}=(\xi_0,\ldots,\xi_{n-1})$ there exists a unique dual basis $\mathcal{B}^*=(\xi_0^*,\ldots,\xi_{n-1}^*)$ of $\mathcal{B}$, see e.g.\ \cite[Definition 2.30]{lidl_finite_1997}. 

In next result it is shown a procedure that can be performed to determine the dual basis of a fixed one.

\begin{proposition}\cite[Remark 4.10]{wl} \label{prop:dualbasesgeneral}
Let $\mathcal{B}=(\xi_0,\ldots,\xi_{n-1})$ be an $\F_q$-basis of $\F_{q^n}$. Let 
\[
V=(\xi_j^{q^i})=
\left(
\begin{matrix}
\xi_0 & \xi_1 & \cdots & \xi_{n-1} \\
\xi_0^q & \xi_1^q & \cdots & \xi_{n-1}^q \\
\vdots & \vdots & \ddots & \vdots \\
\xi_0^{q^{n-1}} & \xi_1^{q^{n-1}} & \cdots & \xi_{n-1}^{q^{n-1}} \\
\end{matrix}
\right).
\]
Let $\tilde \xi_i$ be the $(0,i)$-cofactor of the matrix $V$, for $i\in\{0,\ldots,n-1\}$. Let $\mathcal{B}^*=( \xi_0^*,\ldots,\xi_{n-1}^*) $ be the dual basis of $\mathcal{B}$. Then 
\[
\xi_i^*= \frac{\tilde \xi_i}{\det(V)}, \ \ \ \mbox{for }i\in\{0,\ldots,n-1\}.
\]
\end{proposition}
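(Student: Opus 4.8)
The plan is to verify directly that the proposed dual basis vectors $\xi_i^* = \tilde\xi_i/\det(V)$ satisfy the defining trace-orthogonality relations $\mathrm{Tr}_{q^n/q}(\xi_j\xi_i^*) = \delta_{ij}$, since uniqueness of the dual basis is already guaranteed. First I would expand the trace as $\mathrm{Tr}_{q^n/q}(\xi_j\xi_i^*) = \sum_{r=0}^{n-1}(\xi_j\xi_i^*)^{q^r} = \frac{1}{\det(V)}\sum_{r=0}^{n-1}\xi_j^{q^r}\,\tilde\xi_i^{q^r}$, using that $\det(V)\in\fq$ (which itself follows because $\det(V)^q = \det(V^q)=\det(V)$, as raising the Moore-type matrix $V$ to the $q$-th power permutes its rows cyclically, contributing a sign that is absorbed, so $\det(V)$ is fixed by Frobenius). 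The cofactor $\tilde\xi_i$ is the $(0,i)$-cofactor, so $\tilde\xi_i^{q^r}$ is, up to the appropriate sign, the $(r,i)$-cofactor of $V$, because applying $q^r$ to every entry of the minor obtained by deleting row $0$ and column $i$ yields the minor obtained by deleting row $r$ and column $i$ of the Frobenius-permuted matrix.

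The heart of the argument is then a cofactor-expansion (Laplace expansion) identity. I would recognize $\sum_{r=0}^{n-1}\xi_j^{q^r}\,C_{r,i}$, where $C_{r,i}$ denotes the $(r,i)$-cofactor of $V$, as the Laplace expansion along column $i$ of the determinant of the matrix obtained from $V$ by replacing its column $i$ with the column $(\xi_j,\xi_j^q,\ldots,\xi_j^{q^{n-1}})^{\top}$. When $j=i$ this replacement leaves $V$ unchanged, so the expansion equals $\det(V)$, giving $\mathrm{Tr}_{q^n/q}(\xi_i\xi_i^*) = \det(V)/\det(V) = 1$. When $j\neq i$, the modified matrix has two identical columns (column $j$ and the substituted column $i$ both equal to the Frobenius-orbit vector of $\xi_j$), so its determinant vanishes, giving $\mathrm{Tr}_{q^n/q}(\xi_j\xi_i^*)=0$. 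This is exactly $\delta_{ij}$, which completes the verification.

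The main obstacle I anticipate is bookkeeping the Frobenius action on the cofactors and ensuring the signs match the Laplace expansion conventions. Specifically, one must check carefully that $\tilde\xi_i^{q^r}$, the $q^r$-power of the $(0,i)$-cofactor, coincides with the $(r,i)$-cofactor $C_{r,i}$ of $V$ itself (not of some permuted matrix), which requires tracking how the cyclic row shift induced by Frobenius interacts with the $(-1)^{\text{row}+\text{col}}$ sign and with the cyclic permutation of the remaining rows in the minor. Once the identification $\tilde\xi_i^{q^r} = C_{r,i}$ is established (the sign contributions from the Frobenius permutation cancel precisely against the reindexing of the minor), the rest is the standard fact that a cofactor sum against an arbitrary column computes the determinant of the column-substituted matrix, and that a determinant with a repeated column is zero.

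Alternatively, and perhaps more cleanly, I would note that the whole statement is a single instance of Cramer's rule: the dual basis coordinates are by definition the solution of the linear system $\sum_i \mathrm{Tr}_{q^n/q}(\xi_j\xi_i^*) = \delta_{ij}$, but it is more transparent to observe that the matrix $V^{\top}V$ has $(j,i)$-entry $\sum_{r}\xi_j^{q^r}\xi_i^{q^r}$, and the dual basis is characterized by the Gram matrix $G$ with entries $\mathrm{Tr}_{q^n/q}(\xi_j\xi_i)$ being inverted. Since $G = V^{\top}V$ with $V$ the Moore matrix above, one has $G^{-1} = V^{-1}(V^{\top})^{-1}$, and reading off entries of $V^{-1}$ via the classical adjugate formula $V^{-1} = \mathrm{adj}(V)/\det(V)$ produces exactly the cofactor expression in the statement; this reduces everything to the adjugate identity $V\cdot\mathrm{adj}(V) = \det(V)\,I$. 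I would present the first approach as primary since it is self-contained, but I expect the adjugate viewpoint to clarify why the cofactors of a single Moore matrix suffice.
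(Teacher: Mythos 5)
The paper offers no proof of this proposition; it is quoted verbatim from \cite[Remark 4.10]{wl}, so there is nothing internal to compare against. Your strategy (verify the trace-orthogonality relations via Laplace expansion / the adjugate identity) is the standard and correct one, and your conclusion is right, but two of your intermediate assertions are false as stated and happen to cancel. First, $\det(V)$ is \emph{not} in general fixed by Frobenius: applying $x\mapsto x^q$ entrywise shifts the rows cyclically, so $\det(V)^q=(-1)^{n-1}\det(V)$; already for $n=2$ one has $\det(V)^q=-\det(V)$, so $\det(V)\notin\F_q$ whenever $n$ is even and $q$ is odd. Second, the identification $\tilde\xi_i^{\,q^r}=C_{r,i}$ fails by exactly the same factor: carrying out the bookkeeping you anticipated, the $q^r$-power of the $(0,i)$-cofactor equals $(-1)^{r(n-1)}C_{r,i}$ (the sign $(-1)^{r(n-2)}$ from reordering the $n-1$ rows of the minor combines with $(-1)^r$ from the change of row index in the cofactor sign). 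Since the same sign $(-1)^{r(n-1)}$ appears in $\det(V)^{q^r}$, the ratio behaves as you need, $(\xi_i^*)^{q^r}=\tilde\xi_i^{\,q^r}/\det(V)^{q^r}=C_{r,i}/\det(V)$, and the column-substitution argument then gives $\mathrm{Tr}_{q^n/q}(\xi_j\xi_i^*)=\delta_{ij}$. So the proof is salvageable, but as written it rests on two incorrect claims rather than on their correct sign-decorated versions.

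The cleanest repair is a variant of your alternative route that needs no sign-chasing at all: the defining relations $\mathrm{Tr}_{q^n/q}(\xi_i^*\xi_j)=\sum_r(\xi_i^*)^{q^r}\xi_j^{q^r}=\delta_{ij}$ say precisely that the matrix $W$ with entries $W_{ir}=(\xi_i^*)^{q^r}$ satisfies $WV=I$, hence $W=V^{-1}=\mathrm{adj}(V)/\det(V)$; reading off the $r=0$ column gives $\xi_i^*=W_{i0}=C_{0,i}/\det(V)=\tilde\xi_i/\det(V)$ directly, with no need to track how Frobenius acts on cofactors.
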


When considering polynomial basis we have the following results.

\begin{lemma}\cite[Exercise 2.40]{lidl_finite_1997}\label{lem:dualbasispol}
Let $\lambda \in \fqn$ such that $\mathcal{B}=(1,\lambda,\ldots,\lambda^{n-1})$ is an ordered $\fq$-basis.
Let $f(x)=a_0+a_1x+\ldots+a_{n-1}x^{n-1}+x^n$ be the minimal polynomial of $\lambda$ over $\fq$. Let 
\[
\frac{f(x)}{x-\lambda}=b_0+b_1x+\cdots+b_{n-1}x^{n-1} \mbox{ and } \delta=f'(\lambda).
\]
Then the dual basis $\mathcal{B}^*$ of $\mathcal{B}$ is 
\[ \mathcal{B}^*=(\delta^{-1}b_0,\delta^{-1}b_1,\ldots,\delta^{-1}b_{n-1}). \]
\end{lemma}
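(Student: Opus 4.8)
The plan is to verify the duality relations $\mathrm{Tr}_{q^n/q}(\lambda^i \cdot \delta^{-1} b_j) = \delta_{ij}$ directly, by exploiting a Lagrange interpolation identity evaluated at the conjugates of $\lambda$. Since $f$ is the minimal polynomial of $\lambda$ over $\fq$ and has degree $n$, it is irreducible and separable, so its roots are precisely the $n$ distinct conjugates $\lambda_k := \lambda^{q^k}$, $k = 0, \ldots, n-1$, and $\delta = f'(\lambda) \neq 0$. First I would record two consequences of the fact that $f \in \fq[x]$: applying the Frobenius $\alpha \mapsto \alpha^{q^k}$ coefficientwise to the defining relation $f(x) = (x-\lambda)\sum_{j=0}^{n-1} b_j x^j$ and using that $f$ is fixed gives $f(x)/(x - \lambda_k) = \sum_{j=0}^{n-1} b_j^{q^k} x^j$; and similarly $f'(\lambda_k) = \delta^{q^k}$, because $f' \in \fq[x]$ as well.

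The key step is the interpolation identity
\[
\sum_{k=0}^{n-1} \frac{\lambda_k^{\,i}}{f'(\lambda_k)} \cdot \frac{f(x)}{x - \lambda_k} = x^i, \qquad 0 \le i \le n-1.
\]
To prove it I would observe that both sides are polynomials of degree at most $n-1$, and evaluate at each root $\lambda_m$: the polynomial $f(x)/(x-\lambda_k)$ vanishes at $\lambda_m$ for $k \neq m$ and takes the value $f'(\lambda_m)$ at $x = \lambda_m$, so the left-hand side evaluates to $\lambda_m^{\,i}$, matching the right-hand side. Two polynomials of degree at most $n-1$ agreeing at the $n$ distinct points $\lambda_0, \ldots, \lambda_{n-1}$ must coincide.

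Finally I would compare the coefficient of $x^j$ on both sides of the identity. On the right it is $\delta_{ij}$. On the left, substituting $f(x)/(x-\lambda_k) = \sum_j b_j^{q^k} x^j$, $f'(\lambda_k) = \delta^{q^k}$, and $\lambda_k^{\,i} = (\lambda^i)^{q^k}$, the coefficient collapses into
\[
\sum_{k=0}^{n-1} \left( \frac{\lambda^i b_j}{\delta} \right)^{q^k} = \mathrm{Tr}_{q^n/q}\!\left( \lambda^i \cdot \delta^{-1} b_j \right).
\]
Hence $\mathrm{Tr}_{q^n/q}(\lambda^i \cdot \delta^{-1} b_j) = \delta_{ij}$, which is exactly the condition that $(\delta^{-1} b_0, \ldots, \delta^{-1} b_{n-1})$ is dual to $\mathcal{B} = (1, \lambda, \ldots, \lambda^{n-1})$; uniqueness of the dual basis then identifies it with $\mathcal{B}^*$. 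The part requiring most care is the interpolation identity together with the bookkeeping that conjugating the coefficients $b_j$ is legitimate precisely because $f$ has coefficients in $\fq$; once that is set up, the collapse of the coefficient sum into a trace is immediate.
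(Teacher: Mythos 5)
Your proof is correct and complete. The paper itself offers no proof of this lemma --- it is quoted directly from \cite[Exercise 2.40]{lidl_finite_1997} --- and your argument (Lagrange interpolation of $x^i$ at the conjugates $\lambda^{q^k}$, combined with the observation that Frobenius conjugates the coefficients $b_j$ and the value $\delta=f'(\lambda)$ because $f\in\fq[x]$, so that the coefficient comparison collapses into the trace condition $\mathrm{Tr}_{q^n/q}(\lambda^i\cdot\delta^{-1}b_j)=\delta_{ij}$) is exactly the standard solution to that exercise; all the needed hypotheses (distinctness of the conjugates, $\delta\neq 0$) are correctly justified from the fact that $f$ is irreducible of degree $n$.
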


\begin{corollary}\label{cor:dualbasis}
Let $\lambda \in \fqn$ such that $\mathcal{B}=(1,\lambda,\ldots,\lambda^{n-1})$ is an ordered $\fq$-basis of $\fqn$.
Let $f(x)=a_0+a_1x+\ldots+a_{n-1}x^{n-1}+x^n$ be the minimal polynomial of $\lambda$ over $\fq$. 
Then the dual basis $\mathcal{B}^*$ of $\mathcal{B}$ is 
\[ \mathcal{B}^*=(\delta^{-1}\gamma_0,\ldots,\delta^{-1}\gamma_{n-1}), \]
where $\delta=f'(\lambda)$ and $\gamma_i=\sum_{j=1}^{n-i} \lambda^{j-1}a_{i+j}$, for every $i \in \{0,\ldots,n-1\}$. 
\end{corollary}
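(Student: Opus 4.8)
The plan is to deduce this directly from Lemma~\ref{lem:dualbasispol}, which already identifies the dual basis as $(\delta^{-1}b_0,\ldots,\delta^{-1}b_{n-1})$, where the $b_i$ are the coefficients of the quotient $f(x)/(x-\lambda)$. Since $\delta=f'(\lambda)$ is the same quantity appearing in the statement of the corollary, it therefore suffices to show that $b_i=\gamma_i$ for each $i\in\{0,\ldots,n-1\}$; the whole corollary then reduces to an explicit computation of the coefficients of $f(x)/(x-\lambda)$.

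To carry this out, I would write $f(x)=(x-\lambda)\left(\sum_{i=0}^{n-1}b_ix^i\right)$, which is legitimate because $\lambda$ is a root of $f$, then expand the right-hand side and compare coefficients of $x^i$ on both sides. The coefficient of $x^n$ gives $b_{n-1}=1=a_n$, while for $1\leq i\leq n-1$ the coefficient of $x^i$ yields the synthetic-division recurrence $b_{i-1}-\lambda b_i=a_i$, that is, $b_{i-1}=a_i+\lambda b_i$. Unwinding this recurrence from the top index downward, starting from $b_{n-1}=a_n$, produces the closed form $b_i=\sum_{k=0}^{n-1-i}\lambda^k a_{i+1+k}$; after the index shift $j=k+1$ this becomes $b_i=\sum_{j=1}^{n-i}\lambda^{j-1}a_{i+j}$, which is precisely $\gamma_i$.

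The only point requiring a separate check is the constant term: comparing coefficients of $x^0$ gives $-\lambda b_0=a_0$, and one must verify that this is consistent with the closed form $b_0=\sum_{j=1}^{n}\lambda^{j-1}a_j$. Multiplying the latter by $-\lambda$ and using $a_n=1$ recovers exactly $a_0=-(a_1\lambda+\cdots+a_{n-1}\lambda^{n-1}+\lambda^n)$, which is just the relation $f(\lambda)=0$ written out; hence the consistency is automatic and poses no real obstacle. The argument is thus elementary bookkeeping, and the main thing to watch is the index shift when passing from the recurrence to the summation defining $\gamma_i$.
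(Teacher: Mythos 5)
Your proposal is correct and follows essentially the same route as the paper: both invoke Lemma \ref{lem:dualbasispol} and then extract the coefficients $b_i$ of $f(x)/(x-\lambda)$ from the identity $f(x)=(x-\lambda)\left(\sum_{i=0}^{n-1}b_ix^i\right)$, arriving at $b_i=\sum_{j=1}^{n-i}\lambda^{j-1}a_{i+j}=\gamma_i$. Your write-up is merely more explicit about the synthetic-division recurrence and the consistency of the constant term, which the paper leaves implicit.
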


\begin{proof}
Let 
\[
\frac{f(x)}{x-\lambda}=b_0+b_1x+\cdots+b_{n-1}x^{n-1}.
\]
From Lemma \ref{lem:dualbasispol}, we have that the dual basis $\mathcal{B}^*$ of $\mathcal{B}$ is 
\[ \mathcal{B}^*=(\delta^{-1}b_0,\delta^{-1}b_1,\ldots,\delta^{-1}b_{n-1}). \]
Now we explicit the $b_i$'s. From the identity $f(x)=(x-\lambda)(b_0+b_1x+\cdots+b_{n-1}x^{n-1})$, we get
\[
b_{n-i}=\lambda^{i-1}+\lambda^{i-2}a_{n-1}+\lambda^{i-3}a_{n-2}+\cdots+ \lambda a_{n-i+2}+ a_{n-i+1},
\]
that is
\[
b_h=\sum_{j=1}^{n-h} \lambda^{j-1}a_{h+j}.
\]
\end{proof}

\begin{remark}
In the next corollary, we analyze the case when the polynomial basis is \emph{weakly self dual}.
Let $\mathcal{B}=(b_0,\ldots,b_{n-1})$ an ordered $\fq$-basis of $\fqn$ and let $\mathcal{B}^*=(b_0^*,\ldots,b_{n-1}^*)$ its dual basis. We say that $\mathcal{B}$ is \emph{weakly self dual} if there exists $\delta \in \fqn^*$ and a permutation $\pi \in S_n$ such that $b_i^*=\delta b_{\pi(i)}$, for every $i \in \{0,\ldots,n-1\}$.
In \cite{Gollmann} and in \cite{Blake}, it has been proved that $\mathcal{B}=(1,\lambda,\ldots,\lambda^{n-1})$ is a weakly self dual basis if and only if the minimal polynomial of $\lambda$ over $\fq$ is either a binomial $f(x)=x^n-d \in \fq[x]$ or a trinomial $f(x)=x^n-cx^k-1 \in \fq[x]$. See also \cite{Cao}.
\end{remark}

\begin{corollary}\label{cor:binetrin}
Let $\lambda \in \fqn$ such that $\mathcal{B}=(1,\lambda,\ldots,\lambda^{n-1})$ is an ordered $\fq$-basis of $\fqn$.
\begin{itemize}
    \item If $f(x)=x^n-d$ is the minimal polynomial of $\lambda$ over $\fq$, then the dual basis of $\mathcal{B}$ is
    \[ \mathcal{B}^*=\left(\frac{\lambda^n}{nd},\frac{\lambda^{n-1}}{nd},\ldots,\frac{\lambda}{nd},\frac{1}{nd}\right). \]
    \item If $f(x)=x^n-cx^k-1$ is the minimal polynomial of $\lambda$ over $\fq$, then the dual basis of $\mathcal{B}$ is
    \[ \mathcal{B}^*=(\delta^{-1}\lambda^{k-1},\delta^{-1}\lambda^{k-2},\ldots,\delta^{-1},\delta^{-1}\lambda^{n-1},\ldots,\delta^{-1}\lambda^k), \]
    where $\delta=\frac{n\lambda^{n-1}-ck\lambda^{k-1}}{-c+\lambda^{n-k}}$.
\end{itemize}
\end{corollary}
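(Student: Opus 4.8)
The plan is to obtain both formulas directly from Corollary \ref{cor:dualbasis}, which already gives the dual basis of $\mathcal{B}=(1,\lambda,\ldots,\lambda^{n-1})$ as $\mathcal{B}^*=(\delta_0^{-1}\gamma_0,\ldots,\delta_0^{-1}\gamma_{n-1})$ with $\delta_0=f'(\lambda)$ and $\gamma_i=\sum_{j=1}^{n-i}\lambda^{j-1}a_{i+j}$, where $f(x)=a_0+a_1x+\cdots+a_{n-1}x^{n-1}+x^n$. The first observation I would use is that the index $i+j$ in the sum ranges over $i+1,\ldots,n$, so the constant term $a_0$ never contributes; only the nonzero coefficients among $a_1,\ldots,a_{n-1}$ together with the leading coefficient $a_n=1$ matter. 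Thus in each case it suffices to locate which values of $i+j$ hit a nonzero coefficient, to evaluate $f'(\lambda)$, and then to simplify using $f(\lambda)=0$.

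For the binomial $f(x)=x^n-d$ the only coefficient entering $\gamma_i$ is $a_n=1$, reached when $i+j=n$, i.e.\ $j=n-i$; hence $\gamma_i=\lambda^{n-1-i}$ and $\delta_0=f'(\lambda)=n\lambda^{n-1}$. Therefore $\xi_i^*=\lambda^{n-1-i}/(n\lambda^{n-1})=\lambda^{-i}/n$, and clearing the negative exponent via $\lambda^n=d$ gives $\xi_i^*=\lambda^{n-i}/(nd)$ for $i=0,\ldots,n-1$, which is the claimed list.

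For the trinomial $f(x)=x^n-cx^k-1$ the nonzero coefficients relevant to $\gamma_i$ are $a_k=-c$ and $a_n=1$. The term $a_{i+j}$ is nonzero precisely when $i+j=k$ (which is possible only for $i\leq k-1$) or $i+j=n$; this yields $\gamma_i=\lambda^{n-1-i}-c\lambda^{k-1-i}$ for $i\leq k-1$ and $\gamma_i=\lambda^{n-1-i}$ for $i\geq k$, while $\delta_0=f'(\lambda)=n\lambda^{n-1}-ck\lambda^{k-1}$. At this stage the dual basis is already determined, but its entries do not yet have the uniform shape announced in the statement.

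The main step---and the only genuinely nontrivial point---is to merge the two ranges of $i$ into the single weakly self dual form $\delta^{-1}\lambda^{(\cdot)}$. The key identity is $f(\lambda)=0$ written as $\lambda^k(\lambda^{n-k}-c)=1$, equivalently $\lambda^{n-k}-c=\lambda^{-k}$. For $i\leq k-1$ one factors directly $\gamma_i=\lambda^{k-1-i}(\lambda^{n-k}-c)$; for $i\geq k$ one instead introduces this factor, $\gamma_i=\lambda^{n-1-i}=\lambda^{n-1-i+k}\cdot\lambda^{-k}=\lambda^{n-1-i+k}(\lambda^{n-k}-c)$. Setting $\delta=\delta_0/(\lambda^{n-k}-c)=(n\lambda^{n-1}-ck\lambda^{k-1})/(-c+\lambda^{n-k})$ absorbs the common factor $(\lambda^{n-k}-c)$, so that $\xi_i^*=\delta^{-1}\lambda^{k-1-i}$ for $i\leq k-1$ and $\xi_i^*=\delta^{-1}\lambda^{n-1-i+k}$ for $i\geq k$. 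Reading these exponents off as $i$ runs from $0$ to $n-1$ produces exactly the ordered list $(\delta^{-1}\lambda^{k-1},\ldots,\delta^{-1},\delta^{-1}\lambda^{n-1},\ldots,\delta^{-1}\lambda^{k})$. The remaining care is purely bookkeeping: tracking the two index ranges and checking that $\delta$ is a well-defined nonzero element of $\fqn$, which is automatic since $\lambda^{n-k}-c=\lambda^{-k}\neq 0$.
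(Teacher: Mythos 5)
Your proposal is correct and follows essentially the same route as the paper: both apply Corollary \ref{cor:dualbasis} to compute the $\gamma_i$'s, read off $f'(\lambda)$, and in the trinomial case use $\lambda^k(\lambda^{n-k}-c)=1$ to factor $(-c+\lambda^{n-k})$ out of both index ranges and absorb it into $\delta$. No gaps.
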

\begin{proof}
First suppose that $f(x)=x^n-d$. By Corollary \ref{cor:dualbasis}, the dual basis of $\mathcal{B}$ is given by
\[ (\delta^{-1}\gamma_0,\ldots,\delta^{-1}\gamma_{n-1}), \]
where $\delta=n\lambda^{n-1}$ and $\gamma_i=\lambda^{n-i-1}$.
Now, assume that $f(x)=x^n-cx^k-1$. Again, applying  Corollary \ref{cor:dualbasis}, the dual basis of $\mathcal{B}$ is given by
\[ (\overline{\delta}^{-1}\gamma_0,\ldots,\overline{\delta}^{-1}\gamma_{n-1}), \]
where $\overline{\delta}=n\lambda^{n-1}-ck\lambda^{k-1}$ and $\gamma_i=\lambda^{k-i-1}(-c+\lambda^{n-k})$ if $i<k$, and $\gamma_i=\lambda^{n+k-i-1}\lambda^{-k}=\lambda^{n+k-i-1}(-c+\lambda^{n-k})$ if $i\geq k$.
The assertion then follows since $\delta=\frac{n\lambda^{n-1}-ck\lambda^{k-1}}{-c+\lambda^{n-k}}$.
\end{proof}

In the following lemma, we determine polynomial forms of the projection maps of $\fqn$ by means of dual basis.

\begin{lemma}\label{lem:proiez}
Let $S$ and $T$ be $\fq$-subspaces of $\fqn$ such that $\fqn=S \oplus T$ and let $p_{T,S}$ be the $\fq$-linear map of $\fqn$ projecting an element of $\fqn$ from $T$ onto $S$.
Then 
\[ p_{T,S}(x)=\sum_{i=t}^{n-1}\xi_i\mathrm{Tr}_{q^n/q}(\xi_i^*x)=\sum_{j=0}^{n-1}\left( \sum_{i=t}^{n-1} \xi_i\cdot \xi_i^{*q^j} \right)x^{q^j}, \]
where $\mathcal{B}_T=\{\xi_0,\ldots,\xi_{t-1}\}$ is an $\fq$-basis of $T$, $\mathcal{B}_S=\{\xi_t,\ldots,\xi_{n-1}\}$ is an $\fq$-basis of $S$ and $\mathcal{B}^*=(\xi_0^*,\ldots,\xi_{n-1}^*)$ is the dual basis of $\mathcal{B}=(\xi_0,\ldots,\xi_{n-1})$. 
\end{lemma}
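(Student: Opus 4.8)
The plan is to exploit the defining property of the dual basis, namely $\mathrm{Tr}_{q^n/q}(\xi_i \xi_j^*) = \delta_{ij}$, in order to obtain explicit formulas for the coordinates of an arbitrary element of $\fqn$ with respect to $\mathcal{B}$, and then simply to read off the component lying in $S$.

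First I would fix $x \in \fqn$ and write its unique expansion $x = \sum_{k=0}^{n-1} c_k \xi_k$ with $c_k \in \fq$, which exists since $\mathcal{B}$ is an $\fq$-basis of $\fqn$. Applying $\mathrm{Tr}_{q^n/q}(\xi_i^* \cdot)$ to both sides and using that the trace is $\fq$-linear, so that the scalars $c_k \in \fq$ pull out of the trace, the duality relation collapses the sum to a single surviving term and yields $c_k = \mathrm{Tr}_{q^n/q}(\xi_k^* x)$ for each $k$. Hence $x = \sum_{k=0}^{n-1} \mathrm{Tr}_{q^n/q}(\xi_k^* x)\, \xi_k$.

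Next I would split this expansion according to the direct sum decomposition $\fqn = S \oplus T$. Since $\mathcal{B}_T = \{\xi_0,\ldots,\xi_{t-1}\}$ spans $T$ and $\mathcal{B}_S = \{\xi_t,\ldots,\xi_{n-1}\}$ spans $S$, the first $t$ summands constitute the $T$-component of $x$ and the last $n-t$ summands constitute its $S$-component. By definition $p_{T,S}$ annihilates $T$ and fixes $S$ pointwise, so it returns precisely the $S$-component, which gives the first claimed identity $p_{T,S}(x) = \sum_{i=t}^{n-1} \xi_i\, \mathrm{Tr}_{q^n/q}(\xi_i^* x)$.

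Finally, to pass to the polynomial form, I would expand each trace as $\mathrm{Tr}_{q^n/q}(\xi_i^* x) = \sum_{j=0}^{n-1} \xi_i^{* q^j} x^{q^j}$ and interchange the two finite sums, collecting the coefficient of $x^{q^j}$; this produces the stated $q$-polynomial $\sum_{j=0}^{n-1} \bigl(\sum_{i=t}^{n-1} \xi_i\, \xi_i^{* q^j}\bigr) x^{q^j}$. The argument is essentially a bookkeeping exercise, so there is no serious obstacle: the only points requiring care are invoking the $\fq$-linearity of the trace at the coefficient-extraction step, which is what makes the scalars detach cleanly, and correctly matching the index ranges $0,\ldots,t-1$ and $t,\ldots,n-1$ to the summands spanning $T$ and $S$, respectively.
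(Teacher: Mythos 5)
Your proof is correct and follows essentially the same route as the paper: both arguments reduce to the dual-basis relation $\mathrm{Tr}_{q^n/q}(\xi_i\xi_j^*)=\delta_{ij}$ together with $\fq$-linearity, the only cosmetic difference being that you expand a general $x$ in coordinates while the paper verifies the candidate map on the basis vectors $\xi_j$. Your explicit expansion of the trace to obtain the $q$-polynomial form is a fine (and slightly more complete) finishing step.
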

\begin{proof}
Every $x \in \fqn$ can be uniquely written as $x=u+v$ where $u\in S$ and $v \in T$, and so $p_{T,S}(x)=u$.
Denote by $\overline{p}(x)=\sum_{i=t}^{n-1}\xi_i\mathrm{Tr}_{q^n/q}(\xi_i^*x)$.
We show that $p_{T,S}(x)=\overline{p}(x)$ for every $x \in \fqn$.
First, let $j \in \{0,\ldots,t-1\}$ then
\[ \overline{p}(\xi_j)=\sum_{i=t}^{n-1}\xi_i\mathrm{Tr}_{q^n/q}(\xi_i^*\xi_j)=0, \]
since $\mathrm{Tr}_{q^n/q}(\xi_i^*\xi_j)=0$ for every $i\geq t$.
So, $\ker(\overline{p})\supseteq T$.
Now, let $j\geq t$ then
\[ \overline{p}(\xi_j)=\sum_{i=t}^{n-1}\xi_i\mathrm{Tr}_{q^n/q}(\xi_i^*\xi_j)=\xi_j, \]
so that $\overline{p}(x)$ is the projection from $T$ onto $S$. 
\end{proof}

\section{Linear sets described by projection maps}\label{sec:projection}

In this section we introduce an efficient machinery to determine $\fq$-linearized polynomials describing a linear set of rank $n$ in the projective line $\PG(1,q^n)$ of a certain form.

The family we consider can be obtained as follows.
Let $S$ and $T$ be $\fq$-subspaces of $\fqn$ such that $\dim_{\fq}(S)+\dim_{\fq}(T)=k\leq n$.
Let $t=\dim_{\fq}(T)$, $s=\dim_{\fq}(S)$ and 
\[ U=T \times S\subseteq \fqn^2. \]
Hence
\begin{equation}\label{eq:formlinearset2}
    L_U=\{\langle (a,b) \rangle_{\fqn} \colon a \in T, b \in S, (a,b)\ne(0,0)\}.
\end{equation}
Clearly, $\dim_{\fq}(U)=k$, $w_{L_U}(\langle (1,0)\rangle_{\fqn})=t$ and $w_{L_U}(\langle (0,1)\rangle_{\fqn})=s$.
For those linear sets $L_U$ constructed as above and having at least one point of weight one in $L_U$, we have the following bounds on the number of its points.

\begin{proposition}
Let $L_U$ be an $\fq$-linear set as in \eqref{eq:formlinearset2}. If at least one point of weight one in $L_U$ then
\begin{equation}\label{eq:bound}
  q^{k-1}+1  \leq |L_U|\leq q^{k-1}+\ldots+q^{\max\{t,s\}}-q^{\min\{t,s\}-1}-\ldots-q+1.
\end{equation}
\end{proposition}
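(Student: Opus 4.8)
The plan is to prove the two inequalities independently. The upper bound is a direct vector count that uses only $t,s\ge 1$. The two distinguished points $\langle(1,0)\rangle_{\fqn}$ and $\langle(0,1)\rangle_{\fqn}$ lie in $L_U$ with weights $t$ and $s$, so they account for $(q^t-1)+(q^s-1)$ of the $q^k-1$ nonzero vectors of $U$. The remaining
\[ (q^k-1)-(q^t-1)-(q^s-1)=(q^t-1)(q^s-1) \]
nonzero vectors are spread among the other points of $L_U$, each of which has weight at least one and hence contains at least $q-1$ nonzero vectors of $U$; therefore the number of points other than the two distinguished ones is at most $(q^t-1)(q^s-1)/(q-1)$, giving $|L_U|\le 2+(q^t-1)(q^s-1)/(q-1)$. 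Expanding $(q^t-1)(q^s-1)=q^k-q^{\max\{t,s\}}-q^{\min\{t,s\}}+1$ and simplifying turns this into exactly the claimed right-hand side.

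For the lower bound I would first set up the elementary count. Every point of $L_U$ other than the two distinguished ones is $\langle(1,m)\rangle_{\fqn}$ for a unique $m\in\fqn^*$, and it lies in $L_U$ precisely when $T_m:=T\cap m^{-1}S\ne\{0\}$, with weight $\dim_{\fq}(T_m)\le\min\{t,s\}$. Counting the nonzero vectors lying on these points yields $\sum_{m\in\fqn^*}\left(q^{\dim_{\fq}(T_m)}-1\right)=(q^t-1)(q^s-1)$, so that, writing $R=\{m\in\fqn^*:T_m\ne\{0\}\}$, we get $|R|\left(q^{\min\{t,s\}}-1\right)\ge(q^t-1)(q^s-1)$, that is $|R|\ge q^{\max\{t,s\}}-1$ and $|L_U|\ge q^{\max\{t,s\}}+1$. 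When $\min\{t,s\}=1$ — equivalently, when one of the two distinguished points is itself the weight-one point — this already delivers $q^{k-1}+1$.

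The hard part is the remaining case, where $t,s\ge 2$ and the weight-one point is a generic point $\langle(1,m_0)\rangle_{\fqn}$. Here the count above gives only $q^{\max\{t,s\}}+1\le q^{k-2}+1$, which falls short, since the crude estimate $\dim_{\fq}(T_m)\le\min\{t,s\}$ cannot detect that most generic points must in fact carry small weight. To close this gap I would appeal to the sharp minimum-size theorem for $\fq$-linear sets of rank $k$ on $\PG(1,q^n)$ admitting a point of weight one, which asserts exactly $|L_U|\ge q^{k-1}+1$; the hypothesis of the proposition is tailored precisely to invoke it. I expect this to be the main obstacle: a self-contained argument would have to control how many generic points can simultaneously have high weight — equivalently, to bound the multiplicative quotient set $(S\setminus\{0\})/(T\setminus\{0\})$ from below by $q^{k-1}-1$ once $\dim_{\fq}(T\cap m_0^{-1}S)=1$ — and this is the content of the minimum-size result rather than of the elementary vector count.
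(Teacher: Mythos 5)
Your proof is correct and follows essentially the same route as the paper: the upper bound is the elementary count coming from the weight-distribution identities (your direct vector count $|L_U|\le 2+(q^t-1)(q^s-1)/(q-1)$ is exactly what the paper extracts from \eqref{eq:card}, \eqref{eq:pesicard} and \eqref{eq:pesivett}), and the lower bound is obtained, as in the paper, by invoking the minimum-size theorem of De Beule and Van de Voorde for rank-$k$ linear sets with a point of weight one (Theorem 1.2 of \cite{DeBeuleVdV}, with \cite[Lemma 2.2]{BoPol} covering $k=n$). You correctly identify that the elementary count alone only yields $q^{\max\{t,s\}}+1$ and that the external theorem is genuinely needed when $\min\{t,s\}\ge 2$.
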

\begin{proof}
The lower bound follows by \cite[Theorem 1.2]{DeBeuleVdV} (and for $k=n$ by \cite[Lemma 2.2]{BoPol}) and the upper bound is a consequence of \eqref{eq:card}, \eqref{eq:pesicard} and \eqref{eq:pesivett}.
\end{proof}

Up to the action of $\mathrm{PGL}(2,q^n)$ on $\PG(1,q^n)$, linear sets of the shape \eqref{eq:formlinearset2} describe all linear sets of rank $k$ admitting two points of complementary weights, as pointed out in the next result.
To this aim we need the following notation.
If $S$ and $T$ are two subsets of $\fqn$, we denote by $S\cdot T$ the set $\{ s\cdot t \colon s \in S, t \in T\}$ and by $S^{-1}$ the set $\{s^{-1} \colon s \in S\setminus\{0\}\}$.

\begin{proposition}\label{th:UasUST}
If $L_W$ is an $\fq$-linear set of rank $k$ in $\PG(1,q^n)$ for which there exist two distinct points $P,Q \in L_W$ such that
$w_{L_W}(P)=s$, $w_{L_W}(Q)=t$ and $s+t=k\leq n$. Then, 
$L_W$ is $\mathrm{PG}\mathrm{L}(2,q^n)$-equivalent to a linear set  $L_U$ of shape \eqref{eq:formlinearset2}, where $U=T \times S$,
for some $\fq$-subspaces $S$ and $T$ of $\fqn$ with $\dim_{\fq}(S)=s$, $\dim_{\fq}(T)=t$. Also, we can assume $T\cap S=\{0\}$.
\end{proposition}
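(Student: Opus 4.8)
The plan is to normalise the two special points by a projectivity, then read off the subspace structure of the representing space from a dimension count, and finally to force the transversality $T\cap S=\{0\}$ by a diagonal map.

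First I would use that $\mathrm{PGL}(2,q^n)$ acts $3$-transitively, in particular $2$-transitively, on the points of $\PG(1,q^n)$: choose $g\in\mathrm{GL}(2,q^n)$ whose induced projectivity sends $P\mapsto\langle(0,1)\rangle_{\fqn}$ and $Q\mapsto\langle(1,0)\rangle_{\fqn}$. Since the image of a linear set under a projectivity is again a linear set, with $L_{g(W)}$ the image of $L_W$, and since $g$ is an $\fqn$-linear isomorphism it carries $W\cap\langle \mathbf v\rangle_{\fqn}$ isomorphically onto $g(W)\cap\langle g\mathbf v\rangle_{\fqn}$, so weights are preserved pointwise. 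Hence I may replace $W$ by $W'=g(W)$ and assume $w_{L_{W'}}(\langle(1,0)\rangle_{\fqn})=t$ and $w_{L_{W'}}(\langle(0,1)\rangle_{\fqn})=s$, matching the weight pattern of shape \eqref{eq:formlinearset2}.

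Next I would identify $W'$ explicitly. Set $T=\{a\in\fqn\colon(a,0)\in W'\}$ and $S=\{b\in\fqn\colon(0,b)\in W'\}$; these are $\fq$-subspaces with $T\times\{0\}=W'\cap(\fqn\times\{0\})$ and $\{0\}\times S=W'\cap(\{0\}\times\fqn)$, so the weight hypotheses give $\dim_{\fq}T=t$ and $\dim_{\fq}S=s$. As $W'$ is $\fq$-linear it contains the sum $(T\times\{0\})+(\{0\}\times S)=T\times S$, whose $\fq$-dimension is $t+s=k=\dim_{\fq}W'$; therefore $W'=T\times S$, i.e.\ $L_{W'}$ already has shape \eqref{eq:formlinearset2}.

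It remains to arrange $T\cap S=\{0\}$, and this is the only step requiring real care. I would apply the diagonal map $(x,y)\mapsto(x,\nu y)$ for a suitable $\nu\in\fqn^*$, which lies in $\mathrm{PGL}(2,q^n)$, fixes both $\langle(1,0)\rangle_{\fqn}$ and $\langle(0,1)\rangle_{\fqn}$, and replaces $S$ by $\nu S$ while leaving $T$ unchanged. Now $T\cap\nu S\neq\{0\}$ holds exactly when $\nu=ab^{-1}$ for some nonzero $a\in T$, $b\in S$, that is, precisely when $\nu\in T\cdot S^{-1}$; so it suffices to show $T\cdot S^{-1}\neq\fqn^*$. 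Counting the pairs $(a,b)\in(T\setminus\{0\})\times(S\setminus\{0\})$ and noting that $(\lambda a,\lambda b)$ yields the same quotient for every $\lambda\in\fq^*$ gives $|T\cdot S^{-1}|\le(q^t-1)(q^s-1)/(q-1)$. The main obstacle is then the elementary but slightly delicate inequality $(q^t-1)(q^s-1)<(q-1)(q^n-1)$; I expect to verify it directly, since using $k\le n$ and $s,t\ge 1$ one computes $(q-1)(q^n-1)-(q^t-1)(q^s-1)\ge q^n(q-2)+q>0$, valid for every prime power $q\ge 2$. This forces $|T\cdot S^{-1}|<q^n-1=|\fqn^*|$, so a valid $\nu$ exists; choosing such a $\nu$ and relabelling $\nu S$ as $S$ (still of dimension $s$) produces $W'=T\times S$ with $T\cap S=\{0\}$, which completes the argument.
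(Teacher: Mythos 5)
Your proof is correct and follows the paper's argument almost step for step: normalise $P$ and $Q$ to the two fundamental points by $2$-transitivity of $\mathrm{PGL}(2,q^n)$, recover $T\times S$ from the dimension count $t+s=k=\dim_{\fq}W'$, and then rescale one factor to force transversality. The only place you diverge is the final step: the paper argues by contradiction that if $T\cdot S^{-1}=\fqn$ then every point $\langle(\eta,1)\rangle_{\fqn}$ lies in the linear set, so $|L_{U'}|=q^n+1$, which violates the bound \eqref{eq:card} for a linear set of rank $k\le n$; you instead bound $|T\cdot S^{-1}|$ directly by $(q^t-1)(q^s-1)/(q-1)<q^n-1$. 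Your version is slightly more self-contained (it needs no appeal to the cardinality bound on linear sets) and yields an explicit count of the excluded scalars $\nu$, while the paper's version is shorter and reuses machinery already in place; both are valid, and your elementary inequality checks out for all $q\ge 2$ given $s,t\ge 1$ and $s+t\le n$.
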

\begin{proof}
Let $\varphi \in \mathrm{PGL}(2,q^n)$ be a collineation of $\PG(1,q^n)$ such that $\varphi(Q)=\langle (1,0)\rangle_{\fqn}$ and $\varphi(P)=\langle (0,1)\rangle_{\fqn}$, and let $f$ be the associated linear isomorphism of $\fqn^2$. 
Then $\varphi(L_W)=L_{U'}$ where $U'=f(W)$, $w_{L_{U'}}(\langle (1,0)\rangle_{\fqn})=t$ and $w_{L_{U'}}(\langle (0,1)\rangle_{\fqn})=s$.
Hence,
\[ U'=(U'\cap \langle (1,0)\rangle_{\fqn})\oplus (U'\cap \langle (0,1)\rangle_{\fqn}). \]
Choosing $T'$ and $S'$ as the $\fq$-subspaces obtained in the following way $T'\times \{0\}=U'\cap \langle (1,0)\rangle_{\fqn}$ and $\{0\}\times S'=U'\cap \langle (0,1)\rangle_{\fqn}$, then $L_W$ is $\mathrm{PG}\mathrm{L}(2,q^n)$-equivalent to a linear set $L_{U'}$ of shape \eqref{eq:formlinearset2}, with $U'=T'\times S'$.
Now, we prove the existence of $\lambda \in \fqn^*$ such that $T'\cap \lambda S'=\{0\}$.
By contradiction, suppose that for every $\lambda \in \fqn^*$ there exist $x \in T'\setminus \{0\}$ and $y \in S'\setminus\{0\}$ such that $x=\lambda y$.
It follows that
\begin{equation}\label{eq:TS-1} 
T'\cdot S'^{-1}=\{t s^{-1} \colon s \in S'\setminus\{0\}, t \in T'\}=\fqn.
\end{equation}
Since $\langle (x/y,1) \rangle_{\fqn} \in L_{U'}$ for every $x \in T'$ and $y \in S'\setminus\{0\}$, by \eqref{eq:TS-1} we get $\langle (\eta,1) \rangle_{\fqn} \in L_{U'}$ for every $\eta \in \fqn$.
So, it follows that $|L_{U'}|= q^n+1$, which is a contradiction since the rank of $L_{U'}$ is $\dim_{\fq}(S')+\dim_{\fq}(T')\leq n$.
Therefore, choosing $\lambda \in \fqn^*$ such that $T'\cap \lambda S'=\{0\}$ and setting $T=T'$ and $S=\lambda S'$, the assertion follows.
\end{proof}

The following is a useful criteria to study the $\Gamma\mathrm{L}(2,q^n)$-equivalence of $\fq$-subspaces involved in this family of linear sets.

\begin{lemma}\label{lem:criteria}
Consider $S$, $S'$, $T$ and $T'$ $\fq$-subspaces of $\fqn$ such that $\dim_{\fq}(S)+\dim_{\fq}(T)=k\leq n$ and $\dim_{\fq}(S')+\dim_{\fq}(T')=k\leq n$.
Let 
\[ U=T \times S, \]
and 
\[ U'=T' \times S'. \]
Suppose that $\langle(1,0)\rangle_{\fqn}$ and $\langle(0,1)\rangle_{\fqn}$ are the only points in $L_U$ and $L_{U'}$ having weight $t$ and $s$, respectively. 
\begin{itemize}
    \item If $s= t$, then $U$ and $U'$ are $\Gamma\mathrm{L}(2,q^n)$-equivalent if and only if 
    \[ \text{either}\,\, S'=\lambda S^\rho\,\, \text{and}\,\, T'=\mu T^\rho,\,\,\text{or}\,\, S'=\lambda T^\rho\,\, \text{and}\,\, T'=\mu S^\rho, \]
    where $\lambda,\mu\in \fqn^*$ and $\rho \in \mathrm{Aut}(\fqn)$.
    \item If $s\ne t$, then $U$ and $U'$ are $\Gamma\mathrm{L}(2,q^n)$-equivalent if and only if 
    \[ S'=\lambda S^\rho\,\, \text{and}\,\, T'=\mu T^\rho, \]
    where $\lambda,\mu\in \fqn^*$ and $\rho \in \mathrm{Aut}(\fqn)$.
\end{itemize}
\end{lemma}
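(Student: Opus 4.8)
The plan is to translate $\Gamma\mathrm{L}(2,q^n)$-equivalence of the subspaces into the action of the induced projectivity on the two distinguished points $\langle(1,0)\rangle_{\fqn}$ and $\langle(0,1)\rangle_{\fqn}$, exploiting that, under our hypothesis, these are precisely the points singled out by their weights. The sufficiency is immediate and I would dispose of it first by exhibiting explicit maps. If $S'=\lambda S^\rho$ and $T'=\mu T^\rho$, the diagonal semilinear map $\Phi\colon(x,y)\mapsto(\mu x^\rho,\lambda y^\rho)$ lies in $\Gamma\mathrm{L}(2,q^n)$ and sends $U=T\times S$ to $\mu T^\rho\times\lambda S^\rho=T'\times S'=U'$; if instead $S'=\lambda T^\rho$ and $T'=\mu S^\rho$ (relevant only when $s=t$), the antidiagonal map $\Phi\colon(x,y)\mapsto(\mu y^\rho,\lambda x^\rho)$ sends $U$ to $\mu S^\rho\times\lambda T^\rho=T'\times S'=U'$. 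Here one uses that $\rho\in\Aut(\fqn)$ fixes the unique subfield $\fq$ of $\fqn$, so that $S^\rho$ and $T^\rho$ are again $\fq$-subspaces of the same $\fq$-dimension and the displayed products are genuinely of the form \eqref{eq:formlinearset2}.

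For the necessity, suppose $\Phi\in\Gamma\mathrm{L}(2,q^n)$ satisfies $\Phi(U)=U'$, and write $\Phi(x,y)=A\,(x^\rho,y^\rho)^{\top}$ with $A\in\mathrm{GL}(2,q^n)$ and $\rho\in\Aut(\fqn)$. The induced collineation $\overline{\Phi}\in\mathrm{P\Gamma L}(2,q^n)$ maps $L_U$ onto $L_{U'}$ and, by Proposition \ref{prop:weightinvariace}, preserves the weight of every point. By hypothesis $\langle(1,0)\rangle_{\fqn}$ and $\langle(0,1)\rangle_{\fqn}$ are the unique points of weight $t$ and $s$ both in $L_U$ and in $L_{U'}$ (in particular this forces $\dim_{\fq}T'=t$ and $\dim_{\fq}S'=s$). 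Since $\overline{\Phi}(\langle(1,0)\rangle_{\fqn})$ has weight $t$ and $\overline{\Phi}(\langle(0,1)\rangle_{\fqn})$ has weight $s$ in $L_{U'}$, the map $\overline{\Phi}$ must permute the two-element set $\{\langle(1,0)\rangle_{\fqn},\langle(0,1)\rangle_{\fqn}\}$.

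I then split according to this permutation. If $s\neq t$ the two distinguished points carry distinct weights, so $\overline{\Phi}$ must fix each of them; evaluating $\Phi$ at $(1,0)$ and $(0,1)$ forces the columns of $A$ to be $(\mu,0)^{\top}$ and $(0,\lambda)^{\top}$, hence $A$ is diagonal, and computing $\Phi(U)=\mu T^\rho\times\lambda S^\rho=U'$ yields $T'=\mu T^\rho$ and $S'=\lambda S^\rho$. If $s=t$, then $\overline{\Phi}$ either fixes both points, leading as above to the first alternative, or swaps them, which forces $A$ to be antidiagonal and gives $\Phi(U)=\lambda S^\rho\times\mu T^\rho=U'$, i.e.\ $T'=\lambda S^\rho$ and $S'=\mu T^\rho$; after relabelling the free scalars $\lambda,\mu$ this is exactly the second alternative $S'=\lambda T^\rho$, $T'=\mu S^\rho$.

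I expect the only delicate point to be the bookkeeping in the case $s=t$: one must confirm that the swap is geometrically admissible, and this is precisely where the assumption that there are \emph{no} further points of weight $t=s$ is used, guaranteeing that $\overline{\Phi}$ cannot carry a distinguished point to any other point. One must also check that the swap produces exactly the second alternative and not a spurious additional case. Everything else—the passage from fixed or swapped points to diagonal or antidiagonal $A$, and the resulting image computations—is routine linear algebra and I would not dwell on it.
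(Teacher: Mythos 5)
Your proposal is correct and follows essentially the same route as the paper: use weight-preservation under $\Gamma\mathrm{L}(2,q^n)$ together with the uniqueness of the two distinguished points to force the matrix of the equivalence to be diagonal (or, when $s=t$, possibly antidiagonal), and then read off the relations between $S,T$ and $S',T'$. The only difference is that you also write out the easy sufficiency direction explicitly, which the paper leaves implicit.
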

\begin{proof}
Suppose that $U$ and $U'$ are $\Gamma\mathrm{L}(2,q^n)$-equivalent, then there exist a matrix $\left(\begin{array}{cc} a & b\\ c & d\end{array}\right) \in \mathrm{GL}(2,q^n)$ and $\rho \in \mathrm{Aut}(\fqn)$ such that
\[ \left(\begin{array}{cc} a & b\\ c & d \end{array}\right) U^\rho =U'. \]
Since $\langle(1,0)\rangle_{\fqn}$ and $\langle(0,1)\rangle_{\fqn}$ are the only points in $L_U$ and $L_{U'}$ having weight $s$ and $t$, and the elements of $\mathrm{\Gamma L}(2,q^n)$ preserves the weight of points, we get
\begin{itemize}
    \item if $s=t$, then either $b=c=0$ or $a=d=0$;
    \item if $s\ne t$, then $b=c=0$.
\end{itemize}
So that 
\begin{itemize}
    \item if $s=t$, then either $T'=a T^\rho$ and $S'=a S^\rho$, or $T'=c S^\rho$ and $S'=b T^\rho$;
    \item if $s\ne t$, then $T'=aT^\rho$ and $S'=dS^\rho$.
\end{itemize}
\end{proof}

Now, our aim is to determine a polynomial form for linear sets of shape \eqref{eq:formlinearset2} having rank $n$.

\begin{theorem}\label{clas}
Let $L$ be an $\fq$-linear set of rank $n$ in $\PG(1,q^n)$ admitting a point $P$ of weight $t$ and a point $Q\ne P$ of weight $s$ with $t+s=n$.
Then $L$ is  $\mathrm{PGL}(2,q^n)$-equivalent to the $\fq$-linear set
\[L_{p_{T,S}}=\{\langle (x,p_{T,S}(x)) \rangle_{\fqn} \colon x \in \fqn^*\},\]
where $T$ and $S$ are $\fq$-subspaces of $\fqn$ of dimension $t$ and $s$, respectively, such that $\fqn=T\oplus S$ and $p_{T,S}(x)$ is the projection map of $\fqn$ from $T$ onto $S$.
Moreover, if $T=\langle \xi_0,\ldots,\xi_{t-1}\rangle_{\fq}$, $S=\langle \xi_t,\ldots,\xi_{n-1} \rangle_{\fq}$ and $\mathcal{B}^*=(\xi_0^*,\ldots,\xi_{n-1}^*)$ is the dual basis of $\mathcal{B}=(\xi_0,\ldots,\xi_{n-1})$, then
\[ p_{T,S}(x)=\sum_{i=t}^{n-1}\xi_i\mathrm{Tr}_{q^n/q}(\xi_i^*x)=\sum_{j=0}^{n-1}\left( \sum_{i=t}^{n-1} \xi_i\cdot \xi_i^{*q^j} \right)x^{q^j}. \]
\end{theorem}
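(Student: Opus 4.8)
The plan is to obtain the statement as an assembly of the two results already in place: Proposition \ref{th:UasUST}, which normalizes the two complementary-weight points, and Lemma \ref{lem:proiez}, which supplies the polynomial form of a projection. The only genuinely new ingredient is a single unipotent collineation converting the ``direct product'' shape $T\times S$ into the ``graph of a projection'' shape.

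First I would apply Proposition \ref{th:UasUST} in the case $k=n$. Since $L$ has a point $P$ of weight $t$ and a point $Q\ne P$ of weight $s$ with $t+s=n$, the proposition yields a collineation in $\mathrm{PGL}(2,q^n)$ carrying $L$ to $L_U$ with $U=T\times S$, where $\dim_{\fq}T=t$, $\dim_{\fq}S=s$ and $T\cap S=\{0\}$. Because $t+s=n$ together with $T\cap S=\{0\}$, the dimension count forces $\fqn=T\oplus S$, so the projection map $p_{T,S}$ from $T$ onto $S$ is well defined.

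The central step is to show that $L_U$ is $\mathrm{PGL}(2,q^n)$-equivalent to $L_{p_{T,S}}$. For this I would take the linear map $\varphi$ of $\fqn^2$ with matrix $\left(\begin{smallmatrix} 1 & 1 \\ 0 & 1\end{smallmatrix}\right)\in\mathrm{GL}(2,q^n)$ and check that $\varphi(U)=U_{p_{T,S}}$, where $U_{p_{T,S}}=\{(x,p_{T,S}(x))\colon x\in\fqn\}$. Indeed, a generic vector of $U$ is $(a,b)$ with $a\in T$, $b\in S$, and $\varphi(a,b)=(a+b,b)$; writing $x=a+b$, the decomposition $\fqn=T\oplus S$ shows that $x$ runs through all of $\fqn$ as $(a,b)$ runs through $T\times S$, and that $b$ is exactly the $S$-component $p_{T,S}(x)$ of $x$. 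Hence $\varphi(a,b)=(x,p_{T,S}(x))$ and $\varphi(U)=U_{p_{T,S}}$, giving the desired equivalence. As a sanity check, the weight-$s$ point $\langle(0,1)\rangle_{\fqn}$ of $L_U$ is sent to $\langle(1,1)\rangle_{\fqn}$, which has weight $\dim_{\fq}S=s$ in $L_{p_{T,S}}$ because it corresponds to the fixed space of $p_{T,S}$, while $\langle(1,0)\rangle_{\fqn}$ has weight $\dim_{\fq}\ker p_{T,S}=t$.

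Finally, the explicit polynomial form in the ``moreover'' clause is immediate from Lemma \ref{lem:proiez}: with $\mathcal{B}_T=\{\xi_0,\ldots,\xi_{t-1}\}$ a basis of $T$, $\mathcal{B}_S=\{\xi_t,\ldots,\xi_{n-1}\}$ a basis of $S$, and $\mathcal{B}^*$ the dual basis of $\mathcal{B}=(\xi_0,\ldots,\xi_{n-1})$, that lemma already provides both expressions for $p_{T,S}(x)$. I do not expect a real obstacle in this argument, since it is essentially a bookkeeping of prior results; the only point demanding genuine care is verifying that the chosen collineation $\varphi$ maps $U$ bijectively onto $U_{p_{T,S}}$, which hinges on the uniqueness of the $T\oplus S$ decomposition.
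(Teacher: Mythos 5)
Your proposal is correct and follows essentially the same route as the paper: normalize via Proposition \ref{th:UasUST} to $U=T\times S$ with $\fqn=T\oplus S$, apply the unipotent map $(a,b)\mapsto(a+b,b)$ to turn $U$ into the graph of $p_{T,S}$, and invoke Lemma \ref{lem:proiez} for the polynomial form. The only difference is your added (correct) sanity check on the images of the two distinguished points, which the paper omits.
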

\begin{proof}
Because of Proposition \ref{th:UasUST}, for every such $\fq$-linear set $L$ there exist two $\fq$-subspaces $S$ and $T$ of $\fqn$ such that $L$ is $\mathrm{PGL}(2,q^n)$-equivalent to $L_U$, where $U=T\times S$ and $T\cap S=\{0\}$.
Also $t=\dim_{\fq}(T)$ and $s=\dim_{\fq}(S)$, hence $t+s=n$.
Note that $\fqn=T\oplus S$.
Hence, by applying the $\fqn$-linear transformation $\phi$ of $\fqn\times\fqn$ defined by $(a,b)\mapsto \left(\left(\begin{array}{cc} 1 & 1 \\ 0 & 1 \end{array}\right)\left(\begin{array}{ll}a\\ b \end{array}\right)\right)^t$ we obtain
\[ \phi(U)=\{ (x,p_{T,S}(x)) \colon x \in \fqn \}, \]
where $p_{T,S}(x)$ is the projection map from $T$ onto $S$.
The assertion then follows from Lemma \ref{lem:proiez}.
\end{proof}

\begin{remark}
Note that by applying to $\overline{\phi}$ the $\fqn$-linear isomorphism of $\fqn^2$ to $U=T\times S$ defined by $(a,b)\mapsto \left( \left(\begin{array}{cc} 1 & 1 \\ 1 & 0 \end{array}\right) \left(\begin{array}{cc} a \\ b \end{array}\right)\right)^t$ we obtain that $L_U$ is $\mathrm{PGL}(2,q^n)$-equivalent to the $\fq$-linear set $L_{p_{S,T}}$, where 
\[ p_{S,T}(x)=\sum_{i=0}^{t-1}\xi_i\mathrm{Tr}_{q^n/q}(\xi_i^*x)=\sum_{j=0}^{n-1}\left( \sum_{i=0}^{t-1} \xi_i\cdot \xi_i^{*q^j} \right)x^{q^j}, \]
with the notation of Theorem \ref{clas}, is the projection map of $\fqn$ from $S$ onto $T$.
\end{remark}

Note that to compute $\mathcal{B}^*$ one could use Proposition \ref{prop:dualbasesgeneral}, once $\mathcal{B}$ is fixed.\\

\begin{corollary}
Let $T$ and $S$ be two $\fq$-subspaces of $\fqn$ of dimension $t$ and $s$, respectively, such that $\fqn=T\oplus S$ and let $p_{T,S}(x)$ be the projection map of $\fqn$ from $T$ onto $S$.
Suppose that $T=\langle \xi_0,\ldots,\xi_{t-1}\rangle_{\fq}$, $S=\langle \xi_t,\ldots,\xi_{n-1} \rangle_{\fq}$ and $\mathcal{B}^*=(\xi_0^*,\ldots,\xi_{n-1}^*)$ is the dual basis of $\mathcal{B}=(\xi_0,\ldots,\xi_{n-1})$. Then the dual of $L_{p_{T,S}}$ is $L_{p_{T',S'}}$, where $T'=\langle \xi_0^*,\ldots,\xi_{t-1}^*\rangle_{\fq}$ and $S'=\langle \xi_t^*,\ldots,\xi_{n-1}^* \rangle_{\fq}$.
\end{corollary}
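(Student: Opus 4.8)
The plan is to reduce the statement to an identity between two projection maps and then read it off from the dual-basis relations. Since $L_{p_{T,S}}$ is an $\fq$-linear set of rank $n$ in $\PG(1,q^n)$, the description of the dual recalled in Section~2 gives that its dual is $L_{(p_{T,S})^\top}$, where $(p_{T,S})^\top$ is the adjoint of the $q$-polynomial $p_{T,S}$ with respect to the trace bilinear form $(a,b)\mapsto \mathrm{Tr}_{q^n/q}(ab)$. Thus it suffices to prove the polynomial identity $(p_{T,S})^\top = p_{T',S'}$, since equal polynomials define equal linear sets.

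First I would identify the adjoint of a projection abstractly. Writing $\perp$ for the orthogonal complement with respect to the form above, for any $\fq$-linear endomorphism $g$ of $\fqn$ the defining relation $\mathrm{Tr}_{q^n/q}(g(\alpha)\beta)=\mathrm{Tr}_{q^n/q}(\alpha\,g^\top(\beta))$ together with nondegeneracy yields $\ker(g^\top)=(\mathrm{Im}\,g)^\perp$ and, by a dimension count, $\mathrm{Im}(g^\top)=(\ker g)^\perp$. Applying this to $g=p_{T,S}$, whose image is $S$ and whose kernel is $T$, gives $\ker((p_{T,S})^\top)=S^\perp$ and $\mathrm{Im}((p_{T,S})^\top)=T^\perp$. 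Moreover $(p_{T,S})^2=p_{T,S}$ forces $((p_{T,S})^\top)^2=(p_{T,S})^\top$, so the adjoint is itself a projection; since $\fqn=T\oplus S$ implies $\fqn=S^\perp\oplus T^\perp$, in the notation of Lemma~\ref{lem:proiez} we conclude $(p_{T,S})^\top=p_{S^\perp,T^\perp}$.

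The second step is to recognise $S^\perp$ and $T^\perp$ among the dual-basis vectors. From $\mathrm{Tr}_{q^n/q}(\xi_i\xi_j^*)=\delta_{ij}$, each $\xi_j^*$ with $j\geq t$ annihilates $\xi_0,\ldots,\xi_{t-1}$, so $S'=\langle\xi_t^*,\ldots,\xi_{n-1}^*\rangle_{\fq}\subseteq T^\perp$; comparing dimensions ($\dim_{\fq}T^\perp=n-t=s$) gives $T^\perp=S'$, and symmetrically $S^\perp=T'$. Substituting into the previous step yields $(p_{T,S})^\top=p_{T',S'}$, which is the claim.

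I do not expect a serious obstacle: the result is a bookkeeping identity, and the only delicate points are getting the direction of orthogonality right (the kernel of the adjoint is the perp of the image, not of the kernel) and verifying $\fqn=S^\perp\oplus T^\perp$ so that $p_{S^\perp,T^\perp}$ is well defined. As a cross-check, the identity can also be established by direct computation: feeding the explicit expansion $p_{T,S}(x)=\sum_{j}\big(\sum_{i=t}^{n-1}\xi_i\xi_i^{*q^j}\big)x^{q^j}$ from Theorem~\ref{clas} into the adjoint formula and using $\xi_i^{*q^n}=\xi_i^*$ shows that the coefficient of $x^{q^k}$ in $(p_{T,S})^\top$ equals $\sum_{i=t}^{n-1}\xi_i^*\xi_i^{q^k}$, which is exactly the coefficient produced by Lemma~\ref{lem:proiez} applied to the basis $\mathcal{B}^*$ (whose dual basis is $\mathcal{B}$). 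The conceptual route is cleaner, so I would present that as the main argument.
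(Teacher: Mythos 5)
Your proposal is correct. The paper's own proof is exactly the computation you relegate to a ``cross-check'': it takes the coefficients $\sum_{i=t}^{n-1}\xi_i\xi_i^{*q^j}$ from Lemma \ref{lem:proiez}, applies the adjoint formula, reindexes $h=n-j$ and uses $\xi_i^{*q^n}=\xi_i^*$ to land on $\sum_{i=t}^{n-1}\xi_i^{q^h}\xi_i^*$, which Lemma \ref{lem:proiez} (applied to $\mathcal{B}^*$, whose dual basis is $\mathcal{B}$ by symmetry of the trace form) identifies as $p_{T',S'}$ --- a two-line verification. Your main, coordinate-free argument is genuinely different and sound: $\ker(g^\top)=(\mathrm{Im}\,g)^\perp$ and $\mathrm{Im}(g^\top)=(\ker g)^\perp$ by nondegeneracy and a dimension count, idempotency passes to the adjoint since $(g\circ g)^\top=g^\top\circ g^\top$, so $(p_{T,S})^\top=p_{S^\perp,T^\perp}$, and the dual-basis relations plus dimension comparison give $S^\perp=T'$ and $T^\perp=S'$. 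What your route buys is conceptual content independent of the chosen bases --- it shows the adjoint of any projection is the projection along the perps, with the dual basis entering only to name $T^\perp$ and $S^\perp$ --- at the cost of being longer than the paper's direct coefficient match; both are complete proofs.
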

\begin{proof}
The dual of $L_{p_{T,S}}$ is $L_{p_{T,S}^\top}$ and
\[ p_{T,S}(x)^\top=\sum_{j=0}^{n-1}\left( \sum_{i=t}^{n-1} \xi_i\cdot \xi_i^{*q^j} \right)^{q^{n-j}}x^{q^{n-j}}=\sum_{h=0}^{n-1}\left( \sum_{i=t}^{n-1} \xi_i^{q^h}\cdot \xi_i^{*} \right)x^{q^{h}}=p_{T',S'}(x), \]
by Lemma \ref{lem:proiez}.
\end{proof}

As far as we know, examples of $\fq$-linear sets of rank $n$ of the projective line $\PG(1,q^n)$ admitting exactly two points of weight greater than one are known only in the case $n=4$ and are constructed in \cite{BoPol} (see also \cite{CsZ2018}).
In Section \ref{sec:2weight} we first prove that they exist if and only if $n$ is even and then we exhibit two families of such $\fq$-linear sets of $\PG(1,q^n)$ for every $n$ even and $n\geq 4$.

\section{Linear sets with exactly two points of weight greater than one}\label{sec:2weight}

In this section we deal with $\fq$-linear sets of rank $k$ in $\PG(1,q^n)$ of shape \eqref{eq:formlinearset2} admitting exactly two points of weight greater than one.
We are able to characterize such linear sets as follows.

\begin{theorem}\label{th:SS-1}
Let $S$ and $T$ be two $\fq$-subspaces of $\fqn$ such that $\dim_{\fq}(S)+\dim_{\fq}(T)=k\leq n$.
Let $t=\dim_{\fq}(T)$, $s=\dim_{\fq}(S)$, $t\geq s$ and $U=T \times S$.
Then, the following are equivalent
\begin{itemize}
    \item[i)] $|L_U|=q^{k-1}+\ldots+q^{t}-q^{s-1}-\ldots-q+1$;
    \item[ii)] all the points of $L_U$ different from $\langle(1,0)\rangle_{\fqn}$ and $\langle(0,1)\rangle_{\fqn}$ have weight one in $L_U$;
    \item[iii)] $S\cdot S^{-1} \cap T\cdot T^{-1}=\fq$;
    \item[iv)] $\dim_{\fq}(S\cap \alpha T)\leq 1$, for every $\alpha \in \fqn^*$.
\end{itemize}
\end{theorem}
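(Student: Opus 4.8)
The plan is to reduce everything to a single computation of the weight of an arbitrary point of $L_U$. Since $w_{L_U}(\langle(1,0)\rangle_{\fqn})=t$ and $w_{L_U}(\langle(0,1)\rangle_{\fqn})=s$, the remaining points are those of the form $\langle(\alpha,1)\rangle_{\fqn}$ with $\alpha\in\fqn^*$. A vector $(\lambda\alpha,\lambda)$ of $\langle(\alpha,1)\rangle_{\fqn}$ lies in $U=T\times S$ if and only if $\lambda\in S$ and $\lambda\in\alpha^{-1}T$, so that
\[
w_{L_U}(\langle(\alpha,1)\rangle_{\fqn})=\dim_{\fq}(S\cap\alpha^{-1}T),\qquad \alpha\in\fqn^*.
\]
As $\alpha^{-1}$ ranges over $\fqn^*$ this records the weights of all points distinct from $\langle(1,0)\rangle_{\fqn}$ and $\langle(0,1)\rangle_{\fqn}$. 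From this identity the equivalence (ii)$\Leftrightarrow$(iv) is immediate: condition (iv) says precisely that every such point has weight at most one, i.e.\ weight exactly one among the points actually belonging to $L_U$, which is (ii).

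For (iii)$\Leftrightarrow$(iv) I would argue through the negations, noting first that $\fq\subseteq S\cdot S^{-1}\cap T\cdot T^{-1}$ always holds, so what matters is whether this intersection is strictly larger than $\fq$. If $\dim_{\fq}(S\cap\alpha T)\geq 2$ for some $\alpha\in\fqn^*$, choose $\fq$-independent $u_1,u_2\in S\cap\alpha T$ and write $u_i=\alpha t_i$ with $t_i\in T$; then $u_1u_2^{-1}=t_1t_2^{-1}$ belongs to $S\cdot S^{-1}\cap T\cdot T^{-1}$ but not to $\fq$, since $u_1,u_2$ are independent. Conversely, given $\gamma=s_1s_2^{-1}=t_1t_2^{-1}\in(S\cdot S^{-1}\cap T\cdot T^{-1})\setminus\fq$ with $s_2,t_2\neq 0$, set $\alpha=s_2t_2^{-1}$; then $s_2=\alpha t_2$ and $s_1=\gamma s_2=\alpha t_1$ both lie in $S\cap\alpha T$ and are $\fq$-independent because $s_1/s_2=\gamma\notin\fq$, so $\dim_{\fq}(S\cap\alpha T)\geq 2$.

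It remains to prove (i)$\Leftrightarrow$(ii), for which I would invoke the counting relations \eqref{eq:pesicard} and \eqref{eq:pesivett}. Rewriting \eqref{eq:pesivett} as $\sum_{P\in L_U}(q^{w_{L_U}(P)-1}+\cdots+q+1)=\tfrac{q^k-1}{q-1}$ and subtracting \eqref{eq:pesicard} gives
\[
|L_U|=\frac{q^k-1}{q-1}-\sum_{P:\,w_{L_U}(P)\geq 2}\bigl(q^{w_{L_U}(P)-1}+\cdots+q\bigr).
\]
The distinguished points $\langle(1,0)\rangle_{\fqn}$ and $\langle(0,1)\rangle_{\fqn}$, of weights $t\geq s$, always contribute $(q^{t-1}+\cdots+q)+(q^{s-1}+\cdots+q)$ to the subtracted sum, and every additional point of weight $\geq 2$ strictly increases it. Hence $|L_U|$ is maximal exactly when these are the only points of weight at least two, that is, exactly under (ii); a direct expansion of $\tfrac{q^k-1}{q-1}$ then shows that this maximal value is the closed form $q^{k-1}+\cdots+q^{t}-q^{s-1}-\cdots-q+1$ appearing in (i). Combining the three equivalences yields (i)$\Leftrightarrow$(ii)$\Leftrightarrow$(iv)$\Leftrightarrow$(iii).

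The only delicate point I anticipate is the bookkeeping in this last step: matching the arithmetic of the maximal count to the stated closed form while correctly handling the boundary cases $s=1$ (where $\langle(0,1)\rangle_{\fqn}$ has weight one and contributes nothing to the subtracted sum, so the telescoping in $s$ is vacuous) and $s=t$. The other three implications are formal consequences of the weight identity of the first paragraph and require no further input.
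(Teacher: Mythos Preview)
Your proof is correct and essentially the same as the paper's: both use the counting relations \eqref{eq:pesicard}--\eqref{eq:pesivett} for (i)$\Leftrightarrow$(ii) and the same algebraic manipulation for (iii)$\Leftrightarrow$(iv). The only cosmetic difference is the routing: you link (ii) to (iv) directly via the weight identity $w_{L_U}(\langle(\alpha,1)\rangle_{\fqn})=\dim_{\fq}(S\cap\alpha^{-1}T)$, whereas the paper links (ii) to (iii) directly by noting that a nonzero $\lambda\in S\cdot S^{-1}\cap T\cdot T^{-1}$ corresponds precisely to a relation $(x',y')=\lambda(x,y)$ between two nonzero vectors of $U$.
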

\begin{proof}
Clearly, $w_{L_U}(\langle(1,0)\rangle_{\fqn})=t$ and $w_{L_U}(\langle(0,1)\rangle_{\fqn})=s$, so that $|L_U|\leq q^{k-1}+\ldots+q^{t}-q^{s-1}-\ldots-q+1$, by \eqref{eq:card}, \eqref{eq:pesicard} and \eqref{eq:pesivett}. The equivalence between i) and ii) follows from the fact that $|L_U|= q^{k-1}+\ldots+q^{t}-q^{s-1}-\ldots-q+1$ if and only if $w_{L_U}(P)=1$ for every $P \in L_U \setminus\{\langle(1,0)\rangle_{\fqn},\langle(0,1)\rangle_{\fqn}\}$.
Consider a nonzero $\lambda \in S\cdot S^{-1} \cap T\cdot T^{-1}$, then $\lambda=x'/x=y'/y$ for some nonzero $x,x' \in T$ and $y,y' \in S$, that is $(x',y')=\lambda(x,y)$. Hence ii) and iii) are equivalent.
Finally, suppose that $S\cdot S^{-1} \cap T \cdot T^{-1}=\F_q$.
By contradiction, assume the existence of $\alpha \in \F_{q^n}^*$ such that $\dim_{\fq}(S \cap \alpha T) \geq 2$.
Let $a,c \in S \cap \alpha T$ such that $a$ and $c$ are $\fq$-linearly independent.
Moreover, since $a,c \in \alpha T$, there exist $b,d \in T\setminus\{0\}$ such that $a=\alpha b$ and $c=\alpha d$.
Therefore, $ca^{-1}=db^{-1}$ is in $S\cdot S^{-1}\cap T\cdot T^{-1}$ and hence $ca^{-1} \in \fq$, by assumption.
This yields to a contradiction.
Now, assume that $\dim_{\fq}(S \cap \alpha T) \leq 1$ for each $\alpha \in \F_{q^n}^*$.
Let $\eta \in S\cdot S^{-1} \cap T \cdot T^{-1} \setminus\{0\}$. Then there exist $a,c \in S\setminus\{0\}$ and $b,d \in T\setminus\{0\}$ such that $\eta=ac^{-1}=db^{-1}$.
Then $a,c \in S \cap \beta T$ where $\beta=a/d=c/b$. Since $\dim_{\fq}(S \cap \beta T)\leq 1$, it follows that $\eta=ac^{-1} \in \F_q$.
\end{proof}

\begin{remark}
The condition on the subspaces $S$ and $T$ in the above result arises also in the context of cyclic subspace codes, see \cite{Roth}, which are possible candidates for large codes with efficient encoding and decoding algorithms.
\end{remark}

\begin{corollary}\label{cor:UasUST}
Let $L$ be an $\fq$-linear set of rank $k$ in $\PG(1,q^n)$ for which there exist two distinct points $P,Q \in L$ such that
$w_{L}(P)=s$, $w_{L}(Q)=t$, $s+t=k\leq n$ and $s,t>1$. 
If $P$ and $Q$ are the only points of $L$ of weight greater than one, then $s\leq \frac{n}2$ and $t\leq \frac{n}2$.
In particular, if $s+t=n$ $n$ is even and then $s=t=\frac{n}2$.
\end{corollary}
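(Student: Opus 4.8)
The plan is to reduce to the normal form of Section~\ref{sec:projection} and then exploit the intersection condition (iv) of Theorem~\ref{th:SS-1} through Grassmann's identity applied to two multiplicative shifts of $T$. First I would invoke Proposition~\ref{th:UasUST}: up to the action of $\mathrm{PGL}(2,q^n)$ we may assume $L=L_U$ with $U=T\times S$, where $\dim_{\fq}(T)=t$, $\dim_{\fq}(S)=s$, $T\cap S=\{0\}$, and $\langle(1,0)\rangle_{\fqn}$ has weight $t$ while $\langle(0,1)\rangle_{\fqn}$ has weight $s$; after relabelling $P,Q$ I may assume $t\ge s$. Since $\mathrm{\Gamma L}(2,q^n)$-equivalence preserves the whole weight distribution (Proposition~\ref{prop:weightinvariace}), the hypothesis that $P$ and $Q$ are the only points of weight greater than one is exactly condition (ii) of Theorem~\ref{th:SS-1}, hence is equivalent to condition (iv): $\dim_{\fq}(S\cap\alpha T)\le 1$ for every $\alpha\in\fqn^*$. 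It then suffices to prove $t\le n/2$, since $s\le t$ yields the bound on $s$ for free.

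The key step is the following contradiction argument. Suppose $t>n/2$, so that $2t>n$. As $s\ge 2$, choose $\fq$-linearly independent $e_1,e_2\in S$ and consider the two $t$-dimensional $\fq$-subspaces $e_1^{-1}T=\{e_1^{-1}v:v\in T\}$ and $e_2^{-1}T$. Grassmann's identity gives
\[
\dim_{\fq}\!\left(e_1^{-1}T\cap e_2^{-1}T\right)\ \ge\ 2t-n\ \ge\ 1,
\]
so there is a nonzero $\gamma\in e_1^{-1}T\cap e_2^{-1}T$. By construction $\gamma e_1$ and $\gamma e_2$ both lie in $T$, and they are $\fq$-independent because $\gamma\ne 0$; hence $\gamma e_1,\gamma e_2\in \gamma S\cap T$, so $\dim_{\fq}(\gamma S\cap T)\ge 2$, that is $\dim_{\fq}(S\cap\gamma^{-1}T)\ge 2$. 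This contradicts condition (iv) with $\alpha=\gamma^{-1}$. Therefore $t\le n/2$, and consequently $s\le t\le n/2$. For the final assertion, if $s+t=n$ then $n=s+t\le \tfrac n2+\tfrac n2=n$, so equality holds throughout, forcing $s=t=n/2$ and hence $n$ even.

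The main obstacle, conceptually, is to avoid the lossy route of directly estimating the size of $S\cdot T^{-1}$ (a crude count bounds it by $q^n-1$, which is far too weak to reach $n/2$) and instead to read condition (iv) as a statement about the shifts $e_i^{-1}T$: the threshold $2t>n$ is precisely what Grassmann needs in order to produce a common nonzero direction $\gamma$, and passing to a $2$-dimensional subspace of $S$ is exactly what converts the qualitative ``weight greater than one'' hypothesis into the sharp numerical bound. The only routine point to verify carefully is the $\fq$-independence of $\gamma e_1,\gamma e_2$, which is immediate from $\gamma\neq 0$ and the independence of $e_1,e_2$.
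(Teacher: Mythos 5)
Your proof is correct, and it reaches the bound by a genuinely different mechanism than the paper's, even though both arguments share the same skeleton (reduce to $U=T\times S$ via Proposition~\ref{th:UasUST}, then apply Theorem~\ref{th:SS-1} and argue by contradiction). The paper works with condition~(iii): assuming $s>n/2$, it forms the auxiliary linear set $L_{S\times S}$, which has rank $2s>n$ and therefore covers all of $\PG(1,q^n)$, forcing $S\cdot S^{-1}=\fqn$; condition~(iii) then collapses $T\cdot T^{-1}$ to $\fq$, which is impossible for $\dim_{\fq}(T)=t>1$. You instead work with condition~(iv) and apply Grassmann's identity to the two shifts $e_1^{-1}T$ and $e_2^{-1}T$ inside $\fqn$ viewed as an $n$-dimensional $\fq$-space, producing a common nonzero $\gamma$ with $\dim_{\fq}(S\cap\gamma^{-1}T)\ge 2$. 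Your route is more elementary in that it avoids the auxiliary linear set and the fact that a linear set of rank greater than $n$ is the whole line, replacing them by a pure dimension count; the paper's route has the small advantage of exhibiting the stronger intermediate fact $S\cdot S^{-1}=\fqn$ and of treating $S$ and $T$ by symmetric arguments rather than via the reduction to $t\ge s$. All the details you flag as needing care (the $\fq$-independence of $\gamma e_1,\gamma e_2$, the use of $s\ge 2$, and the passage from the weight hypothesis to condition~(ii) of Theorem~\ref{th:SS-1}) check out.
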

\begin{proof}
By Proposition \ref{th:UasUST}, up to equivalence, we can suppose $L=L_U$ with $U=T\times S$, where $S$ and $T$ are $\fq$-subspaces of $\fqn$ with $\dim_{\fq}(S)=s>1$, $\dim_{\fq}(T)=t>1$ and $s+t=k$.
Also, $S$ and $T$ satisfy iii) of Theorem \ref{th:SS-1}.

Now, suppose that $s> n/2$ and consider $W=S \times S$. Then the $\fq$-linear set $L_W$ has rank $2s>n$ and hence $|L_W|=q^n+1$. 
On the other side, $|L_W|=|S \cdot S^{-1}|+1$. So that $S \cdot S^{-1}=\fqn$ and by Theorem \ref{th:SS-1} it follows $T\cdot T^{-1}=\fq$, which is a contradiction since $t>1$.
Similar arguments can be performed on $T$.
The last part is then a trivial consequence.
\end{proof}

In \cite{DeBoeckVdV20} the authors study the weight distribution of linear sets in $\PG(1,q^5)$. In particular, they prove that linear sets of rank $5$ in $\PG(1,q^5)$ containing one point of weight $3$, one point of weight $2$ and all the others of weight one do not exist.
As a consequence of Corollary \ref{cor:UasUST} we prove the following generalization of this result when $n$ is odd.

\begin{theorem}
Let $L$ be an $\fq$-linear set of $\PG(1,q^n)$ with exactly two points $P$ and $Q$ of weight greater than one, then 
\[ w_L(P)\leq \frac{n}2 \,\,\text{and}\,\,w_L(Q)\leq \frac{n}2. \]
In particular, if $n$ is odd then $w_L(P)+w_L(Q)<n$.
Moreover, if $w_L(P)+w_L(Q)=n$ then $n$ is even and $w_L(P)=w_L(Q)=\frac{n}2$.
\end{theorem}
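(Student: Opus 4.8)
The plan is to reduce to the situation already analysed in Corollary \ref{cor:UasUST} by passing to a suitable $\fq$-subspace, and then to run the dimension-counting contradiction at the heart of that corollary, which in fact does not require any a priori bound on the rank.

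First, since $\mathrm{P\Gamma L}(2,q^n)$ is $3$-transitive on $\PG(1,q^n)$ and preserves the weight of points (Proposition \ref{prop:weightinvariace}), I would move $P$ and $Q$ to $\langle(0,1)\rangle_{\fqn}$ and $\langle(1,0)\rangle_{\fqn}$ respectively, writing $L=L_U$ for the defining $\fq$-subspace $U$. Setting $T\times\{0\}=U\cap\langle(1,0)\rangle_{\fqn}$ and $\{0\}\times S=U\cap\langle(0,1)\rangle_{\fqn}$, I obtain $\fq$-subspaces with $\dim_{\fq}T=t=w_L(Q)$ and $\dim_{\fq}S=s=w_L(P)$, both exceeding $1$ by hypothesis. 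The key reduction is to replace $U$ by the (possibly smaller) subspace $W=T\times S\subseteq U$. Since $W\subseteq U$, one has $L_W\subseteq L$ and $w_{L_W}(R)\le w_L(R)$ for every point $R$; consequently the two coordinate points still have weights $t$ and $s$ in $L_W$, while every other point of $L_W$ inherits weight at most one from $L$. Thus $L_W=L_{T\times S}$ is a linear set whose only points of weight greater than one are the two coordinate points.

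Next I would invoke the equivalence (ii)$\Leftrightarrow$(iii) of Theorem \ref{th:SS-1} applied to $W=T\times S$, which gives
\[ S\cdot S^{-1}\cap T\cdot T^{-1}=\fq. \]
I note that this part of Theorem \ref{th:SS-1} is a purely multiplicative statement about $S$ and $T$ and does not rely on the bound $\dim_{\fq}S+\dim_{\fq}T\le n$; this is exactly what lets me dispense with the hypothesis $s+t=k\le n$ present in Corollary \ref{cor:UasUST}. Now suppose, for a contradiction, that $s>n/2$. Then $2s>n$, so for every $c\in\fqn^*$ the Grassmann inequality $\dim_{\fq}(cS\cap S)\ge 2s-n\ge 1$ forces $cS\cap S\ne\{0\}$, whence $c\in S\cdot S^{-1}$; therefore $S\cdot S^{-1}=\fqn$. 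Combined with the displayed identity this yields $T\cdot T^{-1}=\fq$, i.e.\ $\dim_{\fq}T=1$, contradicting $t>1$. Hence $w_L(P)=s\le n/2$, and by the symmetric argument (interchanging the roles of $S$ and $T$) also $w_L(Q)=t\le n/2$.

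Finally, the remaining assertions follow by integrality. If $n$ is odd then $n/2\notin\mathbb{Z}$, so $s,t\le (n-1)/2$ and hence $w_L(P)+w_L(Q)\le n-1<n$. If instead $w_L(P)+w_L(Q)=n$, then the constraints $s\le n/2$ and $t\le n/2$ force $s=t=n/2$, which in turn requires $n$ to be even. The only real subtlety I anticipate is the reduction step: one must check that passing from $U$ to $W=T\times S$ genuinely preserves the ``exactly two heavy points'' property (so that Theorem \ref{th:SS-1}(iii) is available), and that the multiplicative characterisation there remains valid without the rank restriction $s+t\le n$. Once this is secured, the dimension count is immediate and the rest is bookkeeping.
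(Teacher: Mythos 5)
Your proof is correct and follows essentially the same route as the paper: restrict to the subspace $T\times S$ spanned by the two fibres over $P$ and $Q$ (the paper's $\overline{U}=W+V$), observe that weights can only decrease, and then run the $S\cdot S^{-1}\cap T\cdot T^{-1}=\fq$ dimension-count of Corollary \ref{cor:UasUST} to exclude $s>n/2$. You are in fact slightly more careful than the paper, which invokes Corollary \ref{cor:UasUST} without checking its hypothesis $s+t\le n$; your remark that the equivalence (ii)$\Leftrightarrow$(iii) of Theorem \ref{th:SS-1} and the Grassmann inequality do not need that bound closes this small gap.
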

\begin{proof}
Let $L=L_U$ and let $P=\langle w\rangle_{\F_{q^n}}$ and $Q=\langle v \rangle_{\F_{q^n}}$ such that $w_L(P)=t$ and $w_L(Q)=s$.
Let $W=\langle w\rangle_{\F_{q^n}} \cap U$ and $V=\langle v\rangle_{\F_{q^n}} \cap U$.
Then $L_{\overline{U}}$, where $\overline{U}=W+V$ is and $\fq$-linear set of $\PG(1,q^n)$ of rank $h=\dim_{\fq}(W)+\dim_{\fq}(V)=t+s$ and by construction $w_{L_{\overline{U}}}(P)=t$ and $w_{L_{\overline{U}}}(Q)=s$.
Since $\overline{U}$ is contained in $U$, for any $T \in L_{\overline{U}}$ we have $w_{L_{\overline{U}}}(T)\leq w_{L_U}(T)$.
Hence, $P$ and $Q$ are the only points of $L_{\overline{U}}$ of weight greater than one, so by Corollary \ref{cor:UasUST}, we obtain $t\leq \frac{n}2$ and $s\leq \frac{n}2$.
\end{proof}

Now, our aim is to construct examples of $\fq$-linear sets of $\PG(1,q^n)$ of rank $k$ attaining the upper bound in \eqref{eq:bound}. 

\begin{theorem}
Let $\F_{q^t}$ be a subfield of $\fqn$ with $1<t<n$ and let $n=t\ell$.
There exist $\fq$-linear sets of rank $k$ in $\PG(1,q^n)$ with one point of weight $t$, one point of weight $s$ and all others of weight one for the following values of $n$, $k$ and $s$:
\begin{itemize}
    \item $n$ even, $k=t+s$ and any $s \in \{1,\ldots,\frac{n}2\}$;
    \item $n$ odd, $k=t+s$ and any $s \in \{1,\ldots,\frac{n-t}2\}$.
\end{itemize}
In particular, if $n$ is even and $t=\frac{n}2$, there exist $\fq$-linear sets of rank $n$ in $\mathrm{PG}(1,q^n)$ with exactly two points of weight greater than one and both have weight $\frac{n}2$.
\end{theorem}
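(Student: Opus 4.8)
The plan is to realise every prescribed weight distribution through the characterisation in Theorem \ref{th:SS-1}, taking $T=\F_{q^t}$ (so that $\dim_{\fq}(T)=t$ and $\langle(1,0)\rangle_{\fqn}$ has weight $t$) and choosing $S$ to be a suitable scattered $\fq$-subspace. The first step is to note that, since $T=\F_{q^t}$ is a field, $T\cdot T^{-1}=\F_{q^t}$; condition iii) of Theorem \ref{th:SS-1} therefore reads $S\cdot S^{-1}\cap\F_{q^t}=\fq$, which by condition iv) amounts to $\dim_{\fq}(S\cap\alpha\F_{q^t})\le 1$ for every $\alpha\in\fqn^*$. Viewing $\fqn$ as the $\ell$-dimensional $\F_{q^t}$-space $\F_{q^t}^{\ell}$, this is exactly the requirement that $S$ be a scattered $\fq$-subspace in $\PG(\ell-1,q^t)$. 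The whole problem thus reduces to exhibiting, for each admissible $s$, a scattered $\fq$-subspace of $\F_{q^t}^{\ell}$ of dimension $s$: for such an $S$ the (symmetric) condition iv) holds for $S$ and $T$, so with $U=T\times S$ every point of $L_U$ other than $\langle(1,0)\rangle_{\fqn}$ and $\langle(0,1)\rangle_{\fqn}$ has weight one, while these two carry weights $t$ and $s$.

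The two parities then invoke Theorem \ref{th:existscattered} differently. If $n$ is even, $\ell t=n$ is even and there is a scattered $\fq$-subspace of $\F_{q^t}^{\ell}$ of maximal rank $\tfrac{\ell t}2=\tfrac n2$; passing to $\fq$-subspaces, which remain scattered, yields $S$ of every dimension $s\in\{1,\ldots,\tfrac n2\}$. If $n$ is odd then $t$ and $\ell$ are both odd with $\ell\ge 3$, so $\ell t$ is odd and $\tfrac{\ell t}2$ is not an integer; here I would instead fix an $\F_{q^t}$-subspace $W'$ of $\F_{q^t}^{\ell}$ of codimension one over $\F_{q^t}$ and complementary to $T=\F_{q^t}$, and take $S\subseteq W'$ scattered in $\PG(\ell-2,q^t)$. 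Since $\ell-1$ is even, $(\ell-1)t=n-t$ is even, and Theorem \ref{th:existscattered} supplies such an $S$ of rank up to $\tfrac{(\ell-1)t}2=\tfrac{n-t}2$; subspaces then cover all $s\in\{1,\ldots,\tfrac{n-t}2\}$. The final assertion is the case $n$ even, $t=\tfrac n2$, $\ell=2$, $s=\tfrac n2$, where $U=T\times S$ has rank $n$, both $\langle(1,0)\rangle_{\fqn}$ and $\langle(0,1)\rangle_{\fqn}$ have weight $\tfrac n2$, and every other point has weight one.

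The one step demanding genuine care, and the main obstacle, is to confirm that an $S$ chosen scattered inside the complement $W'$ stays scattered against every $\F_{q^t}$-line of the full space $\F_{q^t}^{\ell}$, not merely those lying in $W'$. I would verify this by inspecting a line $\alpha\F_{q^t}$: if $\alpha\F_{q^t}=T$ then $S\cap T\subseteq W'\cap T=\{0\}$; if $\alpha\F_{q^t}\subseteq W'$ the scatteredness of $S$ in $W'$ gives $\dim_{\fq}(S\cap\alpha\F_{q^t})\le 1$ directly; and otherwise $\alpha\F_{q^t}$ meets the $\F_{q^t}$-subspace $W'$ in a proper, hence trivial, $\F_{q^t}$-subspace, forcing $S\cap\alpha\F_{q^t}=\{0\}$. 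In each case the $\fq$-dimension of the intersection is at most one, so condition iv) of Theorem \ref{th:SS-1} holds throughout $\F_{q^t}^{\ell}$ and the stated weight distribution follows.
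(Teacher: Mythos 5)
Your argument is correct, and for the even case it coincides with the paper's proof: take $T=\F_{q^t}$, observe that condition iv) of Theorem \ref{th:SS-1} becomes scatteredness of $S$ with respect to $\F_{q^t}$ in $\PG(\ell-1,q^t)$, and invoke Theorem \ref{th:existscattered} together with the fact that subspaces of scattered subspaces are scattered. Where you genuinely diverge is the odd case. The paper simply cites the explicit construction of Example \ref{ex:scattered}, which lives directly in $\PG(\ell-1,q^t)$ and in fact supplies scattered subspaces of rank up to $\frac{(\ell-1)t}{2}+1=\frac{n-t}{2}+1$, one more than the range actually claimed. You instead pass to an $\F_{q^t}$-hyperplane $W'$ complementary to $\F_{q^t}\cdot 1$, apply Theorem \ref{th:existscattered} inside $\PG(\ell-2,q^t)$ (legitimate since $(\ell-1)t$ is even), and then check that scatteredness in $W'$ propagates to all $\F_{q^t}$-lines of $\F_{q^t}^{\ell}$; your case analysis (line inside $W'$ versus line meeting $W'$ trivially) is exactly the right verification and is the step the paper's route avoids. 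The trade-off is that your argument is self-contained modulo Theorem \ref{th:existscattered} and does not need the ad hoc Example \ref{ex:scattered}, at the cost of the extra propagation lemma and of reaching only $s\le\frac{n-t}{2}$ rather than the slightly larger range the paper's example would permit. Both proofs establish the statement as written.
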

\begin{proof}
Let $T=\F_{q^t}$ and let $S$ be an $\fq$-subspace of $\fqn$ such that $S$ is scattered with respect to $\F_{q^t}$, i.e. $S$ defines in $\PG(\fqn,\F_{q^t})=\PG(\ell-1,q^t)$ a scattered $\fq$-linear set.
This means that $\dim_{\fq}(S\cap \alpha \F_{q^t})\leq 1$ for every $\alpha \in \fqn^*$.
So, by Theorem \ref{th:SS-1}, $U=\F_{q^t}\times S$ defines in $\PG(1,q^n)$ an $\fq$-linear set of rank $k=t+s$, where $s=\dim_{\fq}(S)$, having one point of weight $t$, one point of weight $s$ and all the others of weight one.
If $n$ is even, by Theorem \ref{th:existscattered} there exist scattered $\fq$-linear sets of rank $s$ in $\PG (\ell-1,q^t)$ for any $s\in \{1,\ldots,\frac{n}2\}$.
If $n$ is odd, by Example \ref{ex:scattered} there exist scattered $\fq$-linear sets of rank $s$ in $\PG (\ell-1,q^t)$ for any $s\in \{1,\ldots,\frac{n-t}2\}$.
\end{proof}

Now we are able to determine a canonical form for $\fq$-subspaces representing linear sets of rank $n=2t$ having two points of weight $t$ and all the other points of weight one.

\begin{theorem} \label{th:relationfandg}
Let $L$ be an $\fq$-linear set of $\PG(1,q^{2t})$ of rank $n=2t$ admitting two points $P$ and $Q$ of weight $t$. Then $L$ is equivalent to an $\fq$-linear set $L_U$ where
\[ U=T_{g,\eta} \times S_{f,\xi}=\{ (v+\eta g(v),u+\xi f(u)) \colon u,v \in \F_{q^t} \}, \]
with $\eta,\xi \in \F_{q^{2t}}\setminus\F_{q^t}$, $\xi^2=a\xi+b$, $\eta=A\xi+B$ with $a,b,A,B \in \F_{q^t}$ and $f(x),g(x) \in \mathcal{L}_{t,q}$.
Also, $P$ and $Q$ are the only points of $L$ with weight greater than one if and only if 
\begin{equation} \label{eq:relationfandg}
f(\alpha_0 v)+f(\alpha_1 A b g(v))+f(\alpha_0 B g(v))=\alpha_1 v+\alpha_0 A g(v)+\alpha_1 A a g(v)+\alpha_1 B g(v),
\end{equation}
has at most $q$ solutions in $v$ for every $\alpha_0,\alpha_1 \in \F_{q^t}$ with $(\alpha_0,\alpha_1)\ne(0,0)$.
\end{theorem}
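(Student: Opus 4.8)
The plan is to first normalise $L$ to the shape $U=T\times S$ via the complementary-weights reduction, then to choose convenient $\F_{q^t}$-bases so that $S$ and $T$ become graphs of $q$-polynomials, and finally to convert the weight condition (through Theorem \ref{th:SS-1}) into \eqref{eq:relationfandg} by a direct coordinate computation in the basis $\{1,\xi\}$.

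Since $P$ and $Q$ have complementary weights $t+t=n$, Proposition \ref{th:UasUST} lets me assume, up to $\mathrm{PGL}(2,q^{2t})$-equivalence, that $L=L_U$ with $U=T\times S$, $\dim_{\fq}T=\dim_{\fq}S=t$, $P=\langle(1,0)\rangle_{\F_{q^{2t}}}$, $Q=\langle(0,1)\rangle_{\F_{q^{2t}}}$ and $T\cap S=\{0\}$, so $\F_{q^{2t}}=T\oplus S$. To write $S$ as $S_{f,\xi}$ I would pick $\xi\in\F_{q^{2t}}\setminus\F_{q^t}$ with $S\cap\xi\F_{q^t}=\{0\}$; such $\xi$ exists because the $\F_{q^t}$-lines meeting $S$ nontrivially are exactly the points of the rank-$t$ linear set $L_S$ of $\PG(1,q^t)$, and $|L_S|+1\le\frac{q^t-1}{q-1}+1<q^t+1$ leaves an $\F_{q^t}$-line distinct from $\F_{q^t}$ that is disjoint from $S$. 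Then $\{1,\xi\}$ is an $\F_{q^t}$-basis, $\xi^2=a\xi+b$ for some $a,b\in\F_{q^t}$, the projection of $S$ onto the first coordinate is an $\fq$-isomorphism onto $\F_{q^t}$, and writing the unique element of $S$ with first coordinate $u$ as $u+\xi f(u)$ produces an $\fq$-linear (hence $q$-polynomial) map $f\in\mathcal{L}_{t,q}$ with $S=S_{f,\xi}$. The same argument applied to $T$ gives $\eta\in\F_{q^{2t}}\setminus\F_{q^t}$ and $g\in\mathcal{L}_{t,q}$ with $T=T_{g,\eta}$; expanding $\eta$ in the basis $\{1,\xi\}$ yields $\eta=A\xi+B$ with $A,B\in\F_{q^t}$ and $A\ne 0$ (as $\eta\notin\F_{q^t}$).

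For the second assertion, I would fix $\alpha=\alpha_0+\alpha_1\xi\in\F_{q^{2t}}^*$ and describe $S\cap\alpha T$. Using $\eta=A\xi+B$, an element $v+\eta g(v)$ of $T$ has coordinates $(v+Bg(v),\,Ag(v))$ in $\{1,\xi\}$; multiplying by $\alpha$ and reducing with $\xi^2=a\xi+b$ shows that $\alpha(v+\eta g(v))$ has first coordinate $\alpha_0 v+\alpha_0 Bg(v)+\alpha_1 Ab\,g(v)$ and second coordinate $\alpha_0 Ag(v)+\alpha_1 v+\alpha_1 Bg(v)+\alpha_1 Aa\,g(v)$. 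Since $S=S_{f,\xi}$ consists exactly of the elements whose second coordinate is the $f$-image of their first coordinate, and $f$ is additive, imposing $\alpha(v+\eta g(v))\in S$ is precisely equation \eqref{eq:relationfandg}. The map $v\mapsto\alpha(v+\eta g(v))$ is an $\fq$-linear bijection from $\F_{q^t}$ onto $\alpha T$ (injective because $v\mapsto v+\eta g(v)$ is, using $A\ne 0$), so it restricts to an $\fq$-isomorphism between the solution space of \eqref{eq:relationfandg} and $S\cap\alpha T$. Hence $\dim_{\fq}(S\cap\alpha T)$ equals the $\fq$-dimension of the solution space of \eqref{eq:relationfandg}, and $\dim_{\fq}(S\cap\alpha T)\le 1$ is equivalent to \eqref{eq:relationfandg} having at most $q$ solutions.

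To conclude, I would invoke the equivalence of (ii) and (iv) in Theorem \ref{th:SS-1}: as $s=t$ here, $P$ and $Q$ are the only points of $L_U$ of weight greater than one if and only if $\dim_{\fq}(S\cap\alpha T)\le 1$ for every $\alpha\in\F_{q^{2t}}^*$, which by the previous paragraph is exactly the requirement that \eqref{eq:relationfandg} have at most $q$ solutions for every $(\alpha_0,\alpha_1)\ne(0,0)$. The main obstacle is the bookkeeping in the coordinate computation: one must expand $\alpha(v+\eta g(v))$ correctly using both $\xi^2=a\xi+b$ and the additivity (but \emph{not} $\F_{q^t}$-linearity) of $f$, and verify that the membership condition matches \eqref{eq:relationfandg} term by term. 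By contrast, the existence of a suitable $\xi$ (and $\eta$) is a minor counting point and the reduction to $U=T\times S$ is immediate from Proposition \ref{th:UasUST}.
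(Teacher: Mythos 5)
Your proposal is correct and follows essentially the same route as the paper: reduce to $U=T\times S$ via Proposition \ref{th:UasUST}, realise $S$ and $T$ as graphs $S_{f,\xi}$, $T_{g,\eta}$ over $\F_{q^t}$, expand $\alpha(v+\eta g(v))$ in the basis $\{1,\xi\}$ to identify $S\cap\alpha T$ with the solution space of \eqref{eq:relationfandg}, and conclude by Theorem \ref{th:SS-1}. The only (immaterial) difference is that you choose $\xi,\eta$ adapted to $S,T$ by a counting argument, whereas the paper fixes $\xi,\eta$ and rescales $S,T$ by suitable $\lambda_1,\lambda_2$.
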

\begin{proof}
By Proposition \ref{th:UasUST}, $L$ is $\mathrm{PGL}(2,q^n)$-equivalent to $L_U$ where $U=S\times T$, $S$ and $T$ are $\fq$-subspaces of $\fqn$ and $\dim_{\fq}(S)=\dim_{\fq}(T)=t$.
Let $\eta,\xi \in \F_{q^{2t}}\setminus \F_{q^t}$. Arguing as in the last part of Proposition \ref{th:UasUST} there exist $\lambda_1$ and $\lambda_2$ in $\fqn^*$ such that
\[ \lambda_1S \cap \xi \F_{q^t}=\lambda_2 T \cap \eta \F_{q^t} =\{0\}. \]
This means that 
\[ \lambda_1 S=S_{f,\xi}=\{ u+\xi f(u) \colon u \in \F_{q^t} \}, \]
\[ \lambda_2 T=T_{g,\eta}=\{ u+\eta g(u) \colon u \in \F_{q^t} \}, \]
for some $q$-polynomials $f(x)$ and $g(x)$ in $\mathcal{L}_{t,q}$.
Consider $w \in S_{f,\xi} \cap \alpha T_{g,\eta}$, for some $\alpha \in \F_{q^{2t}}^*$.
Then there exist $u$ and $v \in \F_{q^t}$ such that
\begin{equation}\label{eq:uvfg} w=u+\xi f(u)=\alpha(v+\eta g(v)). \end{equation}
Since $\{1,\xi\}$ is an $\F_{q^t}$-basis of $\F_{q^{2t}}$, $\xi^2=a\xi+b$, $\eta=A\xi+B$, we obtain $\alpha=\alpha_0+\alpha_1\xi$ for some $\alpha_0,\alpha_1 \in \F_{q^t}$, and \eqref{eq:uvfg} reads as follows
\[ 
\left\{
\begin{array}{ll}
u=\alpha_0 v +\alpha_1 A b g(v)+\alpha_0 B g(v),\\
f(u)=\alpha_1 v +\alpha_0 A g(v)+\alpha_1 A a g(v)+\alpha_1 B g(v).
\end{array}
\right.
\]
By replacing the first equation of the above system into the second one, we obtain
\[ f(\alpha_0 v) + f(\alpha_1 A b g(v))+f(\alpha_0 B g(v))=\alpha_1 v +\alpha_0 A g(v)+\alpha_1 A a g(v)+\alpha_1 B g(v). \]
The assertion then follows from Theorem \ref{th:SS-1}.
\end{proof}

As a corollary we obtain several examples.

\begin{corollary}\label{cor:ex1}
Let $g(x)=\mu x$ with $\mu \in \F_{q^t}$ and let $f(x) \in \mathcal{L}_{t,q}$ be a $q$-polynomial.
Let
\[ U=T_{g,\eta} \times S_{f,\xi}=\{ (v+\eta \mu v,u+\xi f(u)) \colon u,v \in \F_{q^t} \}\subseteq \F_{q^{2t}}\times \F_{q^{2t}}, \]
with $\xi,\eta \in \F_{q^{2t}}\setminus\F_{q^t}$. 
Then $L_U$ has exactly two points of weight greater than one if and only if $f(x)$ is a scattered $q$-polynomial in $\mathcal{L}_{t,q}$.
\end{corollary}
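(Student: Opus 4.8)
The plan is to exploit the fact that taking $g(x)=\mu x$ collapses the subspace $T_{g,\eta}$ to a single one-dimensional $\F_{q^t}$-subspace, after which the characterisation in Theorem~\ref{th:SS-1} becomes precisely the requirement that $S_{f,\xi}$ be scattered with respect to $\F_{q^t}$. Indeed, $T_{g,\eta}=\{v+\eta\mu v\colon v\in\F_{q^t}\}=(1+\eta\mu)\F_{q^t}$, and since $\eta\notin\F_{q^t}$ while $\mu\in\F_{q^t}$ we have $1+\eta\mu\neq 0$ (if $\mu=0$ then $T_{g,\eta}=\F_{q^t}$, and if $\mu\neq 0$ then $\eta\mu\notin\F_{q^t}$). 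Thus $T_{g,\eta}$ is a single $\F_{q^t}$-line through the origin of the $2$-dimensional $\F_{q^t}$-space $\F_{q^{2t}}$, whereas $S_{f,\xi}$ remains $t$-dimensional over $\fq$.

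The second step applies Theorem~\ref{th:SS-1} to $U=T_{g,\eta}\times S_{f,\xi}$ (here $k=2t=n$ and both base points have weight $t$): the points $\langle(1,0)\rangle_{\F_{q^{2t}}}$ and $\langle(0,1)\rangle_{\F_{q^{2t}}}$ are the only ones of weight greater than one if and only if $\dim_{\fq}(S_{f,\xi}\cap\alpha T_{g,\eta})\leq 1$ for every $\alpha\in\F_{q^{2t}}^*$. Because $T_{g,\eta}=(1+\eta\mu)\F_{q^t}$ is a line, as $\alpha$ runs over $\F_{q^{2t}}^*$ the product $\alpha(1+\eta\mu)$ runs over all of $\F_{q^{2t}}^*$, so the family $\{\alpha T_{g,\eta}\colon\alpha\in\F_{q^{2t}}^*\}$ is exactly the set of all one-dimensional $\F_{q^t}$-subspaces of $\F_{q^{2t}}$. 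Hence the condition above is equivalent to $\dim_{\fq}(S_{f,\xi}\cap W)\leq 1$ for every $\F_{q^t}$-line $W$, that is, to $S_{f,\xi}$ being scattered with respect to $\F_{q^t}$.

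Finally I would identify $S_{f,\xi}$ with the graph of $f$: writing elements of $\F_{q^{2t}}$ in the $\F_{q^t}$-basis $(1,\xi)$, the element $u+\xi f(u)$ has coordinates $(u,f(u))$, so $S_{f,\xi}$ corresponds to $U_f=\{(u,f(u))\colon u\in\F_{q^t}\}$ and $L_{S_{f,\xi}}=L_f$ in $\PG(1,q^t)$. By the fact recalled in the Preliminaries, $S_{f,\xi}$ is scattered with respect to $\F_{q^t}$ exactly when $L_f$ is a scattered $\fq$-linear set, i.e.\ when $f(x)$ is a scattered $q$-polynomial. Chaining these equivalences yields the claim.

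A purely computational variant instead substitutes $g(v)=\mu v$ directly into \eqref{eq:relationfandg}: using the $\fq$-linearity of $f$, the left-hand side collapses to $f(cv)$ and the right-hand side to $dv$, where $c=\alpha_0(1+B\mu)+\alpha_1 Ab\mu$ and $d=\alpha_0 A\mu+\alpha_1(1+Aa\mu+B\mu)$. The only point needing care is that the $\F_{q^t}$-linear map $(\alpha_0,\alpha_1)\mapsto(c,d)$ is invertible; a short computation shows its determinant equals $\N_{q^{2t}/q^t}(1+\eta\mu)\neq 0$, so $(c,d)$ ranges over all nonzero pairs, and after dividing by $c$ (when $c\neq 0$) the requirement that $f(cv)=dv$ have at most $q$ solutions for all admissible parameters becomes $\dim_{\fq}(\ker(f(x)-mx))\leq 1$ for every $m\in\F_{q^t}$, which is Sheekey's scatteredness criterion. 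The only mild obstacle in either route is conceptual, namely recognising the line structure of $T_{g,\eta}$ (equivalently, verifying $\det=\N_{q^{2t}/q^t}(1+\eta\mu)$ in the computational version); everything else is a routine unwinding of definitions.
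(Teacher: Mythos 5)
Your argument is correct and follows essentially the same route as the paper: the paper likewise observes that $T_{g,\eta}=(1+\mu\eta)\F_{q^t}$, normalizes to $\mu=0$ by the $\mathrm{GL}(2,q^n)$-equivalence given by the scalar $\delta=1+\mu\eta$, and then reads off from Equation \eqref{eq:relationfandg} (reduced to $f(\alpha_0 v)=\alpha_1 v$) that the condition is exactly scatteredness of $f$. Your first route phrases the same observation geometrically via condition iv) of Theorem \ref{th:SS-1} and your computational variant keeps $\mu$ general with the determinant $\N_{q^{2t}/q^t}(1+\eta\mu)\neq 0$, but both are minor repackagings of the paper's proof.
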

\begin{proof}
First we observe that we may assume $\mu=0$, i.e. $T=\F_{q^t}$. Indeed, let $\delta=1+\mu \eta$, then 
$U$ is $\mathrm{GL}(2,q^n)$-equivalent to
\[ U'=\delta^{-1}T_{g,\eta}\times S_{f,\xi}=\F_{q^t}\times S_{f,\xi}=T_{0,\eta}\times S_{f,\xi}, \]
and $\dim_{\fq}(S_{f,\xi} \cap \alpha T_{g,\eta})\leq 1$ for every $\alpha \in \F_{q^{2t}}$ if and only if $\dim_{\fq}(S_{f,\xi} \cap \alpha T_{0,\eta})\leq 1$ for every $\alpha \in \F_{q^{2t}}$.
Hence, let $g(x)=0$, applying Theorem \ref{th:relationfandg} Equation \eqref{eq:relationfandg} reads
\[ f(\alpha_0 v)=\alpha_1 v. \]
It has at most $q$ solutions in $\F_{q^{t}}$ for every $\alpha_0,\alpha_1 \in \F_{q^t}$ if and only if 
\[\dim_{\fq}(\ker(f(x)-\beta x))\leq 1\] for every $\beta \in \F_{q^t}$, that is $f(x)$ is a scattered $q$-polynomial in $\mathcal{L}_{t,q}$.
\end{proof}


Applying Theorem \ref{th:relationfandg} to the case $\eta=\xi$ and, $f(x)$ and $g(x)$ are monomials we obtain the following result.

\begin{corollary}\label{cor:ex2}
Let $s$ be a positive integer coprime with $t$, $f(x)=x^{q^s}$ and $g(x)=\mu x^{q^s}$ with $\mu \in \F_{q^t}^*$ such that $\N_{q^t/q}(\mu) \neq 1$ and $\N_{q^{t}/q}(-\xi^{q^t+1}\mu)\neq (-1)^t$, 
with $\xi \in \F_{q^{2t}}\setminus\F_{q^t}$.
Let
\[ U=T_{g,\xi} \times S_{f,\xi}=\{ (v+\xi \mu v^{q^s},u+\xi u^{q^s}) \colon u,v \in \F_{q^t} \}. \]
Then $L_U$ has exactly two points of weight greater than one.
\end{corollary}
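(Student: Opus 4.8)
The plan is to specialize the characterization in Theorem~\ref{th:relationfandg} to the monomial situation $f(x)=x^{q^s}$, $g(x)=\mu x^{q^s}$ with $\eta=\xi$, and to show that the resulting equation~\eqref{eq:relationfandg} has at most $q$ solutions in $v$ for every admissible pair $(\alpha_0,\alpha_1)$. Since $\eta=\xi$ we have $A=1$ and $B=0$, so the general equation simplifies considerably: substituting $f(x)=x^{q^s}$ and $g(x)=\mu x^{q^s}$, the left-hand side becomes $(\alpha_0 v)^{q^s}+(\alpha_1 b \mu v^{q^s})^{q^s}$ and the right-hand side becomes $\alpha_1 v+\alpha_0\mu v^{q^s}+\alpha_1 a \mu v^{q^s}$. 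Using that $\alpha_0,\alpha_1,a,b,\mu\in\F_{q^t}$ are fixed by raising to $q^s$, this is a $q$-polynomial equation in $v$ over $\F_{q^t}$, namely something of the form $c_2 v^{q^{2s}}+c_1 v^{q^s}+c_0 v=0$ after collecting terms. First I would write this out explicitly and identify the three coefficients $c_0,c_1,c_2$ as polynomial expressions in $\alpha_0,\alpha_1,a,b,\mu$.

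The key step is then to bound the $\F_q$-dimension of the kernel of this $q$-polynomial. Because $\gcd(s,t)=1$, the map $x\mapsto x^{q^s}$ generates the same Galois action as $x\mapsto x^{q}$, so this is genuinely a $\sigma$-polynomial of $\sigma$-degree at most $2$ with $\sigma\colon x\mapsto x^{q^s}$, and Theorem~\ref{Gow} applies with $k=2$: its kernel has $\F_q$-dimension at most $2$, and dimension exactly $2$ forces the norm condition $\N_{q^t/q}(c_0)=(-1)^{2t}\N_{q^t/q}(c_2)=\N_{q^t/q}(c_2)$ relating the constant and leading coefficients. The goal is to show the kernel dimension is at most $1$ for every $(\alpha_0,\alpha_1)\ne(0,0)$, i.e.\ that equation~\eqref{eq:relationfandg} has at most $q$ solutions, which by Theorem~\ref{th:relationfandg} is exactly the condition that $P$ and $Q$ are the only points of weight greater than one. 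So I would argue by contradiction: if for some $(\alpha_0,\alpha_1)$ the kernel had dimension $2$, then the norm equality $\N_{q^t/q}(c_0)=\N_{q^t/q}(c_2)$ must hold, and I would show that the two hypotheses $\N_{q^t/q}(\mu)\ne 1$ and $\N_{q^t/q}(-\xi^{q^t+1}\mu)\ne(-1)^t$ rule this out.

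The main obstacle, and the technical heart of the argument, is tracking the two excluded norm values and matching them to the two coefficient regimes. I expect the case $\alpha_1=0$ (equivalently $\alpha=\alpha_0\in\F_{q^t}$, so $\alpha$ lies in the subfield) to collapse the equation to one governed by $\N_{q^t/q}(\mu)$: here the degree-$2s$ term vanishes and one is left comparing $v\mapsto \alpha_0 v^{q^s}$ type terms, and the scattered-type condition reduces to $\N_{q^t/q}(\mu)\ne 1$. The complementary regime where $\alpha_1\ne 0$ should engage the leading coefficient $c_2$, whose norm—after simplification using $\xi^2=a\xi+b$ and hence $\xi^{q^t+1}=-b$ (the norm of $\xi$ from $\F_{q^{2t}}$ to $\F_{q^t}$, up to sign)—produces the quantity $\N_{q^t/q}(-\xi^{q^t+1}\mu)$, explaining the second hypothesis and the factor $(-1)^t$. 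The delicate bookkeeping will be to confirm that $\xi^{q^t+1}=-b$ and to verify that the norm equality forced by Theorem~\ref{Gow} reduces cleanly to exactly one of the two excluded identities in each regime, so that both hypotheses together guarantee kernel dimension at most $1$ in all cases. Once that contradiction is established across both regimes, the conclusion follows directly from Theorem~\ref{th:relationfandg}.
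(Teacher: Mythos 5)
Your proposal is correct and follows essentially the same route as the paper: specialize Theorem~\ref{th:relationfandg} with $\eta=\xi$ (so $A=1$, $B=0$) to get a $\sigma$-polynomial of $\sigma$-degree at most $2$ in $v$ with $\sigma\colon x\mapsto x^{q^s}$, handle $\alpha_1=0$ via $\N_{q^t/q}(\mu)\ne 1$, and handle $\alpha_1\ne 0$ via the norm condition of Theorem~\ref{Gow} together with $\xi^{q^t+1}=-b$, which yields exactly the excluded identity $\N_{q^t/q}(b\mu)=(-1)^t$. One small slip: elements of $\F_{q^t}$ are not \emph{fixed} by $x\mapsto x^{q^s}$ (they are merely kept inside $\F_{q^t}$), but since $\N_{q^t/q}(c^{q^s})=\N_{q^t/q}(c)$ this does not affect your argument.
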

\begin{proof}
Let $\xi^2=a\xi+b$, for some $a,b \in \F_{q^t}$, and note that $b=-\xi^{q^t+1}$.
Equation \eqref{eq:relationfandg} reads
\begin{equation}\label{eq:pseudoradici} \alpha_0^{q^s}v^{q^s}+b^{q^s}\alpha_1^{q^s}\mu^{q^s}v^{q^{2s}}-\alpha_1 v-\alpha_0 \mu v^{q^s}-\alpha_1 a \mu v^{q^s}=0, \end{equation}
and the left hand side of \eqref{eq:pseudoradici} is either a $q^s$-polynomial of $q^s$-degree at most $2$ or the zero polynomial.
If $\alpha_1=0$, then \eqref{eq:pseudoradici} becomes
\[ \alpha_0^{q^s}v^{q^s}-\alpha_0 \mu v^{q^s}=0, \]
and this implies that $\alpha_0^{q^s-1}=\mu$, which is not possible because of $\N_{q^t/q}(\mu)\ne 1$.
If $\alpha_1\ne 0$, the left hand side of \eqref{eq:pseudoradici} is a nonzero $q^s$-polynomial of $q^s$-degree two and if it admits $q^2$ roots in $v$ then Theorem \ref{Gow} implies $\N_{q^{t}/q}(\alpha_1^{1-q^s}b^{-q^s}\mu^{-q^s})=(-1)^t$, that is $\N_{q^t/q}(b\mu)=(-1)^t$ which is a contradiction.
\end{proof}

\subsection{Equivalence issue}

In this section we study the equivalence issue for the $\fq$-subspaces introduced in Corollaries \ref{cor:ex1} and \ref{cor:ex2}.

We will need the following result.

\begin{lemma}\label{lem:decomposition}
Let $\left(\begin{array}{cc} a_{11} & a_{12}\\ a_{21} & a_{22} \end{array}\right)\in \mathrm{GL}(2,q^t)$ and let $a,b \in \F_{q^t}$ such that $x^2-ax-b\in \F_{q^t}[x]$ is irreducible. 
Then there exist $\mu_0,\mu_1,\alpha,\beta \in \F_{q^t}$ such that $(\mu_0,\mu_1)\ne (0,0)$, $\alpha \ne 0$ and
\[ \left(\begin{array}{cc} a_{11} & a_{12}\\ a_{21} & a_{22} \end{array}\right) =\left(\begin{array}{cc} \mu_0 & \mu_1 b\\ \mu_1 & \mu_0+a \mu_1 \end{array}\right)\left(\begin{array}{cc} 1 & \beta\\ 0 & \alpha \end{array}\right), \]
i.e.\ $\left(\begin{array}{cc} a_{11} & a_{12}\\ a_{21} & a_{22} \end{array}\right)$ is the product of an element of the Singer subgroup of $\mathrm{GL}(2,q^t)$ determined by $x^2+ax-b$ and of an upper triangular matrix.
\end{lemma}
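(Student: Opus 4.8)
The statement asserts that any element $M = \left(\begin{smallmatrix} a_{11} & a_{12}\\ a_{21} & a_{22} \end{smallmatrix}\right)$ of $\mathrm{GL}(2,q^t)$ factors as the product of a matrix from the Singer cyclic subgroup attached to the irreducible polynomial $x^2 - ax - b$ and an upper triangular matrix. The plan is to exploit the structure of the Singer subgroup directly. A Singer subgroup here is the group of matrices $C(\mu_0,\mu_1) = \left(\begin{smallmatrix} \mu_0 & \mu_1 b\\ \mu_1 & \mu_0 + a\mu_1 \end{smallmatrix}\right)$; these are exactly the matrices representing multiplication by the element $\mu_0 + \mu_1 \theta$ of $\F_{q^{2t}} = \F_{q^t}[\theta]$, where $\theta$ is a root of $x^2 - ax - b$, acting on $\F_{q^{2t}}$ viewed as an $\F_{q^t}$-vector space with basis $\{1,\theta\}$. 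Since $x^2 - ax - b$ is irreducible, $C(\mu_0,\mu_1)$ is invertible for every $(\mu_0,\mu_1)\ne(0,0)$, and these matrices form a cyclic group of order $q^{2t}-1$.

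First I would reduce the factorization to a statement about the first column of $M$. Writing out the product $C(\mu_0,\mu_1)\left(\begin{smallmatrix} 1 & \beta\\ 0 & \alpha \end{smallmatrix}\right)$, the first column equals $(\mu_0,\mu_1)^{\mathrm t}$, so I am forced to take $\mu_0 = a_{11}$ and $\mu_1 = a_{21}$. Since $M \in \mathrm{GL}(2,q^t)$, its first column is nonzero, hence $(\mu_0,\mu_1)\ne(0,0)$, and by irreducibility $C(\mu_0,\mu_1)$ is invertible. The key observation is then that the remaining upper triangular factor is simply $C(\mu_0,\mu_1)^{-1}M$, and I must show this has the prescribed shape, namely lower-left entry zero and upper-left entry $1$.

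The heart of the argument is verifying these two entries. Computing $C(\mu_0,\mu_1)^{-1}M$, the first column is $C(\mu_0,\mu_1)^{-1}(\mu_0,\mu_1)^{\mathrm t} = (1,0)^{\mathrm t}$ automatically, since $(\mu_0,\mu_1)^{\mathrm t}$ is exactly the first column of $C(\mu_0,\mu_1)$. This delivers both the upper-left entry equal to $1$ and the lower-left entry equal to $0$ in one stroke, so the product is genuinely upper triangular of the required form. The entries $\beta$ and $\alpha$ are read off from the second column, and I would note that $\alpha \ne 0$ because $M$ and $C(\mu_0,\mu_1)$ are both invertible, forcing the triangular factor to be invertible and hence $\alpha = \det\!\left(C(\mu_0,\mu_1)^{-1}M\right)\ne 0$.

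The main obstacle is essentially bookkeeping rather than conceptual: one must confirm that the inverse $C(\mu_0,\mu_1)^{-1}$ again lies in the Singer subgroup (which is immediate, as the subgroup is a multiplicative group isomorphic to $\F_{q^{2t}}^*$), and carefully check that the irreducibility of $x^2 - ax - b$ is genuinely needed to guarantee $C(\mu_0,\mu_1)$ invertible for \emph{all} nonzero $(\mu_0,\mu_1)$ — indeed $\det C(\mu_0,\mu_1) = \mu_0^2 + a\mu_0\mu_1 - b\mu_1^2$ is the norm $\N_{q^{2t}/q^t}(\mu_0+\mu_1\theta)$, which vanishes only when $\mu_0+\mu_1\theta = 0$, i.e.\ only at the origin precisely because $\theta\notin\F_{q^t}$. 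Everything else is a short explicit multiplication.
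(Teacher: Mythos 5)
Your proof is correct and is essentially the paper's argument written in matrix language: choosing $\mu_0=a_{11}$, $\mu_1=a_{21}$ reproduces exactly the paper's factorization $f=\tau_{A+B}\circ\overline f$ of the corresponding $q^t$-polynomial $f(x)=Ax+Bx^{q^t}$, since $A+B=f(1)$ corresponds to the first column $(a_{11},a_{21})^{\mathrm t}$ of the matrix. The only difference is presentational — the paper passes through the isomorphism of $\mathrm{GL}(2,q^t)$ with the group of invertible maps $x\mapsto Ax+Bx^{q^t}$ on $\F_{q^{2t}}$, while you verify the same decomposition by direct matrix computation; both rest on the Singer subgroup acting simply transitively on nonzero vectors and the triangular factor being the stabilizer of $(1,0)^{\mathrm t}$.
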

\begin{proof}
Recall that $\mathrm{GL}(2,q^t)$ is isomorphic to the following subgroup of $\mathcal{L}_{2t,q}$
\[ \mathcal{G}=\{ f(x)=Ax+Bx^{q^{t}} \colon A,B \in \F_{q^{2t}}\,\,\text{and}\,\,f(x)\,\,\text{is invertible} \}/(x^{q^{2t}}-x). \]
Let $f(x)=Ax+Bx^{q^{t}}\in \mathcal{G}$ and note that, since $f(x)$ is invertible, $\N_{q^{2t}/q^t}(A)\ne \N_{q^{2t}/q^t}(B)$ which implies that $A+B \ne 0$, so we can write
\[f(x)=(A+B)\left(\frac{A}{A+B}x+\frac{B}{A+B}x^{q^{t}}\right)=\tau_{A+B}\circ \overline{f}(x),\] 
where $\tau_{A+B}(x)=(A+B)x$ and $\overline{f}(x)=\frac{A}{A+B}x+\frac{B}{A+B}x^{q^{t}}$.
Let $\mu=A+B$, $\eta \in \F_{q^{2t}}\setminus \F_{q^t}$ be a root of $x^2-ax-b$ and $\mu=\mu_0+\mu_1\eta$ with $\mu_0,\mu_1 \in \F_{q^t}$.
Then the matrix of $\mathrm{GL}(2,q^t)$ associated with $\overline{f}$ with respect to the $\F_{q^t}$-basis $(1,\eta)$ is
\[ \left(\begin{array}{cc} 1 & \beta\\ 0 & \alpha \end{array}\right) \]
where $\alpha,\beta \in \F_{q^t}$ such that
\[ \alpha \eta +\beta=\frac{A}{A+B}\eta + \frac{B}{A+B}\eta^{q^t}, \]
and the matrix associated with $\tau_{A+B}(x)$ is
\[ \left(\begin{array}{cc} \mu_0 & \mu_1 b\\ \mu_1 & \mu_0+a \mu_1 \end{array}\right). \]
Since $f(x)=\tau_{A+B} \circ \overline{f}(x)$, the assertion follows.
\end{proof}

\begin{theorem}
Let $n=2t$ be an even positive integer and let $\xi,\eta \in \fqn\setminus \F_{q^t}$. 
Let $S_{f,\xi}=\{z+\xi f(z) \colon z \in \F_{q^t}\}$ and $S_{g,\eta}=\{z+\eta g(z) \colon z \in \F_{q^t}\}$ where $f(z)$ and $g(z)$ are scattered $q$-polynomials in $\mathcal{L}_{t,q}$.
If $U=\F_{q^t}\times S_{f,\xi}$ and $U'=\F_{q^t}\times S_{g,\eta}$
are $\mathrm{\Gamma L}(2,q^{2t})$-equivalent then the $\fq$-subspaces $U_f=\{(x,f(x))\colon x \in \F_{q^t}\}$ and $U_g=\{(x,g(x))\colon x \in \F_{q^t}\}$ are $\mathrm{\Gamma L}(2,q^{t})$-equivalent in $\F_{q^t}\times \F_{q^t}$.
Conversely, if $U_f=\{(x,f(x))\colon x \in \F_{q^t}\}$ and $U_g=\{(x,g(x))\colon x \in \F_{q^t}\}$ are $\mathrm{\Gamma L}(2,q^{t})$-equivalent then for every $\eta \in \fqn\setminus \F_{q^t}$ there exists $\xi \in \fqn\setminus \F_{q^t}$ such that $U=\F_{q^t}\times S_{f,\xi}$ and $U'=\F_{q^t}\times S_{g,\eta}$ are $\mathrm{\Gamma L}(2,q^{2t})$-equivalent.
\end{theorem}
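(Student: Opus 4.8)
The plan is to reduce the statement to the equivalence criterion of Lemma~\ref{lem:criteria} and then to transport the resulting identity of $\fq$-subspaces of $\F_{q^{2t}}$, read off in an $\F_{q^t}$-basis $\{1,\eta\}$, into a matrix relating $U_f$ and $U_g$ over $\F_{q^t}$.

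First I would note that $U=\F_{q^t}\times S_{f,\xi}$ and $U'=\F_{q^t}\times S_{g,\eta}$ are the subspaces of Corollary~\ref{cor:ex1} in the case $\mu=0$ (so $T=\F_{q^t}$). As $f,g$ are scattered, that corollary shows $L_U,L_{U'}$ have exactly two points of weight greater than one; since $\dim_{\fq}(\F_{q^t})=\dim_{\fq}(S_{f,\xi})=t$ these are $\langle(1,0)\rangle_{\F_{q^{2t}}}$ and $\langle(0,1)\rangle_{\F_{q^{2t}}}$, both of weight $t$. We are therefore in the case $s=t$ of Lemma~\ref{lem:criteria}, so $U,U'$ are $\mathrm{\Gamma L}(2,q^{2t})$-equivalent iff either (a)~$S_{g,\eta}=\lambda S_{f,\xi}^{\rho}$ and $\F_{q^t}=\mu\,\F_{q^t}$, or (b)~$S_{g,\eta}=\lambda\,\F_{q^t}$ and $\F_{q^t}=\mu S_{f,\xi}^{\rho}$, for some $\lambda,\mu\in\F_{q^{2t}}^*$ and $\rho\in\Aut(\F_{q^{2t}})$. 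Identifying $\F_{q^{2t}}$ with $\F_{q^t}^2$ through $\{1,\xi\}$ shows $L_{S_{f,\xi}}=L_f$, a scattered $\fq$-linear set of rank $t>1$ in $\PG(1,q^t)$, and likewise $L_{S_{g,\eta}}=L_g$; each has more than one point, so $S_{g,\eta}=\lambda\,\F_{q^t}$ is impossible and (b) is excluded. Hence the equivalence is governed exactly by the existence of $\lambda\in\F_{q^{2t}}^*$, $\rho\in\Aut(\F_{q^{2t}})$ with $S_{g,\eta}=\lambda S_{f,\xi}^{\rho}$ (taking $\mu\in\F_{q^t}^*$ in (a)).

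Next I would make the translation explicit. Because $\rho$ commutes with the $q$-Frobenius, $S_{f,\xi}^{\rho}=S_{f^{\rho},\xi^{\rho}}=\{w+\xi^{\rho}f^{\rho}(w):w\in\F_{q^t}\}$, where $f^{\rho}$ denotes $f$ with its coefficients raised to $\rho$. Writing $\lambda=\lambda_0+\lambda_1\eta$, $\lambda\xi^{\rho}=\nu_0+\nu_1\eta$ and $\xi^{\rho}=c_0+c_1\eta$ in the basis $\{1,\eta\}$, the identity $S_{g,\eta}=\lambda S_{f^{\rho},\xi^{\rho}}$ read componentwise is exactly the statement that $M=\left(\begin{smallmatrix}\lambda_0 & \nu_0\\ \lambda_1 & \nu_1\end{smallmatrix}\right)$ maps $U_{f^{\rho}}=\{(w,f^{\rho}(w)):w\in\F_{q^t}\}$ onto $U_g$. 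A short computation with the minimal polynomial of $\eta$ over $\F_{q^t}$ gives $\det M=c_1\,\N_{q^{2t}/q^t}(\lambda)$; since $\xi^{\rho}\notin\F_{q^t}$ forces $c_1\neq0$ and $\lambda\neq0$ forces $\N_{q^{2t}/q^t}(\lambda)\neq0$, the matrix $M$ lies in $\mathrm{GL}(2,q^t)$. As $U_{f^{\rho}}=U_f^{\rho}$ for $\rho|_{\F_{q^t}}\in\Aut(\F_{q^t})$, this exhibits $U_f$ and $U_g$ as $\mathrm{\Gamma L}(2,q^t)$-equivalent, which proves the forward implication.

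For the converse I would reverse this reading. Given $N=\left(\begin{smallmatrix}\lambda_0 & \nu_0\\ \lambda_1 & \nu_1\end{smallmatrix}\right)\in\mathrm{GL}(2,q^t)$ and $\rho_0\in\Aut(\F_{q^t})$ with $N(U_f^{\rho_0})=U_g$, and any prescribed $\eta\in\F_{q^{2t}}\setminus\F_{q^t}$, I would extend $\rho_0$ to $\rho\in\Aut(\F_{q^{2t}})$, put $\lambda=\lambda_0+\lambda_1\eta$ (nonzero since the first column of $N$ is nonzero), then $\xi^{\rho}=\lambda^{-1}(\nu_0+\nu_1\eta)$ and $\xi=(\xi^{\rho})^{\rho^{-1}}$. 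The same determinant identity now reads $c_1=\det N/\N_{q^{2t}/q^t}(\lambda)\neq0$, so $\xi^{\rho}\notin\F_{q^t}$ and hence $\xi\notin\F_{q^t}$, as required. By construction the matrix built from $(\lambda,\xi^{\rho})$ is $N$, so reversing the coordinate translation yields $\lambda S_{f,\xi}^{\rho}=S_{g,\eta}$; choosing $\mu=1$ in Lemma~\ref{lem:criteria} then gives the $\mathrm{\Gamma L}(2,q^{2t})$-equivalence of $U$ and $U'$. The step I expect to be the true pivot is the determinant computation $\det M=c_1\,\N_{q^{2t}/q^t}(\lambda)$: it simultaneously supplies $M\in\mathrm{GL}(2,q^t)$ in the forward direction and the nondegeneracy $\xi\notin\F_{q^t}$ in the converse, and together with the scatteredness argument that rules out alternative (b) it is where the care must go.
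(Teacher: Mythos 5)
Your proof is correct and follows essentially the same route as the paper: reduce via Lemma \ref{lem:criteria} to the single condition $S_{g,\eta}=\lambda S_{f,\xi}^{\rho}$, then read this identity in the $\F_{q^t}$-basis $\{1,\eta\}$ to obtain the matrix linking $U_{f}$ and $U_g$. The only (cosmetic) difference is that you replace the paper's Lemma \ref{lem:decomposition} (the Singer-times-upper-triangular factorization) by the direct determinant identity $\det M=c_1\,\N_{q^{2t}/q^t}(\lambda)$, which encodes exactly the same decomposition and correctly supplies both invertibility in the forward direction and $\xi\notin\F_{q^t}$ in the converse.
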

\begin{proof}
Since $f(x)$ is not a constant  polynomial, then it is easy to see that $\F_{q^t}\ne \mu (S_{f,\xi})^\sigma$ for any $\mu \in \fqn^*$ and $\sigma \in \mathrm{Aut}(\fqn)$. So, Lemma \ref{lem:criteria} implies that $U$ is $\mathrm{\Gamma L}(2,q^{2t})$-equivalent to $U'$ if and only if there exist $\mu \in \fqn^*$ and $\sigma \in \mathrm{Aut}(\fqn)$ such that
\[ S_{g,\eta}=\mu (S_{f,\xi})^\sigma. \]
Assume that such $\mu$ and $\sigma$ exist, so for every $z \in \F_{q^t}$ there exists $y \in \F_{q^t}$ such that
\[ z+\eta g(z)=\mu (y^\sigma+\xi^\sigma f(y)^\sigma). \]
Let $\eta^2=a\eta +b$ with $a,b \in \F_{q^t}$, $\xi^\sigma=\alpha \eta+\beta$ with $\alpha,\beta \in \F_{q^t}$ and let $\mu=\mu_0+\mu_1\eta$ where $\mu_0,\mu_1 \in \F_{q^t}$.
Then the above equality yields
\[
\left\{
\begin{array}{ll}
z=\mu_0 y^\sigma +(\mu_0\beta +\mu_1\alpha b)f(y)^\sigma,\\
g(z)=\mu_1 y^\sigma+(\mu_0\alpha+\mu_1 \beta+a\mu_1\alpha)f(y)^\sigma,
\end{array}
\right.
\]
i.e.
\begin{equation}\label{eq:equivscatteredmatrix}
\left( 
\begin{array}{cc}
    \mu_0 & \mu_0\beta +\mu_1\alpha b \\
    \mu_1 & \mu_0\alpha + (\beta +a\alpha)\mu_1
\end{array}
\right)
\left( 
\begin{array}{cc}
    y^\sigma \\
    f(y)^\sigma
\end{array}
\right)
= 
\left( 
\begin{array}{cc}
    z \\
    g(z)
\end{array}
\right).
\end{equation}
Since the polynomial $x^2-ax-b \in \F_{q^t}[x]$ is irreducible and $\alpha\ne 0$, the matrix of the previous equality is invertible and hence $U_f$ and $U_g$ are $\mathrm{\Gamma L}(2,q^t)$-equivalent.
Note that 
\[  
\left( 
\begin{array}{cc}
    \mu_0 & \mu_0\beta +\mu_1\alpha b \\
    \mu_1 & \mu_0\alpha + (\beta +a\alpha)\mu_1
\end{array}
\right)=
\left( 
\begin{array}{cc}
    \mu_0 &  \mu_1 b \\
    \mu_1 & \mu_0+a\mu_1
\end{array}
\right)
\left( 
\begin{array}{cc}
    1 & \beta \\
    0 & \alpha
\end{array}
\right).
\]
Conversely, suppose that $U_f$ and $U_g$ are $\mathrm{\Gamma L}(2,q^{t})$-equivalent, then for every $x \in \F_{q^t}$ there exists $y \in \F_{q^t}$ such that
\[  
\left( 
\begin{array}{cc}
    a_{11} & a_{12} \\
    a_{21} & a_{22}
\end{array}
\right)
\left( 
\begin{array}{cc}
    x^\sigma \\
    f(x)^\sigma
\end{array}
\right)
=
\left( 
\begin{array}{cc}
    y \\
    g(y)
\end{array}
\right),
\]
where $\left( 
\begin{array}{cc}
    a_{11} & a_{12} \\
    a_{21} & a_{22}
\end{array}
\right) \in \mathrm{GL}(2,q^{t})$ and $\sigma \in \mathrm{Aut}(\F_{q^t})$.
By Lemma \ref{lem:decomposition}, for a fixed element $\eta \in \fqn\setminus \F_{q^t}$ such that $\eta^2=a\eta+b$, there exist $\mu_0,\mu_1,\alpha,\beta \in \F_{q^t}$ such that
\[ \left( 
\begin{array}{cc}
    a_{11} & a_{12} \\
    a_{21} & a_{22}
\end{array}
\right) = 
\left( 
\begin{array}{cc}
    \mu_0 &  \mu_1 b \\
    \mu_1 & \mu_0+a\mu_1
\end{array}
\right)
\left( 
\begin{array}{cc}
    1 & \beta \\
    0 & \alpha
\end{array}
\right).
\]
Let $\xi=(\alpha \eta+\beta)^{\sigma^{-1}}$ and $\mu=\mu_0+\mu_1\eta$, then by \eqref{eq:equivscatteredmatrix} we have
\[ S_{g,\eta}=\mu (S_{f,\xi})^\sigma, \]
hence $U$ and $U'$ are $\mathrm{\Gamma L}(2,q^n)$-equivalent and the assertion then follows.
\end{proof}

\begin{remark}
By the previous theorem, we can consider scattered $q$-polynomials in $\mathcal{L}_{t,q}$ defining $\Gamma\mathrm{L}(2,q^t)$-inequivalent subspaces to obtain $\mathrm{\Gamma L}(2,q^{2t})$-inequivalent subspaces as in the above result.
In Table \ref{scattpoly} we resume all the $q$-polynomials defining $\Gamma\mathrm{L}(2,q^t)$-inequivalent subspaces of $\F_{q^t}\times \F_{q^t}$.
\end{remark}

\begin{table}[htp]
\tabcolsep=0.2 mm
\begin{tabular}{|c|c|c|c|c|}
\hline
\hspace{0.2cm}$t$\hspace{0.2cm} & $f(x)$ & \mbox{conditions} & \mbox{references} \\ \hline
& $x^{q^s}$ & $\gcd(s,t)=1$ & \cite{BL2000} \\ \hline
 & $x^{q^s}+\delta x^{q^{s(t-1)}}$ & $\begin{array}{cc} \gcd(s,t)=1,\\ \mathrm{N}_{q^t/q}(\delta)\neq 1 \end{array}$ & \cite{LunPol2000,LMPT2015}\\ \hline
$2\ell$ & $x^{q^s}+x^{q^{s(\ell-1)}}+\delta^{q^\ell+1}x^{q^{s(\ell+1)}}+\delta^{1-q^{2\ell-1}}x^{q^{s(2\ell-1)}}$ & $\begin{array}{cc} q \hspace{0.1cm} \text{odd}, \\ \mathrm{N}_{q^{2\ell}/q^\ell}(\delta)=-1,\\ \gcd(s,t)=1 \end{array}$ & \cite{BZZ,LMTZ,LZ2,NSZ,ZZ}\\ \hline
$6$ & $x^q+\delta x^{q^{4}}$  &  $\begin{array}{cc} q>4, \\ \text{certain choices of} \, \delta \end{array}$ & \cite{CMPZ,BCsM,PZ2019} \\ \hline
$6$ & $x^{q}+x^{q^3}+\delta x^{q^5}$ & $\begin{array}{cccc}q \hspace{0.1cm} \text{odd}, \\ \delta^2+\delta =1 \end{array}$
 & \cite{CsMZ2018,MMZ} \\ \hline
$8$ & $x^{q}+\delta x^{q^5}$ & $\begin{array}{cc} q\,\text{odd},\\ \delta^2=-1\end{array}$ & \cite{CMPZ} \\ \hline
\end{tabular}
\caption{Known examples of scattered polynomials in $\mathcal{L}_{t,q}$}
\label{scattpoly}
\end{table}

We now show that the subspaces of Corollary \ref{cor:ex1} cannot be equivalent to the subspaces of Corollary \ref{cor:ex2}.

\begin{proposition}
Let $n=2t$ be an even positive integer, $\xi,\eta \in \fqn\setminus \F_{q^t}$.
Let $f(x)$ be a scattered $q$-polynomial in $\mathcal{L}_{t,q}$, $s$ be a positive integer coprime with $t$, $\mu \in \F_{q^t}^*$ such that $\N_{q^t/q}(\mu)\neq 1$ and $\N_{q^t/q}(-\xi^{q^t+1}\mu)\ne (-1)^t$.
Then the $\fq$-subspaces \[U=\F_{q^t}\times S_{f,\eta}\,\,\,\,\,\, \text{and}\,\,\,\,\,\, U'=T_{x^{q^s},\xi}\times S_{\mu x^{q^s},\xi}\] 
are $\Gamma\mathrm{L}(2,q^n)$-inequivalent.
\end{proposition}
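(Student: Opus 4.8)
The plan is to apply the equivalence criterion of Lemma~\ref{lem:criteria} and thereby reduce everything to showing that neither factor of $U'$ can be a scalar multiple of the subfield $\F_{q^t}$.

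First I would note that all four subspaces involved have $\fq$-dimension $t$, so we are in the case $s=t$ of Lemma~\ref{lem:criteria}. To invoke that lemma I must verify its hypothesis, namely that $\langle(1,0)\rangle_{\fqn}$ and $\langle(0,1)\rangle_{\fqn}$ are the only points of weight $t$ in $L_U$ and in $L_{U'}$. For $U=\F_{q^t}\times S_{f,\eta}$ this is precisely Corollary~\ref{cor:ex1}, which applies since $f$ is scattered; for $U'=T_{x^{q^s},\xi}\times S_{\mu x^{q^s},\xi}$ it is precisely Corollary~\ref{cor:ex2}, which applies because of the hypotheses $\N_{q^t/q}(\mu)\neq 1$ and $\N_{q^t/q}(-\xi^{q^t+1}\mu)\neq(-1)^t$. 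With the hypothesis in hand, the case $s=t$ of Lemma~\ref{lem:criteria} states that $U$ and $U'$ are $\Gamma\mathrm{L}(2,q^n)$-equivalent if and only if, up to interchanging the two factors, $T_{x^{q^s},\xi}$ and $S_{\mu x^{q^s},\xi}$ are obtained from $\F_{q^t}$ and $S_{f,\eta}$ by a scalar and a field automorphism $\rho\in\mathrm{Aut}(\fqn)$.

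Now $\F_{q^t}$ is a subfield, so $\F_{q^t}^{\rho}=\F_{q^t}$ for every $\rho\in\mathrm{Aut}(\fqn)$; hence in either alternative the equivalence would force one of $T_{x^{q^s},\xi}$ or $S_{\mu x^{q^s},\xi}$ to equal $\lambda\,\F_{q^t}$ for some $\lambda\in\fqn^*$. Both of these subspaces have the common shape $W_c=\{v+\xi c\,v^{q^s}\colon v\in\F_{q^t}\}$ with $c\in\{1,\mu\}\subseteq\F_{q^t}^*$, so the whole statement reduces to the claim that $W_c\neq\lambda\,\F_{q^t}$ for every $\lambda\in\fqn^*$ and every $c\in\F_{q^t}^*$. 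To prove this I would assume $\lambda^{-1}W_c=\F_{q^t}$ and use the description $\F_{q^t}=\{x\in\fqn\colon x^{q^t}=x\}$. Applying $x\mapsto x^{q^t}$ to $\lambda^{-1}(v+\xi c\,v^{q^s})$ and using $v,v^{q^s},c\in\F_{q^t}$ then gives, for every $v\in\F_{q^t}$,
\[ (1-\lambda^{q^t-1})\,v+c\,(\xi^{q^t}-\lambda^{q^t-1}\xi)\,v^{q^s}=0. \]
Since $\gcd(s,t)=1$ and $t\geq2$, after reducing $s$ modulo $t$ the monomials $v$ and $v^{q^s}$ are linearly independent as maps on $\F_{q^t}$ (a nonzero $q$-linearized polynomial over $\fqn$ of $q$-degree at most $t-1$ cannot vanish on all of $\F_{q^t}$), so both coefficients must be zero. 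The coefficient of $v$ gives $\lambda^{q^t-1}=1$, i.e.\ $\lambda\in\F_{q^t}$, and the coefficient of $v^{q^s}$ then gives $\xi^{q^t}=\xi$, i.e.\ $\xi\in\F_{q^t}$, contradicting $\xi\in\fqn\setminus\F_{q^t}$. Thus $W_c$ is never a scaled copy of $\F_{q^t}$, both alternatives of Lemma~\ref{lem:criteria} fail, and $U$ and $U'$ are $\Gamma\mathrm{L}(2,q^n)$-inequivalent.

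I expect the main (in fact essentially the only) substantive point to be this last claim: recognizing that the structural invariant separating the two families is that $U$ carries the genuine subfield factor $\F_{q^t}$, whereas both factors of $U'$ are twisted by a nontrivial $q^s$-monomial, and then carrying out the short Frobenius computation showing that a twisted factor can never coincide with a scalar multiple of $\F_{q^t}$. The verification of the hypotheses of Lemma~\ref{lem:criteria} is routine, being handed to us directly by Corollaries~\ref{cor:ex1} and~\ref{cor:ex2}.
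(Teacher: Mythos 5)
Your proposal is correct and follows essentially the same route as the paper: both invoke Lemma \ref{lem:criteria} (in the equal-weight case, using that $\F_{q^t}^{\rho}=\F_{q^t}$) to conclude that an equivalence would force one of $T_{x^{q^s},\xi}$ or $S_{\mu x^{q^s},\xi}$ to equal $\lambda\,\F_{q^t}$, and then derive a contradiction. The only difference is in that last step: the paper dismisses it in one line by observing that these two subspaces are scattered with respect to $\F_{q^t}$ (so they meet every $\F_{q^t}$-line in $\fq$-dimension at most $1$ and cannot be a full line), whereas you carry out an explicit and equally valid Frobenius computation showing $W_c\neq\lambda\,\F_{q^t}$.
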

\begin{proof}
By contradiction, assume that $U$ and $U'$ are $\Gamma\mathrm{L}(2,q^n)$-equivalent.
From Lemma \ref{lem:criteria}, we have that either $T_{x^{q^s},\xi}=\lambda \F_{q^t}^\rho$, or $S_{\mu x^{q^s},\xi}=\lambda \F_{q^t}^\rho$, for some $\lambda \in \F_{q^n}^*$ and $\rho \in \mathrm{Aut}(\fqn)$, which is a contradiction since $T_{x^{q^s},\xi}$ and $S_{\mu x^{q^s},\xi}$ are scattered $\fq$-subspaces in $\fqn=V(2,q^t)$.
\end{proof}

Now, we investigate the $\mathrm{\Gamma L}(2,q^n)$-equivalence of the examples in Corollary \ref{cor:ex2}.

\begin{lemma} \label{lemma:singolosubspace}
Let $n=2t$ be an even positive integer and $t\geq 3$.
Let $\xi,\eta \in \F_{q^{2t}}\setminus \F_{q^t}$.
Let $s,s'$ be positive integers such that $\gcd(s,t)=\gcd(s',t)=1$.
Let $\mu,\overline{\mu}\in \F_{q^t}^*$ and consider the $\fq$-subspaces of $\F_{q^{2t}}$
\[ S_{\mu x^{q^s},\xi}=\{ u+\xi \mu u^{q^s} \colon u \in \F_{q^t} \} \]
and
\[ S_{\overline{\mu} x^{q^{s'}},\eta}=\{ v+\eta \overline{\mu} v^{q^{s'}} \colon v \in \F_{q^t} \}. \]
There exist $\lambda \in \fqn^*$ and $\sigma \in \mathrm{Aut}(\fqn)$ such that
\begin{equation}\label{eq:equivalencemonomials}
    S_{\mu x^{q^s},\xi}=\lambda (S_{\overline{\mu} x^{q^{s'}},\eta})^\sigma
\end{equation}
if and only if one of the following condition holds
\begin{itemize}
    \item $s\equiv -s'\pmod{t}$, $\xi=\frac{\eta^\sigma+Aa}{A}$, $\overline{\mu}=\frac{1}{c \mu^{q^{-s}\sigma^{-1}} A^{\sigma^{-1}}b^{\sigma^{-1}}}$ and $B=-Aa$, where $c\in \F_{q^t}$ such that $\N_{q^t/q}(c)=1$;
    \item $s\equiv s'\pmod{t}$, $B=0$, $\xi =\frac{\eta^{\sigma}}{A}$ and $\overline{\mu}=\frac{\mu^{\sigma^{-1}}c}{A^{\sigma^{-1}}}$, where $c\in \F_{q^t}$ such that $\N_{q^t/q}(c)=1$,
\end{itemize}
where $\xi^2=a\xi+b$ and $\eta^\sigma=A\xi+B$ with $a,b,A,B \in \F_{q^t}$.
\end{lemma}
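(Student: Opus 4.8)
The plan is to translate the subspace identity \eqref{eq:equivalencemonomials} into an identity between two $q$-polynomials on $\F_{q^t}$ and then read off the constraints by comparing $q$-exponents modulo $t$. First I would write a generic element of $(S_{\overline{\mu} x^{q^{s'}},\eta})^\sigma$ as $w+\eta^\sigma m\, w^{q^{s'}}$, where $w=v^\sigma$ runs over $\F_{q^t}$ and $m=\overline{\mu}^\sigma\in \F_{q^t}$, multiply by $\lambda=\lambda_0+\lambda_1\xi$ with $\lambda_0,\lambda_1\in \F_{q^t}$, and expand in the $\F_{q^t}$-basis $\{1,\xi\}$ using $\xi^2=a\xi+b$ and $\eta^\sigma=A\xi+B$ (note $A\ne0$ since $\eta^\sigma\notin \F_{q^t}$, and $b\ne0$ by irreducibility). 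This yields that the generic element of $\lambda(S_{\overline{\mu} x^{q^{s'}},\eta})^\sigma$ has coordinates $(X(w),Y(w))$ with
\[ X(w)=\lambda_0 w+m(\lambda_0 B+\lambda_1 Ab)w^{q^{s'}},\qquad Y(w)=\lambda_1 w+m(\lambda_0 A+\lambda_1 Aa+\lambda_1 B)w^{q^{s'}}. \]
Since a generic element of $S_{\mu x^{q^s},\xi}$ has coordinates $(u,\mu u^{q^s})$, the equality \eqref{eq:equivalencemonomials} of the two $t$-dimensional $\fq$-subspaces is equivalent to: the $q$-polynomial $X$ is a permutation of $\F_{q^t}$ and $Y(w)=\mu X(w)^{q^s}$ for all $w\in \F_{q^t}$.

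The core step is the analysis of the identity $Y(w)=\mu X(w)^{q^s}$ modulo $w^{q^t}-w$. Expanding, the right-hand side is supported on the $q$-exponents $s$ and $s+s'$, while $Y$ is supported on $0$ and $s'$ (all read modulo $t$). Here the hypotheses $t\ge 3$ and $\gcd(s,t)=\gcd(s',t)=1$ are essential: they force $s,s'\not\equiv0\pmod t$ and rule out $2s'\equiv0\pmod t$, so that the two supports cannot coincide with all four terms present, and one term must be dropped on each side. A short case check shows the surviving matchings are exactly $s+s'\equiv0\pmod t$ (which forces $\lambda_0=0$ together with the vanishing of the $w^{q^{s'}}$-term of $Y$) or $s\equiv s'\pmod t$ (which forces $\lambda_1=0$ together with the vanishing of the $w^{q^{s'}}$-term of $X$); the two remaining combinations would give $s\equiv0$ and are excluded. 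This dichotomy is the main obstacle, since it is exactly where the numerical hypotheses do all the work.

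It then remains to extract the parameter relations in each branch. In the branch $s\equiv -s'\pmod t$ the vanishing of the $\xi$-coefficient of $w^{q^{s'}}$ forces $\lambda_1(Aa+B)=0$, hence $B=-Aa$, which is equivalent to $\xi=(\eta^\sigma+Aa)/A$; matching the single surviving coefficient (at exponent $0$) gives $\lambda_1=\mu\bigl(m\lambda_1 Ab\bigr)^{q^s}$, and solving for $m$ produces $m=\lambda_1^{q^{-s}-1}\mu^{-q^{-s}}(Ab)^{-1}$. Writing $c=\lambda_1^{q^{-s}-1}$, which satisfies $\N_{q^t/q}(c)=1$, and recalling $\overline{\mu}=m^{\sigma^{-1}}$, I recover $\overline{\mu}=\frac{1}{c'\mu^{q^{-s}\sigma^{-1}}A^{\sigma^{-1}}b^{\sigma^{-1}}}$ with $\N_{q^t/q}(c')=1$. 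The branch $s\equiv s'\pmod t$ is entirely analogous: $\lambda_1=0$ forces $B=0$ (so $\xi=\eta^\sigma/A$), and the coefficient match $mA=\mu\lambda_0^{q^s-1}$ gives $\overline{\mu}=\mu^{\sigma^{-1}}c/A^{\sigma^{-1}}$ with $\N_{q^t/q}(c)=1$.

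Finally, for the converse I would reverse these computations: given the stated relations and the automorphism $\sigma$, the norm condition $\N_{q^t/q}(c)=1$ guarantees, since $\gcd(s,t)=1$ makes $x\mapsto x^{q^s-1}$ surject onto the norm-one subgroup of $\F_{q^t}^*$, that one can solve for $\lambda_1$ (resp.\ $\lambda_0$), set $\lambda_0=0$ (resp.\ $\lambda_1=0$), and check directly that $X$ is a permutation with $Y=\mu X^{q^s}$, so that \eqref{eq:equivalencemonomials} holds. Note that for $t\ge3$ the two branches are mutually exclusive, since $s\equiv s'$ and $s\equiv -s'$ together would force $2s'\equiv0\pmod t$. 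The only genuinely delicate points are the bookkeeping of the $\sigma$- and $q^{-s}$-twists when reading off $\overline{\mu}$, and the verification that the norm-one subgroup is precisely the image of $x\mapsto x^{q^s-1}$.
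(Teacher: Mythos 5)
Your proposal is correct and follows essentially the same route as the paper: write $\lambda=\lambda_0+\lambda_1\xi$, expand $\lambda(w+\eta^\sigma\overline{\mu}^\sigma w^{q^{s'}})$ in the basis $\{1,\xi\}$ to get the same two-coordinate system, reduce to the single $q$-polynomial identity, and compare supports $\{0,s'\}$ versus $\{s,s+s'\}$ modulo $t$ using $t\ge 3$ and the coprimality hypotheses to force exactly one of the two branches. Your extraction of the parameter relations (including the $q^{-s}$- and $\sigma$-twists and the norm-one element $c=\lambda_1^{q^{-s}-1}$) matches the paper's computation, and you in fact spell out the converse direction slightly more explicitly than the paper does.
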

\begin{proof}
Let $\lambda=\lambda_0+\lambda_1 \xi$, with $\lambda_0,\lambda_1 \in \F_{q^t}$, then \eqref{eq:equivalencemonomials} is satisfied if and only if for every $v \in \F_{q^t}$ there exists $u \in \F_{q^t}$ such that 
\[ v+\xi \mu v^{q^s}=(\lambda_0+\lambda_1\xi)(u^\sigma +\overline{\mu}^\sigma u^{\sigma q^{s'}} \eta^{\sigma}), \]
that is 
\[
\left\{
\begin{array}{ll}
v= \lambda_0 u^{\sigma}+(\lambda_0 \overline{\mu}^\sigma B + \lambda_1 \overline{\mu}^\sigma A b)u^{\sigma q^{s'}},\\
\mu v^{q^s}=\lambda_1 u^{\sigma}+(\lambda_0\overline{\mu}^\sigma A + \lambda_1 \overline{\mu}^\sigma A a+\lambda_1\overline{\mu}^\sigma B)u^{\sigma q^{s'}}.
\end{array}
\right.
\]
This means that
\begin{small}
\begin{equation}\label{eq:condpolmon} \mu\lambda_0^{q^s}u^{\sigma q^s}+\mu (\lambda_0 \overline{\mu}^\sigma B + \lambda_1 \overline{\mu}^\sigma A b)^{q^s} u^{\sigma q^{s'+s}}=\lambda_1 u^{\sigma}+(\lambda_0\overline{\mu}^\sigma A + \lambda_1 \overline{\mu}^\sigma A a+\lambda_1\overline{\mu}^\sigma B)u^{\sigma q^{s'}}, \end{equation}
\end{small}
for every $u \in \F_{q^t}$ and hence it can be seen as a polynomial identity in $u^\sigma$.

First we show that $\{0 \pmod{t},s \pmod{t},s' \pmod{t},s+s'\pmod{t}\}$ cannot have size $4$.
Indeed, in such a case by \eqref{eq:condpolmon} we easily get $\lambda_0=\lambda_1=0$, a contradiction.
Now, since $\gcd(s,t)=\gcd(s',t)=1$ and $t\geq 3$, only one of the following cases occurs:
\begin{itemize}
    \item $s \equiv -s' \pmod{t}$;
    \item $s \equiv s' \pmod{t}$.
\end{itemize}
Suppose that $s \equiv -s' \pmod{t}$. From the coefficient
in \eqref{eq:condpolmon} of $q$-degree $s$ we have $\lambda_0=0$, and from the remaining coefficients we obtain $\lambda_1 \overline{\mu}^\sigma (Aa+B)=0$ and $\lambda_1=\mu\lambda_1 \overline{\mu}^\sigma A b$.
So, $B=-Aa$ and $\overline{\mu}^\sigma=\frac{1}{c \mu^{q^{-s}} A b}$, where $c\in \F_{q^t}$ such that $\N_{q^t/q}(c)=1$, which imply $\eta^\sigma=A \xi -Aa$.
The assertion then follows.
Suppose now that $s \equiv s' \pmod{t}$.
From \eqref{eq:condpolmon} we get $\lambda_1=B=0$ and $\mu \lambda_0^{q^s}=\lambda_0\overline{\mu}^\sigma A$, which imply $\xi=\frac{\eta^{\sigma}}A$ and $\overline{\mu}^\sigma=\frac{\mu \lambda_0^{q^s-1}}{A}$ and the assertion follows.
\end{proof}

The above lemma allows us to prove the next classification result.

\begin{theorem}
Let $\xi,\eta \in \fqn \setminus \F_{q^t}$, with $\xi^2=a\xi+b$ and $t\geq 3$.
Let $\mu_1,\mu_2 \in \F_{q^n}^*$ such that $\N_{q^t/q}(\mu_i)\neq 1$, $\N_{q^t/q}(-\xi^{q^t+1}\mu_1 )\ne (-1)^t$, $\N_{q^t/q}(-\eta^{q^t+1}\mu_2)\ne (-1)^t$, $s,s'$ be positive integers coprime with $t$.
The $\fq$-subspaces $U=S_{x^{q^s},\xi} \times T_{\mu_1 x^{q^{s}},\xi}$ and $U'= S_{x^{q^{s'}},\eta} \times T_{\mu_2 x^{q^{s'}},\eta}$ are $\mathrm{\Gamma L}(2,q^n)$-equivalent through the map $\varphi \in \mathrm{\Gamma L}(2,q^n)$ with companion automorphism $\sigma \in \mathrm{Aut}(\fqn)$ if and only if one of the following conditions holds:

\begin{enumerate}
    \item[\bf{I)}] $s\equiv -s'\pmod{t}$, $\xi=\frac{\eta^\sigma+Aa}{A}$, $B=-Aa$ and either
    \begin{itemize}
      \item[\bf{I.1)}]  $\mu_2=\frac{1}{c_1 A^{\sigma^{-1}}b^{\sigma^{-1}}}$ and $\mu_1=\frac{1}{c_2A^{q^s}b^{q^s}}$, where $c_1,c_2\in \F_{q^t}$ such that $\N_{q^t/q}(c_i)=1$, or
      \item[\bf{I.2)}] $\N_{q^t/q}(Ab)=1$ and $\mu_2=\frac{1}{c\mu_1^{q^{-s}\sigma^{-1}}}$, where $c\in \F_{q^t}$ such that $\N_{q^t/q}(c)=1$.
    \end{itemize}
\item[\bf{II)}] $s\equiv s'\pmod{t}$,  $\xi =\frac{\eta^{\sigma}}{A}$, $B=0$ and either
    \begin{itemize}
     \item[\bf{II.1)}] $\mu_2=\frac{c_1}{A^{\sigma^{-1}}}$and $\mu_1=\frac{A}{c_2}$, where $c_1,c_2\in \F_{q^t}$ such that $\N_{q^t/q}(c_i)=1$, or
     \item[\bf{II.2)}] $\N_{q^t/q}(A)=1$ and $\mu_2=\frac{\mu_1^{\sigma^{-1}}c}{A^{\sigma^{-1}}}$, where $c\in \F_{q^t}$ such that $\N_{q^t/q}(c)=1$,
    \end{itemize}
\end{enumerate}
where $\eta^{\sigma}=A \xi +B$. 
\end{theorem}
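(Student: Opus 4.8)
The plan is to chain the two equivalence criteria already established. Lemma~\ref{lem:criteria} converts $\mathrm{\Gamma L}(2,q^n)$-equivalence of a product subspace into scalar--semilinear equivalences of its two factors, while Lemma~\ref{lemma:singolosubspace} resolves exactly such equivalences between monomial subspaces of the common type $S_{cx^{q^r},\zeta}$; the theorem is then assembled by feeding the output of the first lemma into the second.

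First I would check that Lemma~\ref{lem:criteria} applies. The hypotheses $\N_{q^t/q}(\mu_i)\neq 1$, $\N_{q^t/q}(-\xi^{q^t+1}\mu_1)\neq(-1)^t$ and $\N_{q^t/q}(-\eta^{q^t+1}\mu_2)\neq(-1)^t$ are precisely those of Corollary~\ref{cor:ex2}, so both $L_U$ and $L_{U'}$ have exactly two points of weight greater than one, namely $\langle(1,0)\rangle_{\fqn}$ and $\langle(0,1)\rangle_{\fqn}$, each of weight $t=n/2$. Thus every factor has $\fq$-dimension $t$ and the two coordinate points are the only points of maximal weight. Writing the equivalence so that the $\xi$-subspaces sit on the left (which is what makes the outputs match the stated relation $\eta^\sigma=A\xi+B$), Lemma~\ref{lem:criteria} in the case $s=t$ yields: $U$ and $U'$ are equivalent with companion automorphism $\sigma$ if and only if, for some $\lambda,\mu\in\fqn^*$, either $S_{x^{q^s},\xi}=\mu\,(S_{x^{q^{s'}},\eta})^\sigma$ and $T_{\mu_1x^{q^s},\xi}=\lambda\,(T_{\mu_2x^{q^{s'}},\eta})^\sigma$ (the \emph{diagonal} case), or $S_{x^{q^s},\xi}=\mu\,(T_{\mu_2x^{q^{s'}},\eta})^\sigma$ and $T_{\mu_1x^{q^s},\xi}=\lambda\,(S_{x^{q^{s'}},\eta})^\sigma$ (the \emph{antidiagonal} case).

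Next I would apply Lemma~\ref{lemma:singolosubspace} to each of these four equalities. Since all four subspaces are of monomial type and $t\geq 3$, the lemma applies; because $\sigma,\xi,\eta$ are shared by the two equalities of a given case, it imposes a single dichotomy: either $s\equiv -s'\pmod t$, forcing $B=-Aa$ and $\xi=(\eta^\sigma+Aa)/A$ (case I), or $s\equiv s'\pmod t$, forcing $B=0$ and $\xi=\eta^\sigma/A$ (case II). The two congruences cannot occur together, since $s\equiv s'\equiv-s'$ would give $2s'\equiv 0\pmod t$, impossible as $\gcd(s',t)=1$ and $t\geq 3$. The basis data $a,b,A,B$ are common to both equalities, so the relations for $\xi$ and $B$ come out consistently, and only the coefficient relations distinguish the two factor-equalities.

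The last step, which I expect to be the main obstacle, is the coefficient bookkeeping. The coefficient of an $S$-factor is $1$ while that of a $T$-factor is $\mu_i$, so substituting these values into the coefficient formulas of Lemma~\ref{lemma:singolosubspace} and applying the appropriate power of $\sigma$ and of the Frobenius produces the pinned-down versus relative constraints. In the antidiagonal case an $S$-factor (coefficient $1$) is matched against a $T$-factor: plugging $1$ into $\overline{\mu}=1/(c\,\mu^{q^{-s}\sigma^{-1}}A^{\sigma^{-1}}b^{\sigma^{-1}})$ (resp.\ into the $s\equiv s'$ formula $\overline{\mu}=\mu^{\sigma^{-1}}c/A^{\sigma^{-1}}$) gives $\mu_2=1/(c_1A^{\sigma^{-1}}b^{\sigma^{-1}})$ and $\mu_1=1/(c_2A^{q^s}b^{q^s})$, i.e.\ subcase \textbf{I.1} (resp.\ $\mu_2=c_1/A^{\sigma^{-1}}$ and $\mu_1=A/c_2$, i.e.\ \textbf{II.1}). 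In the diagonal case the two $S$-factors (both coefficient $1$) are matched to each other, which forces $\N_{q^t/q}(Ab)=1$ (resp.\ $\N_{q^t/q}(A)=1$), while the two $T$-factors give the relative relation $\mu_2=1/(c\,\mu_1^{q^{-s}\sigma^{-1}})$ (resp.\ $\mu_2=\mu_1^{\sigma^{-1}}c/A^{\sigma^{-1}}$), i.e.\ subcase \textbf{I.2} (resp.\ \textbf{II.2}). Throughout, the norm-one elements $c,c_i$ are exactly the residual freedom allowed by Lemma~\ref{lemma:singolosubspace}. The converse is then immediate: prescribing the data of any subcase makes the corresponding factor-equalities hold by Lemma~\ref{lemma:singolosubspace}, and Lemma~\ref{lem:criteria} reassembles them into the required $\varphi\in\mathrm{\Gamma L}(2,q^n)$.
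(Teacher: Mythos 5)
Your proposal is correct and follows essentially the same route as the paper: Lemma \ref{lem:criteria} splits the $\mathrm{\Gamma L}(2,q^n)$-equivalence into the diagonal and antidiagonal factor-wise cases, and Lemma \ref{lemma:singolosubspace} is then applied to each factor equality, with the antidiagonal case producing subcases I.1/II.1 and the diagonal case producing I.2/II.2, exactly as in the paper's proof.
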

\begin{proof}
Assume that $U$ and $U'$ are $\mathrm{\Gamma L}(2,q^n)$-equivalent.
By Lemma \ref{lem:criteria}, there exist $\lambda,\delta \in \fqn^*$ and $\sigma \in \mathrm{Aut}(\F_{q^n})$ such that either 
\[S_{x^{q^s},\xi}=\lambda (T_{\mu_2 x^{q^{s'}},\eta})^{\sigma}\,\, \text{and}\,\, T_{\mu_1 x^{q^{s}},\xi}=\delta (S_{x^{q^{s'}},\eta})^{\sigma}\] 
or 
\[S_{x^{q^s},\xi}=\delta (S_{x^{q^{s'}},\eta})^{\sigma}\,\, \text{and}\,\, T_{\mu_1 x^{q^{s}},\xi} =\lambda (T_{\mu_2 x^{q^{s'}},\eta})^{\sigma}.\] 
By Lemma \ref{lemma:singolosubspace}, the former case is verified if and only if one of the following conditions hold:

\begin{itemize}
    \item $s\equiv -s'\pmod{t}$, $B=-Aa$, $\xi=\frac{\eta^\sigma+Aa}{A}$,  $\mu_2=\frac{1}{c_1 A^{\sigma^{-1}}b^{\sigma^{-1}}}$ and $\mu_1=\frac{1}{c_2A^{q^s}b^{q^s}}$, where $c_1,c_2\in \F_{q^t}$ such that $\N_{q^t/q}(c_i)=1$;
    \item $s\equiv s'\pmod{t}$, $B=0$, $\xi =\frac{\eta^{\sigma}}{A}$, $\mu_2=\frac{c_1}{A^{\sigma^{-1}}}$, $\mu_1=\frac{A}{c_2}$, where $c_1,c_2\in \F_{q^t}$ such that $\N_{q^t/q}(c_i)=1$.
\end{itemize}
The latter case, by Lemma \ref{lemma:singolosubspace}, is verified if and only if one the following conditions hold:
\begin{itemize}
    \item $s\equiv -s'\pmod{t}$, $B=-Aa$, $\xi=\frac{\eta^\sigma+Aa}{A}$,  $\N_{q^t/q}(Ab)=1$ and $\mu_2=\frac{1}{c\mu_1^{q^{-s}\sigma^{-1}}}$, where $c\in \F_{q^t}$ such that $\N_{q^t/q}(c)=1$.;
    \item $s\equiv s'\pmod{t}$, $B=0$, $\xi =\frac{\eta^{\sigma}}{A}$, $\N_{q^t/q}(A)=1$, $\mu_2=\frac{\mu_1^{\sigma^{-1}}c}{A^{\sigma^{-1}}}$ where $c\in \F_{q^t}$ such that $\N_{q^t/q}(c)=1$.
\end{itemize}
This completes the proof.
\end{proof}

\begin{remark}
In particular by the previous theorem we obtain that the number of $\Gamma\mathrm{L}(2,q^{2t})$-inequivalent subspaces of form $U=T_{x^{q^s},\eta} \times S_{\mu x^{q^s},\xi}$ is at least $\varphi(t)/2$, where $\varphi$ is the Euler totient function.
\end{remark}

\begin{remark}
In the above result we avoid the case when $t=2$, since the $\fq$-linear sets of rank $4$ in $\PG(1,q^4)$ have been already classified, see \cite{BoPol,CsZ2018}.
In particular, Examples $B_{22}$ of \cite{BoPol} belong to the family of Corollary \ref{cor:ex1}, whereas Examples $C_{13}$ belong to the family of Corollary \ref{cor:ex2}.
\end{remark}

\subsection{Polynomial form}

In this subsection we determine a linearized polynomial $p(x)$ such that $L_U$ is $\mathrm{P\Gamma L}(2,q^n)$-equivalent to $L_p$, where $L_U$ is as in Corollaries \ref{cor:ex1} and \ref{cor:ex2}.

\begin{theorem}\label{th:pol2w}
Let $n=2t$ be an even positive integer. Consider $\F_{q^{t}}$ and let $S_{f,\xi}=\{z+\xi f(z) \colon z \in \F_{q^{n/2}}\}$, where $f(z)=\sum_{i=0}^{t-1} A_iz^{q^i} \in \mathcal{L}_{t,q}$ is a scattered polynomial.
Let \[ U=\F_{q^t} \times S_{f,\xi}. \] 
Then $L_U$ is $\mathrm{P}\Gamma\mathrm{L}(2,q^n)$-equivalent to $L_p$, with 
\[ p(x)=\mathrm{Tr}_{q^n/q^{t}}\left( \frac{A_0+\epsilon^{q^{t}}}{\epsilon^{q^{t}}-\epsilon}x+ \sum_{i=1}^{t-1} \frac{A_i}{\epsilon^{q^{i+t}}-\epsilon^{q^i}} x^{q^i} \right)=\]
\[\mathrm{Tr}_{q^n/q^t}\left( f\left(\frac{x}{\epsilon^{q^t}-\epsilon} \right) \right)+\mathrm{Tr}_{q^n/q^t}\left( \frac{\epsilon^{q^t} x}{\epsilon^{q^t}-\epsilon}\right),\]
where $\epsilon=\xi^{-1}$.
\end{theorem}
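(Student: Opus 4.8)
The plan is to reduce the theorem to a single explicit computation of the linearized map $p$, followed by an explicit chain of collineations carrying $L_p$ onto $L_U$. Throughout I write $\xi^2=a\xi+b$ with $a,b\in\F_{q^t}$, so that $\xi+\xi^{q^t}=a$ and $\xi\xi^{q^t}=-b$, and I set $\epsilon=\xi^{-1}$ and $\Delta=\epsilon^{q^t}-\epsilon$. The preliminary identities needed are $\Delta^{q^t}=-\Delta$ (since $\epsilon^{q^{2t}}=\epsilon$), $\xi-\xi^{q^t}=-b\Delta$, and $\mathrm{Tr}_{q^n/q^t}(z)=z+z^{q^t}$. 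The equality of the two displayed forms of $p$ is then routine: expanding $f(x/\Delta)=\sum_i A_i x^{q^i}/\Delta^{q^i}$ and using $\Delta^{q^i}=\epsilon^{q^{i+t}}-\epsilon^{q^i}$ reproduces all coefficients of $p$ with $i\ge 1$, while the extra summand $\mathrm{Tr}_{q^n/q^t}(\epsilon^{q^t}x/\Delta)$ exactly corrects the $i=0$ coefficient from $A_0/\Delta$ to $(A_0+\epsilon^{q^t})/\Delta$.

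The heart of the argument is to evaluate $p$ on the $\F_{q^t}$-basis $\{1,\xi\}$ of $\fqn$. For $x_0\in\F_{q^t}$ the quantities $A_i$ and $x_0^{q^i}$ are fixed by the $q^t$-power map while each $\Delta^{q^i}$ is negated, so $\mathrm{Tr}_{q^n/q^t}(A_ix_0^{q^i}/\Delta^{q^i})=0$ for $i\ge 1$, and only the $i=0$ term survives with $\mathrm{Tr}_{q^n/q^t}\!\big((A_0+\epsilon^{q^t})/\Delta\big)=1$; hence $p(x_0)=x_0$. For $x=\xi x_1$ the factor $\xi^{q^i}$ no longer cancels, and using $\xi^{q^i}-\xi^{q^{i+t}}=(-b)^{q^i}\Delta^{q^i}$ one finds that the terms with $i\ge 1$ assemble into $\sum_{i\ge1}A_i(-bx_1)^{q^i}$ and the $i=0$ term into $(a-A_0b)x_1$, giving $p(\xi x_1)=ax_1+f(-bx_1)$. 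Therefore
\[ p(x_0+\xi x_1)=x_0+ax_1+f(-bx_1),\qquad x_0,x_1\in\F_{q^t}. \]
In particular $p$ takes values in $\F_{q^t}$, fixes $\F_{q^t}$ pointwise, and is idempotent, so $p$ is the projection of $\fqn$ onto $\F_{q^t}$ along $K:=\ker p=\{-\xi^{q^t}x_1-f(-bx_1):x_1\in\F_{q^t}\}$. (Note that this form is available for every scattered $f$, unlike the projection along $S_{f,\xi}$ furnished by Theorem \ref{clas}, which would involve $f^{-1}$.)

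It remains to match $L_p$ with $L_U$ up to $\mathrm{P\Gamma L}(2,q^n)$. Applying $\begin{pmatrix}1&-1\\0&1\end{pmatrix}$ to $U_p=\{(x,p(x))\}$ sends it exactly onto the product $K\times\F_{q^t}$ (the map $x\mapsto(x-p(x),p(x))$ is a bijection onto $K\times\F_{q^t}$ by idempotency of $p$), so after a coordinate swap $L_p\sim L_{\F_{q^t}\times K}$. A short computation shows that $K$ and $-\xi^{q^t}S_{f,\xi}=\{bf(u)-\xi^{q^t}u:u\in\F_{q^t}\}$ determine the same point set $\{(bf(u)-au:u)\}$ of the subline $\PG(\fqn,\F_{q^t})$, even though they are distinct $\F_q$-subspaces; hence $\F_{q^t}\times K$ and $\F_{q^t}\times(-\xi^{q^t}S_{f,\xi})$ define the same linear set of $\PG(1,q^n)$, and scaling the second coordinate by $(-\xi^{q^t})^{-1}\in\fqn^*$ finally produces $L_{\F_{q^t}\times S_{f,\xi}}=L_U$. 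The main obstacle is precisely this last step: because the two representatives are not $\Gamma\mathrm{L}(2,q^n)$-equivalent product subspaces, Lemma \ref{lem:criteria} cannot be invoked, and one must verify the coincidence at the level of point sets. What makes it work is the arithmetic coincidence that the projectivity $\begin{pmatrix}-a&b\\1&0\end{pmatrix}$ of $\PG(\fqn,\F_{q^t})$ relating the two small scattered linear sets is multiplication by $\xi-a=-\xi^{q^t}$; being an $\fqn$-scalar, it simultaneously lifts to an element of $\mathrm{PGL}(2,q^n)$ acting on the second coordinate.
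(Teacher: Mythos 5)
Your proof is correct, and it runs the argument in the opposite direction from the paper. The paper starts from $U'=\F_{q^t}\times \epsilon S_{f,\xi}$, invokes the projection description of Section \ref{sec:projection} to know a priori that $L_{U'}$ is $L_p$ for $p$ the projection onto $\F_{q^t}$ along $\epsilon S_{f,\xi}=\{\epsilon z+f(z)\colon z\in\F_{q^t}\}$, and then \emph{derives} the coefficients $a_i$ by solving the linear conditions $\mathrm{Tr}_{q^n/q^t}(a_i)=\delta_{i0}$ and $\mathrm{Tr}_{q^n/q^t}(a_i\epsilon^{q^i})=-A_i$; you instead take the stated formula, \emph{verify} by evaluating on the $\F_{q^t}$-basis $\{1,\xi\}$ that $p(x_0+\xi x_1)=x_0+ax_1+f(-bx_1)$ is the idempotent projection onto $\F_{q^t}$, and then build the collineations by hand rather than citing Theorem \ref{clas}. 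Both are sound; the paper's version explains where the coefficients come from, yours is self-contained and checks the identity directly. One remark: your final step is longer than it needs to be. Your kernel $K=\{-\xi^{q^t}x_1-f(-bx_1)\colon x_1\in\F_{q^t}\}$ is not merely projectively indistinguishable from a multiple of $S_{f,\xi}$ --- substituting $u=-bx_1$ and using $\xi^{q^t}/b=-\epsilon$ gives $K=\{-\epsilon u-f(u)\colon u\in\F_{q^t}\}=-\epsilon S_{f,\xi}=\epsilon S_{f,\xi}$ as an equality of $\F_q$-subspaces (this is exactly the paper's kernel). So $\F_{q^t}\times K$ is already $\mathrm{GL}(2,q^n)$-equivalent to $U$ by scaling the second coordinate by $\xi$, and the detour through $-\xi^{q^t}S_{f,\xi}$ and the comparison of point sets in $\PG(\fqn,\F_{q^t})$ --- which is correct but the most delicate step of your write-up --- can be deleted.
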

\begin{proof}
Let $U'=\F_{q^t} \times \epsilon S_{f,\xi}$, clearly $L_U$ and $L_{U'}$ are $\mathrm{PGL}(2,q^n)$-equivalent,
$\{1,\epsilon\}$ is an $\F_{q^t}$-basis of $\fqn$ and 
\[ \epsilon S_{f,\xi}=\{\epsilon z+ f(z) \colon z \in \F_{q^t}\}.\]
The linear set $L_{U'}$ is of the form of those in Section \ref{sec:projection} since $ \F_{q^t}\cap \epsilon S_{f,\xi}=\{0\}$, that is $L_{U'}$ is $\mathrm{P}\Gamma\mathrm{L}(2,q^n)$-equivalent to $L_p$ where $p(x)$ can be chosen as the projection map from $\epsilon S_{f,\xi}$ onto $\F_{q^t}$.
So,
\begin{itemize}
    \item $p(\alpha) \in \F_{q^{t}}$ for every $\alpha \in \fqn$;
    \item $p(\alpha)=\alpha$, for every $\alpha \in \F_{q^{t}}$;
    \item $p(\alpha)=0$, for every $\alpha \in \epsilon S_{f,\xi}$.
\end{itemize}
Suppose that $p(x)=\sum_{i=0}^{n-1}a_ix^{q^i}\in \mathcal{L}_{n,q}$.
Let $\alpha \in \fqn$, then $p(\alpha)=p(\alpha)^{q^{t}}$, that is
\[ \sum_{i=0}^{n-1} a_i \alpha^{q^i} = \sum_{j=0}^{n-1} a_j^{q^{t}} \alpha^{q^{j+t}} \]
for every $\alpha \in \fqn$.
This implies that $a_{t+j}=a_j^{q^{t}}$ for each $j \in \{0,\ldots,t-1\}$.
So,
\[ p(x)=\mathrm{Tr}_{q^n/q^{t}}(F(x)), \]
where $F(x)=\sum_{i=0}^{t-1}a_ix^{q^i}$.
Since $p(\alpha)=\alpha$ for each $\alpha \in \F_{q^{t}}$ we have
\[ \alpha=\mathrm{Tr}_{q^n/q^{t}}\left(\sum_{i=0}^{t-1}a_i\alpha^{q^i}\right)= \sum_{i=0}^{t-1}\mathrm{Tr}_{q^n/q^{t}}(a_i)\alpha^{q^i}.\]
Therefore,
\begin{equation}\label{eq:cond2w} \mathrm{Tr}_{q^n/q^{t}}(a_i)=\left\{\begin{array}{ll} 1, & \text{if}\, i=0,\\
0, & \text{otherwise}.\end{array} \right. \end{equation}
As for every $z \in \F_{q^{t}}$ we have $\mathrm{Tr}_{q^n/q^{t}}(F(\epsilon z+f(z)))=0$ and $p(f(z))=\mathrm{Tr}_{q^n/q^t}(F(f(z)))=f(z)$, then
\[ \sum_{i=0}^{t-1}\mathrm{Tr}_{q^n/q^{t}}\left(  a_i \epsilon^{q^i} \right) z^{q^i}+f(z)=0, \]
that is $\mathrm{Tr}_{q^n/q^{t}}(a_i\epsilon^{q^i})=-A_i$ for each $i \in \{0,\ldots,t-1\}$.
Using \eqref{eq:cond2w}, we obtain
\[ \left\{\begin{array}{ll}
a_0=\frac{A_0+\epsilon^{q^{t}}}{\epsilon^{q^{t}}-\epsilon},\\
a_i=\frac{A_i}{\epsilon^{q^{i+t}}-\epsilon^{q^i}},\,\,\text{for every}\,\, i\in \{1,\ldots,t-1\}.
\end{array}\right. \]
This concludes the proof.
\end{proof}

Choosing $f(z)=z^{q^s}$ with $\gcd(s,t)=1$ we obtain the following result as a consequence of Corollary \ref{cor:ex1}.

\begin{corollary}
Let $n=2t$ be an even positive integer and let $s$ be a positive integer such that $\gcd(s,t)=1$. Let $\{1,\xi\}$ be an $\F_{q^{t}}$-basis of $\fqn$ and denote $\epsilon=\xi^{-1}$.
The $\fq$-linear set $L_p$ defined by
\[ p(x)=\mathrm{Tr}_{q^n/q^{t}}\left( \frac{\epsilon^{q^{t}}}{\epsilon^{q^{t}}-\epsilon}x+  \frac{1}{\epsilon^{q^{s+t}}-\epsilon^{q^s}} x^{q^s} \right), \]
has two points of weight $n/2$ and all the other points have weight one.
\end{corollary}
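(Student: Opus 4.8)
The plan is to obtain this statement as the monomial specialization $f(z)=z^{q^s}$ of Theorem~\ref{th:pol2w}, with the weight distribution supplied by Corollary~\ref{cor:ex1}. First I would record that the $q$-polynomial $z^{q^s}$ is scattered in $\mathcal{L}_{t,q}$ precisely because $\gcd(s,t)=1$; this is the classical monomial case in the first row of Table~\ref{scattpoly} (see \cite{BL2000}). Hence the subspace $U=\F_{q^t}\times S_{f,\xi}$ with $f(z)=z^{q^s}$ is exactly an instance of Corollary~\ref{cor:ex1} (taking $g=0$, i.e.\ $\mu=0$, so that $T=\F_{q^t}$), which guarantees that $L_U$ has exactly two points of weight greater than one.

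Next I would identify these two exceptional points and pin down their weights directly from $U=\{(a,b)\colon a\in\F_{q^t},\, b\in S_{f,\xi}\}$. Since $0\in S_{f,\xi}$ and $0\in\F_{q^t}$, we have $U\cap\langle(1,0)\rangle_{\fqn}=\F_{q^t}\times\{0\}$ and $U\cap\langle(0,1)\rangle_{\fqn}=\{0\}\times S_{f,\xi}$; as the map $z\mapsto z+\xi f(z)$ is injective, both intersections have $\fq$-dimension $t$. Thus $\langle(1,0)\rangle_{\fqn}$ and $\langle(0,1)\rangle_{\fqn}$ have weight $t=n/2$, while all other points have weight one. Because Theorem~\ref{th:pol2w} provides a $\mathrm{P\Gamma L}(2,q^n)$-equivalence $L_U\sim L_p$ and equivalence preserves the weight distribution (Proposition~\ref{prop:weightinvariace}), this distribution transfers verbatim to $L_p$.

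It then remains to read off $p(x)$ from the general formula of Theorem~\ref{th:pol2w}. Writing $f(z)=\sum_{i=0}^{t-1}A_iz^{q^i}$ for $f(z)=z^{q^s}$ gives $A_s=1$ and $A_i=0$ for $i\ne s$. Since $z^{q^s}$ depends only on $s\bmod t$ on $\F_{q^t}$, I may assume $1\le s\le t-1$, so in particular $A_0=0$. Substituting into
\[ p(x)=\mathrm{Tr}_{q^n/q^{t}}\left( \frac{A_0+\epsilon^{q^{t}}}{\epsilon^{q^{t}}-\epsilon}x+ \sum_{i=1}^{t-1} \frac{A_i}{\epsilon^{q^{i+t}}-\epsilon^{q^i}} x^{q^i} \right), \]
the leading numerator $A_0+\epsilon^{q^t}$ reduces to $\epsilon^{q^t}$ and the sum collapses to its single surviving $i=s$ term with coefficient $1/(\epsilon^{q^{s+t}}-\epsilon^{q^s})$, yielding exactly the claimed $p(x)$.

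I expect no genuine obstacle here: the corollary is a substitution into Theorem~\ref{th:pol2w} together with Corollary~\ref{cor:ex1}. The only two steps that merit a word of care are the appeal to the scatteredness of the monomial $z^{q^s}$, which is exactly where the hypothesis $\gcd(s,t)=1$ is used, and the bookkeeping that forces $A_0=0$, for which it suffices to observe that one may reduce $s$ modulo $t$ without changing $z^{q^s}$ on $\F_{q^t}$.
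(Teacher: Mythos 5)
Your proposal is correct and follows exactly the paper's route: the paper states this corollary as the specialization $f(z)=z^{q^s}$ of Theorem~\ref{th:pol2w}, with the weight distribution coming from Corollary~\ref{cor:ex1} (monomials with $\gcd(s,t)=1$ being scattered). The extra details you supply — identifying the two points of weight $t$, invoking invariance of the weight distribution, and the bookkeeping $A_0=0$ after reducing $s$ modulo $t$ — are exactly the steps the paper leaves implicit.
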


Now we determine a polynomial $p(x)$ such that $L_U$ is $\mathrm{P}\Gamma\mathrm{L}(2,q^n)$-equivalent to $L_p$, where $L_U$ is as in Corollary \ref{cor:ex2}.

\begin{theorem}
Let $s$ be a positive integer coprime with $t$ and $\xi \in \F_{q^{2t}}$ such that $\{1,\xi\}$ is an $\F_{q^t}$-basis of $\F_{q^{2t}}$.
Let $f(x)=x^{q^s}$ and $g(x)=\mu x^{q^s}$ with $\mu \in \F_{q^t}$ such that $\N_{q^t/q}(\mu)\neq 1$ and $\N_{q^{t}/q}(-\xi^{q^t+1}\mu)\neq (-1)^t$.
Let $U=T_{\mu x^{q^s},\xi} \times  S_{x^{q^s},\xi}$.
Then $L_U$ is $\mathrm{P\Gamma L}(2,q^n)$-equivalent $L_p$, where
\[p(x)=\sum_{k=0}^{n-1}\left( \sum_{\ell=0}^{t-1} (u_\ell+u_\ell^{q^s}\xi ){\lambda_{\ell}^*}^{q^k} \right)x^{q^k},\]
$\{u_0,\ldots,u_{t-1}\}$ is an $\fq$-basis of $\F_{q^t}$ and $(\lambda_0^*,\ldots,\lambda_{n-1}^*)$ is the dual basis of $(u_0+\mu u_0^{q^s}\xi,\ldots,u_{t-1}+ \mu u_{t-1}^{q^s}\xi,u_0+u_0^{q^s}\xi,\ldots,u_{t-1}+u_{t-1}^{q^s}\xi)$.
\end{theorem}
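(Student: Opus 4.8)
The plan is to identify $L_U$ as an instance of the family studied in Section~\ref{sec:projection} and to compute the associated projection map explicitly. Set $T=T_{\mu x^{q^s},\xi}$ and $S=S_{x^{q^s},\xi}$, so that $U=T\times S$ with $\dim_{\fq}(T)=\dim_{\fq}(S)=t$ and $n=2t$. The points $\langle(1,0)\rangle_{\fqn}$ and $\langle(0,1)\rangle_{\fqn}$ then have weight $t=n/2$, and by Corollary~\ref{cor:ex2} the hypotheses on $\mu$ and $\xi$ ensure they are the only points of weight greater than one. Thus $L_U$ is precisely of the type handled by Theorem~\ref{clas}, and the task reduces to choosing a convenient basis and reading off $p(x)$.

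First I would verify that $\fqn=T\oplus S$, which is exactly what Theorem~\ref{clas} requires. As $\dim_{\fq}(T)+\dim_{\fq}(S)=2t=n$, it is enough to show $T\cap S=\{0\}$. If $v+\xi\mu v^{q^s}=u+\xi u^{q^s}$ for some $u,v\in\F_{q^t}$, then comparing coordinates in the $\F_{q^t}$-basis $\{1,\xi\}$ forces $v=u$ and $\mu v^{q^s}=v^{q^s}$, i.e.\ $(\mu-1)v^{q^s}=0$; since $\N_{q^t/q}(\mu)\neq1$ implies $\mu\neq1$, we obtain $v=0$, whence $T\cap S=\{0\}$.

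Next I would fix the bases of the two summands. Let $\{u_0,\ldots,u_{t-1}\}$ be an $\fq$-basis of $\F_{q^t}$. The $\fq$-linear maps $v\mapsto v+\xi\mu v^{q^s}$ and $v\mapsto v+\xi v^{q^s}$ have trivial kernel by the same coordinate comparison, hence are isomorphisms onto $T$ and $S$ respectively; consequently $(u_\ell+\mu u_\ell^{q^s}\xi)_{\ell=0}^{t-1}$ is an $\fq$-basis of $T$ and $(u_\ell+u_\ell^{q^s}\xi)_{\ell=0}^{t-1}$ is an $\fq$-basis of $S$. Concatenating them in this order produces the ordered basis $\mathcal{B}$ of $\fqn$ displayed in the statement, whose dual basis I write as $(\lambda_0^*,\ldots,\lambda_{n-1}^*)$; should an explicit form of these coefficients be wanted, it can be recovered from Proposition~\ref{prop:dualbasesgeneral} once $\mathcal{B}$ is fixed.

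With this data, Theorem~\ref{clas} (equivalently Lemma~\ref{lem:proiez}) gives that $L_U$ is $\mathrm{P\Gamma L}(2,q^n)$-equivalent to $L_p$, where $p$ is the projection of $\fqn$ from $T$ onto $S$; since the image $S$ occupies the indices $t,\ldots,n-1$ of $\mathcal{B}$, the formula of Theorem~\ref{clas} reads
\[ p(x)=\sum_{i=t}^{n-1}\xi_i\,\mathrm{Tr}_{q^n/q}(\xi_i^*x)=\sum_{k=0}^{n-1}\left(\sum_{\ell=0}^{t-1}(u_\ell+u_\ell^{q^s}\xi)\,(\lambda_{t+\ell}^*)^{q^k}\right)x^{q^k}, \]
obtained by substituting $\xi_{t+\ell}=u_\ell+u_\ell^{q^s}\xi$ and its dual $\xi_{t+\ell}^*=\lambda_{t+\ell}^*$, yielding the asserted $p(x)$. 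The argument is therefore a direct specialization of Theorem~\ref{clas}; the only steps demanding attention are the direct-sum verification $\fqn=T\oplus S$ and the careful pairing of each $S$-basis vector with its dual-basis partner (namely, that the duals of the $S$-basis are the last $t$ elements of $\mathcal{B}^*$), so that no essential difficulty arises beyond this bookkeeping.
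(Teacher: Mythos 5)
Your proof is correct and follows essentially the same route as the paper's: verify $T_{\mu x^{q^s},\xi}\cap S_{x^{q^s},\xi}=\{0\}$ (the paper deduces this from $\mu\neq 1$ in the same way), note that the images of the $u_\ell$ under $v\mapsto v+\xi\mu v^{q^s}$ and $v\mapsto v+\xi v^{q^s}$ give $\F_q$-bases of the two summands, and apply Theorem \ref{clas}. Your bookkeeping remark is well taken: Theorem \ref{clas} pairs each $S$-basis vector $u_\ell+u_\ell^{q^s}\xi$ with the dual element $\lambda_{t+\ell}^*$ (the last $t$ entries of the dual basis), so the occurrence of $\lambda_{\ell}^*$ in the displayed formula of the statement is an index shift that your version corrects.
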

\begin{proof}
First we note that $S_{x^{q^s},\xi}\cap T_{\mu x^{q^s},\xi}=\{0\}$.
Indeed, suppose that $w \in S_{x^{q^s},\xi}\cap T_{\mu x^{q^s},\xi}\setminus\{0\}$, then there would exist $u,v \in \F_{q^t}$ such that
\[ w=u+u^{q^s}\xi=v+\mu v^{q^s}\xi, \]
i.e.\ $\mu=1$, a contradiction.
Now, since $\{u_0,\ldots,u_{t-1}\}$ is an $\fq$-basis of $\F_{q^t}$ then  $\{u_0+u_0^{q^s}\xi,\ldots,u_{t-1}+u_{t-1}^{q^s}\xi\}$ is an $\fq$-basis of $S_{x^{q^s},\xi}$ and $\{u_0+\mu u_0^{q^s}\xi,\ldots,u_{t-1}+\mu u_{t-1}^{q^s}\xi\}$ is a $\fq$-basis of $T_{\mu x^{q^s},\xi}$.
We can now apply Theorem \ref{clas} to get the assertion.
\end{proof}

\section{Polynomials defining linear sets of minimum size}\label{sec:minsize}

Recently, in \cite{DVdV}, Jena and Van de Voorde introduced the following family of linear sets having minimum size.

\begin{theorem}\cite[Theorem 2.7]{DVdV}\label{th:constructionVdV}
Let $\lambda\in \fqn \setminus \fq$ be an element generating a degree $s$-extension of $\fq$ and 
\[ L=\{ \langle (\alpha_0+\alpha_1 \lambda+\ldots+\alpha_{t_1-1}\lambda^{t_1-1},\beta_0+\beta_1 \lambda+\ldots+\beta_{t_2-1}\lambda^{t_2-1}) \rangle_{\fqn} \colon \alpha_i,\beta_i \in \fq,\,\]\[\text{not all zero},\, 1 \leq t_1,t_2, t_1+t_2 \leq s+1   \}. \]
Then $L$ is an $\fq$-linear set of $\PG(1,q^n)$ of rank $k=t_1+t_2$ with $q^{k-1}+1$ points.
Let $t_1\leq t_2$, then 
\begin{itemize}
\item the point $\langle (0,1)\rangle_{\fqn}$ has weight $t_2$;
\item there are $q^{t_2-t_1+1}$ points of weight $t_1$ different from $P$;
\item there are $q^{k-2i+1}-q^{k-2i-1}$ points of weight $i \in \{1,\ldots, t_1-1\}$.
\end{itemize}
\end{theorem}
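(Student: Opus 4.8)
The plan is to realise $L$ as a linear set of the shape \eqref{eq:formlinearset2} and to convert the weight of each point into a statement about fractions over $\fq$ of bounded degree. Set $V_j=\langle 1,\lambda,\dots,\lambda^{j-1}\rangle_{\fq}$ for $0\le j\le s$; since $\lambda$ generates $\F_{q^s}$, the minimal polynomial $g$ of $\lambda$ is irreducible of degree $s$ and the $V_j$ are nested $\fq$-subspaces of $\F_{q^s}\subseteq\fqn$ with $\dim_{\fq}V_j=j$ (note $t_1,t_2\le s$). Taking $T=V_{t_1}$, $S=V_{t_2}$ gives $L=L_U$ with $U=T\times S$, so $L$ has rank $k=t_1+t_2$, and $k\le s+1\le n$ because $s\mid n$. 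The weight of $\langle(0,1)\rangle_{\fqn}$ is at once $\dim_{\fq}(\{0\}\times V_{t_2})=t_2$, while for any affine point the intersection $U\cap\langle(1,m)\rangle_{\fqn}$ injects into $V_{t_1}$ under the first projection, so its weight is at most $t_1$; in particular $\langle(0,1)\rangle_{\fqn}$ is the only point of weight exceeding $t_1$ when $t_1<t_2$.

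Next I would pin down the affine weights. A point $\langle(1,m)\rangle_{\fqn}$ lies in $L$ with weight $w$ exactly when the $\fq$-space of pairs $(A,B)\in\fq[x]^2$ with $\deg A<t_1$, $\deg B<t_2$ and $B\equiv mA\pmod{g}$ has dimension $w$. Here the hypothesis $t_1+t_2\le s+1$ is decisive: writing $m=B_0(\lambda)/A_0(\lambda)$ in lowest terms with $\deg A_0<t_1$, $\deg B_0<t_2$ (such a representation exists for $m\in V_{t_2}V_{t_1}^{-1}$, and reducing keeps the degrees in this box), the congruence $B\equiv mA$ is equivalent to $A_0B-B_0A\equiv 0\pmod g$; since $\deg(A_0B-B_0A)\le t_1+t_2-2=k-2\le s-1<\deg g$, this congruence forces the honest identity $A_0B=B_0A$. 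Coprimality of $A_0,B_0$ then yields $A=CA_0$, $B=CB_0$ with $\deg C<\min(t_1-\deg A_0,\,t_2-\deg B_0)$, so that
\[
w_L\big(\langle(1,m)\rangle_{\fqn}\big)=\min\big(t_1-\deg A_0,\ t_2-\deg B_0\big).
\]
The same degree bound shows two reduced pairs give the same $m$ only if they are $\fq$-proportional, so the affine points of positive weight correspond bijectively (up to scalars) to coprime pairs $(A_0,B_0)$, $A_0\neq0$, in the box $\deg A_0<t_1$, $\deg B_0<t_2$.

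It then remains to count these coprime pairs by their two degrees. Using the standard count for the number of coprime pairs of prescribed degrees over $\fq$ (for degrees $a,b\ge1$ this is $(q-1)^3q^{a+b-1}$, with the obvious modifications when a degree is $0$ or when $B_0=0$), an inclusion–exclusion over the conditions $\deg A_0=t_1-i$ or $\deg B_0=t_2-i$ realising $\min=i$ gives, after dividing by $q-1$, exactly $q^{k-2i+1}-q^{k-2i-1}$ points of weight $i$ for $1\le i\le t_1-1$, and $q^{t_2-t_1+1}$ points of weight $t_1$ (the case $\deg A_0=0$, $\deg B_0\le t_2-t_1$, which absorbs $m=0$). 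Adding the point $\langle(0,1)\rangle_{\fqn}$ of weight $t_2$, the total telescopes to $q^{k-1}+1$, in agreement with the lower bound in \eqref{eq:bound}. The main obstacle is the reduction-avoidance step: everything hinges on recognising that $t_1+t_2\le s+1$ is precisely what turns the congruence modulo $g$ into a polynomial identity, eliminating any wrap-around contributions and making the reduced fraction $B_0/A_0$ an honest invariant of the point; once this is in place the coprime-pair count is routine.
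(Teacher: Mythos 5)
Your argument is correct; note that the paper itself offers no proof to compare against, since it imports this statement verbatim as \cite[Theorem 2.7]{DVdV}. Your derivation is a legitimate, self-contained one. The identification $L=L_{T\times S}$ with $T=\langle 1,\dots,\lambda^{t_1-1}\rangle_{\fq}$, $S=\langle 1,\dots,\lambda^{t_2-1}\rangle_{\fq}$, the translation of $w_L(\langle(1,m)\rangle_{\fqn})$ into the dimension of $\{(A,B)\colon \deg A<t_1,\ \deg B<t_2,\ B\equiv MA \pmod g\}$ (legitimate because any $m$ carried by an affine point of $L$ lies in $\fq(\lambda)$), and above all the observation that $t_1+t_2\le s+1$ forces $\deg(A_0B-B_0A)\le k-2<\deg g$, so that the congruence collapses to the identity $A_0B=B_0A$ and coprimality gives $w_L(\langle(1,m)\rangle_{\fqn})=\min(t_1-\deg A_0,\,t_2-\deg B_0)$ --- this is exactly the right mechanism, and it is also what makes the reduced pair $(A_0,B_0)$ a well-defined invariant of the point up to $\fq^*$. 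I verified the counts you left implicit: with $(q-1)^2q^{a+b-1}$ scalar classes of coprime pairs of degrees $a,b\ge 1$, $(q-1)q^{b}$ classes when $\deg A_0=0$, and the single class $(1,0)$, the branch $\deg A_0=t_1-i$, $\deg B_0\le t_2-i$ contributes $(q-1)q^{k-2i}$ and the branch $\deg B_0=t_2-i$, $\deg A_0<t_1-i$ contributes $(q-1)q^{k-2i-1}$, summing to $q^{k-2i+1}-q^{k-2i-1}$ for $1\le i\le t_1-1$; the case $\deg A_0=0$, $\deg B_0\le t_2-t_1$ (together with $m=0$) gives $q^{t_2-t_1+1}$ points of weight $t_1$; and the totals telescope to $q^{k-1}+1$. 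One parenthetical slip: $s+1\le n$ fails when $s=n$, but this is harmless since no step of your proof needs $k\le n$ --- only $k\le s+1$ is used, via the degree bound.
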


In particular, when $k=n$, the linear set constructed in Theorem \ref{th:constructionVdV} has rank $n$ and $\{1,\lambda,\ldots,\lambda^{n-1}\}$ is an $\fq$-basis of $\fqn$. In such a case, there exists a $q$-polynomial $f(x) \in \mathcal{L}_{n,q}$ such that $L$ is $\mathrm{P}\Gamma\mathrm{L}(2,q^n)$-equivalent to
\[ L_f=\{ \langle (x,f(x))\rangle_{\fqn} \colon x \in \fqn^* \}. \]
The open problem (A) of \cite{DVdV} regards the determination of such polynomials.
Our aim now is to find such $f(x)$'s.

Let $t_2=s$ and $t=t_1=n-s$. Observe that by applying the projectivity $\varphi$ defined by $\left(\begin{array}{cc} 1 & \lambda^t \\ 0 & \lambda^t \end{array}\right)$ to the linear set $L$, we obtain
\[ \varphi(L)=\{ \langle (\alpha_0+\alpha_1 \lambda+\ldots+\alpha_{t-1}\lambda^{t-1}+\beta_0\lambda^t+\beta_1 \lambda^{t+1}+\ldots+\beta_{s-1}\lambda^{n-1},\]\[\beta_0\lambda^t+\beta_1 \lambda^{t+1}+\ldots+\beta_{s-1}\lambda^{n-1}) \rangle_{\fqn} \colon \alpha_i,\beta_i \in \fq,\,\text{not all zero} \}. \]
Hence, $\varphi(L)=L_{p_{T,S}}$ where $T=\langle 1,\lambda,\ldots,\lambda^{t-1}\rangle_{\fq}$, $S=\langle \lambda^t,\ldots,\lambda^{n-1} \rangle_{\fq}$ and $p_{T,S}$ is the projection map from $T$ onto $S$. 
Therefore, we can now apply Theorem \ref{clas} to such linear sets.

\begin{theorem}\label{th:polminsize}
Let $\lambda\in \fqn \setminus \fq$ such that $\fq(\lambda)=\fqn$ and let $f(x)=a_0+a_1x+\ldots+a_{n-1}x^{n-1}+x^n\in \fq[x]$ be the minimal polynomial of $\lambda$ over $\fq$ and $\delta=f'(\lambda)$. 
Let 
\[ L=\{ \langle (\alpha_0+\alpha_1 \lambda+\ldots+\alpha_{t-1}\lambda^{t-1},\beta_0+\beta_1 \lambda+\ldots+\beta_{n-t-1}\lambda^{n-t-1}) \rangle_{\fqn} \colon \alpha_i,\beta_i \in \fq,\,\]\[\text{not all zero}  \}. \] 
Then $L$ is $\mathrm{PG}\mathrm{L}(2,q^n)$-equivalent to 
\[ L_p=\left\{\left\langle \left(x,p(x)\right)\right\rangle_{\fqn} \colon x \in \fqn \right\}, \]
where 
\[ p(x)=\sum_{i=t}^{n-1}\lambda^i\mathrm{Tr}_{q^n/q}((\lambda^i)^* x)=\sum_{j=0}^{n-1}A_jx^{q^j} \]
and
\[A_j=\frac{1}{\delta^{q^j}}\sum_{i=t}^{n-1}\sum_{h=1}^{n-i} \lambda^{i+(h-1)q^j} a_{i+h}^{q^j}.\]
\end{theorem}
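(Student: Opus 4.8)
The plan is to reduce the statement to a direct application of Theorem \ref{clas} combined with the explicit dual basis furnished by Corollary \ref{cor:dualbasis}. First I would invoke the reduction performed immediately before the statement: applying the projectivity $\varphi$ associated with $\left(\begin{smallmatrix} 1 & \lambda^t \\ 0 & \lambda^t \end{smallmatrix}\right)$ shows that $L$ is $\mathrm{PGL}(2,q^n)$-equivalent to $L_{p_{T,S}}$, where $T=\langle 1,\lambda,\ldots,\lambda^{t-1}\rangle_{\fq}$, $S=\langle \lambda^t,\ldots,\lambda^{n-1}\rangle_{\fq}$, and $p_{T,S}$ is the projection of $\fqn$ from $T$ onto $S$. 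The hypothesis $\fq(\lambda)=\fqn$ guarantees that $\{1,\lambda,\ldots,\lambda^{n-1}\}$ is an $\fq$-basis of $\fqn$, hence $\fqn=T\oplus S$ and the projection map is well defined, so that Theorem \ref{clas} indeed applies.

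Second, I would specialize the formula of Theorem \ref{clas} to the polynomial basis $\mathcal{B}=(\xi_0,\ldots,\xi_{n-1})=(1,\lambda,\ldots,\lambda^{n-1})$, i.e.\ $\xi_i=\lambda^i$. This yields
\[ p(x)=\sum_{i=t}^{n-1}\lambda^i\mathrm{Tr}_{q^n/q}\left((\lambda^i)^*x\right)=\sum_{j=0}^{n-1}A_jx^{q^j},\qquad A_j=\sum_{i=t}^{n-1}\lambda^i\left((\lambda^i)^*\right)^{q^j}, \]
where $(\lambda^i)^*=\xi_i^*$ is the $i$-th element of the dual basis $\mathcal{B}^*$. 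The first displayed expression in the statement is exactly the leftmost form above, so it only remains to expand the coefficients $A_j$.

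Third, I would substitute the dual basis computed in Corollary \ref{cor:dualbasis}, namely $\xi_i^*=\delta^{-1}\gamma_i$ with $\delta=f'(\lambda)$ and $\gamma_i=\sum_{h=1}^{n-i}\lambda^{h-1}a_{i+h}$. Raising to the $q^j$-th power and using that each coefficient $a_{i+h}$ lies in $\fq$, so that $a_{i+h}^{q^j}=a_{i+h}$, one gets $\left(\xi_i^*\right)^{q^j}=\delta^{-q^j}\sum_{h=1}^{n-i}\lambda^{(h-1)q^j}a_{i+h}$. Inserting this into the expression for $A_j$ and pulling out the common factor $\delta^{-q^j}$ gives
\[ A_j=\frac{1}{\delta^{q^j}}\sum_{i=t}^{n-1}\sum_{h=1}^{n-i}\lambda^{i+(h-1)q^j}a_{i+h}, \]
which is the asserted formula (writing $a_{i+h}^{q^j}$ in place of $a_{i+h}$ is harmless, as these coefficients are $\fq$-rational).

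The argument is essentially a verification: the real content has already been packaged into Theorem \ref{clas} and Corollary \ref{cor:dualbasis}, and what remains is bookkeeping. Accordingly I do not expect any genuine obstacle; the only points demanding a little care are keeping track of the two summation indices $i$ and $h$, and checking that the Frobenius $q^j$ acts solely on $\lambda$ and on $\delta$ but fixes the $\fq$-coefficients $a_{i+h}$, so that the two displayed forms of $A_j$ coincide.
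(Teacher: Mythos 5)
Your proposal is correct and follows essentially the same route as the paper: the reduction via the projectivity $\left(\begin{smallmatrix} 1 & \lambda^t \\ 0 & \lambda^t \end{smallmatrix}\right)$ to the projection map $p_{T,S}$, then Theorem \ref{clas} together with the dual basis from Corollary \ref{cor:dualbasis}, and the same bookkeeping to expand $A_j$. The extra remark that $a_{i+h}^{q^j}=a_{i+h}$ is a harmless clarification the paper leaves implicit.
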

\begin{proof}
As already seen, $L$ is $\mathrm{PGL}(2,q^n)$-equivalent to the linear set $L_{p_{T,S}}$, where $T=\langle 1,\lambda,\ldots,\lambda^{t-1}\rangle_{\fq}$, $S=\langle \lambda^t,\ldots,\lambda^{n-1} \rangle_{\fq}$ and $p_{T,S}$ is the projection map from $T$ onto $S$. 
Hence the ordered $\fq$-basis $\mathcal{B}=(1,\lambda,\ldots,\lambda^{n-1})$ is a polynomial basis of $\fqn$, and so $\fq(\lambda)=\fqn$.
Let $g(x)=\frac{f(x)}{x-\lambda}=\gamma_0+\gamma_1x+\gamma_{n-1}x^{n-1}\in\fqn[x]$ and $\delta=f'(\lambda)$. By Corollary \ref{cor:dualbasis}, the dual basis of $\mathcal{B}$ is 
\[\mathcal{B}^*=(\delta^{-1}\gamma_0,\ldots,\delta^{-1}\gamma_{n-1}),\]
with 
\[ \gamma_i=\sum_{j=1}^{n-i} \lambda^{j-1}a_{i+j}, \]
for every $i \in \{0,\ldots, n-1\}$.
By Theorem \ref{clas}, we have
\[ p(x)=p_{T,S}(x)=\sum_{i=t}^{n-1}\lambda^i\mathrm{Tr}_{q^n/q}((\lambda^i)^* x)=\sum_{j=0}^{n-1} A_j x^{q^j}, \]
where
\[ A_j=\sum_{i=t}^{n-1}\frac{\lambda^i}{\delta^{q^j}} \gamma_i^{q^j} =\sum_{i=t}^{n-1}\frac{\lambda^i}{\delta^{q^j}} \left(\sum_{h=1}^{n-i} \lambda^{h-1}a_{i+h}\right)^{q^j} =\frac{1}{\delta^{q^j}}\sum_{i=t}^{n-1}\sum_{h=1}^{n-i} \lambda^{i+(h-1)q^j} a_{i+h}^{q^j}. \]
\end{proof}

When the fixed basis $\mathcal{B}$ is weakly self dual the coefficients of $p(x)$ can be reduced to a simpler form.

\begin{corollary}\label{cor:polminsizespecial}
Let $\lambda\in \fqn \setminus \fq$ such that $\fq(\lambda)=\fqn$.
\begin{itemize}
    \item If $f(x)=x^n-d\in \fq[x]$ is the minimal polynomial of $\lambda$ over $\fq$, then the coefficients of the polynomial $p(x)$ of Theorem \ref{th:polminsize} are
    \[A_j=\frac{1}n \sum_{i=t}^{n-1}\lambda^{i(1-q^j)}.\]
    \item If $f(x)=x^n-cx-1\in \fq[x]$ is the minimal polynomial of $\lambda$ over $\fq$, then the coefficients of the polynomial $p(x)$ of Theorem \ref{th:polminsize} are
    \[ A_j=\frac{-c+\lambda^{q^j(n-k)}}{n\lambda^{q^j(n-1)}-ck\lambda^{q^j(k-1)}} \left( \sum_{i=t}^{k-1} \lambda^{i+q^j(k-i-1)} +\sum_{i=k}^{n-1} \lambda^{i+q^j(n+k-i-1)} \right) \]
    if $t<k$, and 
    \[ A_j=\frac{-c+\lambda^{q^j(n-k)}}{n\lambda^{q^j(n-1)}-ck\lambda^{q^j(k-1)}}  \left( 
    \sum_{i=t}^{n-1} \lambda^{i+q^j(n+k-i-1)} \right) \]
    if $t\geq k$.
\end{itemize}
\end{corollary}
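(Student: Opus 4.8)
The plan is to obtain both formulae by feeding the explicit dual bases computed in Corollary \ref{cor:binetrin} into the polynomial description of the projection map furnished by Theorem \ref{clas}. Recall that, by Theorem \ref{th:polminsize}, the linear set $L$ is $\mathrm{PGL}(2,q^n)$-equivalent to $L_p$ with $p(x)=p_{T,S}(x)$, and that Theorem \ref{clas} expresses the coefficient of $x^{q^j}$ in this projection map as
\[ A_j=\sum_{i=t}^{n-1}\lambda^i\,(\xi_i^*)^{q^j}, \]
where $\xi_i=\lambda^i$ and $(\xi_0^*,\ldots,\xi_{n-1}^*)$ is the dual basis of $\mathcal{B}=(1,\lambda,\ldots,\lambda^{n-1})$. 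Since $f(x)\in\fq[x]$, the integers $n,k$ and the coefficients $c,d$ all lie in $\fq$ and are therefore fixed by the Frobenius $x\mapsto x^{q^j}$; this observation drives every simplification below.

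For the binomial case $f(x)=x^n-d$, I would substitute $\xi_i^*=\lambda^{n-i}/(nd)$ from the first item of Corollary \ref{cor:binetrin}. Raising to the $q^j$-th power and using $(nd)^{q^j}=nd$ together with $\lambda^{nq^j}=d^{q^j}=d$ (as $\lambda^n=d\in\fq$) collapses each summand:
\[ \lambda^i(\xi_i^*)^{q^j}=\lambda^i\,\frac{\lambda^{(n-i)q^j}}{nd}=\lambda^i\,\frac{d\,\lambda^{-iq^j}}{nd}=\frac1n\,\lambda^{i(1-q^j)}, \]
and summing over $i\in\{t,\ldots,n-1\}$ yields the stated expression for $A_j$.

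For the trinomial case $f(x)=x^n-cx^k-1$, the dual basis from the second item of Corollary \ref{cor:binetrin} is piecewise, namely $\xi_i^*=\delta^{-1}\lambda^{k-1-i}$ for $i<k$ and $\xi_i^*=\delta^{-1}\lambda^{n+k-1-i}$ for $i\ge k$, with $\delta=(n\lambda^{n-1}-ck\lambda^{k-1})/(-c+\lambda^{n-k})$. The approach is to split the summation range $\{t,\ldots,n-1\}$ at the index $i=k$: when $t<k$ the sum acquires the two pieces $\sum_{i=t}^{k-1}$ and $\sum_{i=k}^{n-1}$, while when $t\ge k$ only the second piece survives. Because $n,c,k\in\fq$, the quantity $(\delta^{-1})^{q^j}$ equals $(-c+\lambda^{(n-k)q^j})/(n\lambda^{(n-1)q^j}-ck\lambda^{(k-1)q^j})$, which is precisely the prefactor displayed in the statement; the remaining $\lambda$-powers reassemble into the two sums written there.

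The bookkeeping is entirely routine, so there is no genuine obstacle; the only point requiring care is the consistent indexing of the piecewise dual basis in the trinomial case---reading off which exponent of $\lambda$ attaches to $\xi_i^*$ on each side of $i=k$---and the clean separation of the $\fq$-rational data $(n,c,k,d)$, which Frobenius fixes, from the powers of $\lambda$, which it does not.
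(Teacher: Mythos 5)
Your proposal is correct and follows essentially the same route as the paper: substitute the explicit dual bases from Corollary \ref{cor:binetrin} into the coefficient formula $A_j=\sum_{i=t}^{n-1}\lambda^i(\xi_i^*)^{q^j}$ coming from Theorem \ref{clas} (as instantiated in Theorem \ref{th:polminsize}), use $\lambda^n=d$ and the $\F_q$-rationality of $n,c,k,d$ to push Frobenius through, and split the trinomial sum at $i=k$. The computations you outline match the paper's verbatim, including the implicit reading of the trinomial as $x^n-cx^k-1$.
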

\begin{proof}
First suppose that $f(x)=x^n-d$. By Corollary \ref{cor:binetrin}, the dual basis of $\mathcal{B}=(1,\lambda,\ldots,\lambda^{n-1})$ is given by
 \[ \mathcal{B}^*=(\delta^{-1}\lambda^{n-1},\ldots,\delta^{-1}\lambda,\delta^{-1}), \]
with $\delta=n\lambda^{n-1}$.
By applying Theorem \ref{clas}, then $\displaystyle p(x)=\sum_{i=t}^{n-1}\lambda^i\mathrm{Tr}_{q^n/q}((\lambda^i)^*x)=\sum_{i=t}^{n-1}\lambda^i\mathrm{Tr}_{q^n/q}(\delta^{-1} \lambda^{n-i-1} x)$ and the assertion follows.
Now, assume that $f(x)=x^n-cx-1$. By Corollary \ref{cor:binetrin}, the dual basis of $\mathcal{B}=(1,\lambda,\ldots,\lambda^{n-1})$ is given by
\[ \mathcal{B}^*=(\delta^{-1}\lambda^{k-1},\delta^{-1}\lambda^{k-2},\ldots,\delta^{-1},\delta^{-1}\lambda^{n-1},\ldots,\delta^{-1}\lambda^k), \]
where $\delta=\frac{n\lambda^{n-1}-ck\lambda^{k-1}}{-c+\lambda^{n-k}}$.
By applying Theorem \ref{clas}, then $\displaystyle p(x)=\sum_{i=t}^{n-1}\lambda^i\mathrm{Tr}_{q^n/q}((\lambda^i)^*x)$. If $t < k$ then
\[ p(x)=\sum_{i=t}^{k-1}\lambda^i\mathrm{Tr}_{q^n/q}((\lambda^i)^*x)+\sum_{i=k}^{n-1}\lambda^i\mathrm{Tr}_{q^n/q}((\lambda^i)^*x)= \]
\[ \sum_{i=t}^{k-1}\lambda^i\mathrm{Tr}_{q^n/q}(\delta^{-1}\lambda^{k-i-1}x)+\sum_{i=k}^{n-1}\lambda^i\mathrm{Tr}_{q^n/q}(\delta^{-1}\lambda^{n+k-i-1}x), \]
and if $t\geq k$ then
\[ p(x)=\sum_{i=t}^{n-1}\lambda^i\mathrm{Tr}_{q^n/q}(\delta^{-1}\lambda^{n+k-i-1}x). \]
So the assertion holds.
\end{proof}

\section*{Acknowledgements}

The research was supported by the project ``VALERE: Vanvitelli pEr la RicErca" of the University of Campania ``Luigi Vanvitelli'' and was partially supported by the Italian National Group for Algebraic and Geometric Structures and their Applications (GNSAGA - INdAM).

\begin{small}

\end{small}

\newpage

\appendix
\section{Polynomial shape of linear sets in $\mathrm{PG}(1,q^4)$}\label{sec:pg1q4}

In \cite{BoPol} $\fq$-linear blocking sets of $\mathrm{PG}(2,q^4)$ were classified; see also \cite{PolitoPolverino}.
The authors also showed that $\fq$-linear blocking sets in $\mathrm{PG}(2,q^n)$ are \emph{simple}, that is if $L_U$ and $L_W$ are two $\mathrm{P}\Gamma\mathrm{L}(3,q^n)$-equivalent $\fq$-linear blocking sets in $\mathrm{PG}(2,q^n)$ if and only $U$ and $W$ are $\Gamma\mathrm{L}(3,q^n)$-equivalent.
So that, from the above mentioned classification follows the classification of $\fq$-linear set of rank $4$ in $\mathrm{PG}(1,q^4)$; see also \cite[Corollary 5.4]{BMZZ}.

In \cite[5. Section 6]{CsMPq5} and in \cite[Open Problem 8.4]{BMZZ} was stated the problem of determining the polynomial shape of the $\fq$-linear sets of rank $4$ of $\mathrm{PG}(1,q^4)$.
This section is devoted to explicitly solve this problem.
Using the notation of \cite{BoPol} we ony need to determine the polynomial shape of $L_{C_{12}}$, $L_{C_{15}}$, $L_{B_{22}}$ and  $L_{C_{13}}$.

By \cite[Proposition 2.9]{DVdV}, $L_{C_{12}}$ corresponds to the linear sets described in Section \ref{sec:minsize}, so that its polynomial shape is a direct consequence of Theorem \ref{th:polminsize}.


Regarding the linear set $L_{B_{22}}$, it is defined by a subspace $U$ of the form $U=T\times S$,
where $T=\F_{q^2}$ and $S$ is a scattered subspace in $\F_{q^4}$ w.r.t.\ $\F_{q^2}$ (that is, $\dim_{\fq}(S\cap \langle u \rangle_{\F_{q^2}})\leq 1$, for every $u \in \F_{q^4}$). 
Hence, $L_{B_{22}}$ is $\mathrm{P}\Gamma\mathrm{L}(2,q^4)$-equivalent to the linear set introduced in Corollary \ref{cor:ex1} and its polynomial shape can be determined using Theorem \ref{th:pol2w}.
For the remaining cases we can use the above machinery, obtaining the following results.




\begin{proposition}
Let $L_{C_{13}}$ be as in \cite{BoPol}. Let $\lambda \in \F_{q^4}$ be such that $\mathcal{B}=(1,\eta_1,\lambda,\lambda \eta_2)$ is an $\F_q$-basis of $\F_{q^4}$. Let $\mathcal{B}^*=(\xi_0^*,\xi_1^*,\xi_2^*,\xi_{3}^*)$ be the dual basis of $\mathcal{B}$. Then $L_{C_{13}}$ is $\mathrm{P}\Gamma\mathrm{L}(2,q^4)$-equivalent to $L_p=\{\langle (x,p(x))\rangle_{\F_{q^n}}:x \in \fqn^*\setminus \{\boldsymbol{0} \}\},$ where 
\[
p(x)=\sum_{j=0}^{3}\left( \xi_2^{*q^j} + \eta_2 \xi_3^{*q^j} \right)x^{q^j}.
\]
\end{proposition}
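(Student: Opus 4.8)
The plan is to recognize $L_{C_{13}}$ as a linear set of the shape treated in Section~\ref{sec:projection} and then read $p(x)$ off directly from Theorem~\ref{clas}. By the classification in \cite{BoPol}, together with the earlier observation that $L_{C_{13}}$ belongs to the family of Corollary~\ref{cor:ex2}, $L_{C_{13}}$ is an $\fq$-linear set of rank $4$ in $\PG(1,q^4)$ possessing exactly two points of weight $2$ and all others of weight one. It therefore satisfies the hypotheses of Theorem~\ref{clas} with $n=4$ and $t=s=2$: by Proposition~\ref{th:UasUST} it is $\mathrm{PGL}(2,q^4)$-equivalent to $L_U$ with $U=T\times S$ and $T\cap S=\{0\}$, and since $\dim_{\fq}T+\dim_{\fq}S=4=n$ we have $\F_{q^4}=T\oplus S$, so $L_U$ is $\mathrm{PGL}(2,q^4)$-equivalent to $L_{p_{T,S}}$.

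First I would fix, from the explicit description of $C_{13}$, the subspaces $T=\langle 1,\eta_1\rangle_{\fq}$ and $S=\langle \lambda,\lambda\eta_2\rangle_{\fq}=\lambda\langle 1,\eta_2\rangle_{\fq}$, so that the ordered tuple $\mathcal{B}=(\xi_0,\xi_1,\xi_2,\xi_3)=(1,\eta_1,\lambda,\lambda\eta_2)$ is an $\fq$-basis of $\F_{q^4}$ whose first two entries span $T$ and whose last two span $S$. The very hypothesis that $\mathcal{B}$ is a basis is precisely the direct-sum condition $\F_{q^4}=T\oplus S$ required above. Applying Theorem~\ref{clas} with $t=s=2$ then yields
\[
p_{T,S}(x)=\sum_{j=0}^{3}\left(\xi_2\,\xi_2^{*q^j}+\xi_3\,\xi_3^{*q^j}\right)x^{q^j}
=\sum_{j=0}^{3}\left(\lambda\,\xi_2^{*q^j}+\lambda\eta_2\,\xi_3^{*q^j}\right)x^{q^j},
\]
where $\mathcal{B}^*=(\xi_0^*,\xi_1^*,\xi_2^*,\xi_3^*)$ is the dual basis of $\mathcal{B}$.

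The final step is to remove the common factor. Since $\xi_2=\lambda$ and $\xi_3=\lambda\eta_2$, I can factor $\lambda$ out of both summands to obtain $p_{T,S}(x)=\lambda\,p(x)$, with $p(x)=\sum_{j=0}^{3}\bigl(\xi_2^{*q^j}+\eta_2\,\xi_3^{*q^j}\bigr)x^{q^j}$ exactly as in the statement. Finally, $L_{p_{T,S}}=L_{\lambda p}$, and the diagonal collineation $\mathrm{diag}(1,\lambda^{-1})\in\mathrm{PGL}(2,q^4)$ sends $\langle(x,\lambda p(x))\rangle_{\F_{q^4}}$ to $\langle(x,p(x))\rangle_{\F_{q^4}}$; hence $L_{p_{T,S}}$ is $\mathrm{PGL}(2,q^4)$-equivalent to $L_p$. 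Chaining the equivalences gives that $L_{C_{13}}$ is $\mathrm{P}\Gamma\mathrm{L}(2,q^4)$-equivalent to $L_p$.

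I expect the main obstacle to be the bookkeeping in the first step: pinning down, from the original description of $C_{13}$ in \cite{BoPol}, the precise parameters $\eta_1,\eta_2,\lambda$ (and the intertwining collineation) so that $T$ and $S$ acquire the stated basis form, and in particular verifying $T\oplus S=\F_{q^4}$ so that Theorem~\ref{clas} is applicable. Once $T$, $S$ and $\mathcal{B}$ are in place, the computation of $p_{T,S}$ is a direct instance of Theorem~\ref{clas}, and the passage to $p$ is only the elementary rescaling above.
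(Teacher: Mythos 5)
Your proposal is correct and follows the same route the paper intends: the appendix result is obtained by feeding the subspaces $T=\langle 1,\eta_1\rangle_{\fq}$ and $S=\langle\lambda,\lambda\eta_2\rangle_{\fq}$ (the latter normalized via Proposition \ref{th:UasUST}) into Theorem \ref{clas}. Your extra step of factoring out $\lambda$ and absorbing it with the collineation $\mathrm{diag}(1,\lambda^{-1})$ correctly accounts for the discrepancy between the raw output $\sum_{j}\bigl(\lambda\xi_2^{*q^j}+\lambda\eta_2\xi_3^{*q^j}\bigr)x^{q^j}$ of Theorem \ref{clas} and the stated coefficients.
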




\begin{proposition}
Let $L_{C_{15}}$ be as in \cite{BoPol}. Let $\lambda \in \F_{q^4}$ be such that $\mathcal{B}=(-\eta_1,\lambda, 1, -\lambda \eta_1-\eta_2)$ is an $\F_q$-basis of $\F_{q^4}$. Let $\mathcal{B}^*=(\xi_0^*,\xi_1^*,\xi_2^*,\xi_{3}^*)$ be the dual basis of $\mathcal{B}$. Then $L_{C_{15}}$ is $\mathrm{P}\Gamma\mathrm{L}(2,q^4)$-equivalent to $L_p=\{\langle (x,p(x))\rangle_{\F_{q^n}}:x \in \fqn^*\setminus \{\boldsymbol{0} \}\},$ where 
\[
p(x)=\mathrm{Tr}_{q^n/q}(\xi_1^*x)+ \eta_1 \mathrm{Tr}_{q^n/q}(\xi_3^* x).
\]
\end{proposition}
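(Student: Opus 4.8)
The plan is to produce $p$ directly by interpolating the defining $\fq$-subspace against the given basis, using the same dual-basis device that underlies Lemma~\ref{lem:proiez} and Theorem~\ref{clas}, rather than through a projection argument. First I would return to \cite{BoPol} and rewrite the $\fq$-subspace $U$ with $L_U=L_{C_{15}}$ as the $\fq$-span of four vectors whose first coordinates are precisely $\xi_0=-\eta_1$, $\xi_1=\lambda$, $\xi_2=1$ and $\xi_3=-\lambda\eta_1-\eta_2$. Reading off the matching second coordinates $b_0,b_1,b_2,b_3\in\F_{q^4}$ from the generators of $U$ is the decisive step; I expect them to be $b_0=b_2=0$, $b_1=1$ and $b_3=\eta_1$, so that
\[
U=\langle(-\eta_1,0),\,(\lambda,1),\,(1,0),\,(-\lambda\eta_1-\eta_2,\eta_1)\rangle_{\fq}.
\]

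Granting this identification, the rest is short. Since $\mathcal{B}=(\xi_0,\xi_1,\xi_2,\xi_3)$ is an $\fq$-basis of $\F_{q^4}$, the four first coordinates span $\F_{q^4}$, so the projection of $U$ onto the first factor is surjective and, as $\dim_{\fq}U=4$, bijective; hence $U\cap\langle(0,1)\rangle_{\F_{q^4}}=\{0\}$ and $U$ is the graph of a $q$-polynomial $p$. Thus $L_{C_{15}}$ is $\mathrm{PGL}(2,q^4)$-equivalent to $L_p$, where $p(\xi_i)=b_i$ for every $i$. The dual-basis relations $\mathrm{Tr}_{q^4/q}(\xi_i^*\xi_j)=\delta_{ij}$ then force $p(x)=\sum_{i=0}^{3}b_i\,\mathrm{Tr}_{q^4/q}(\xi_i^*x)$, and inserting the values of the $b_i$ collapses this sum to $p(x)=\mathrm{Tr}_{q^4/q}(\xi_1^*x)+\eta_1\,\mathrm{Tr}_{q^4/q}(\xi_3^*x)$, which is the asserted polynomial.

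The main obstacle lies entirely in the first paragraph: correctly transcribing the Bonoli--Polverino description of $C_{15}$ into the pairs $(\xi_i,b_i)$, and checking that the listed first coordinates genuinely form an $\fq$-basis, since this is what licenses the interpolation formula. One conceptual point deserves mention. In contrast with $L_{C_{13}}$, here $\ker p=\mathrm{Im}\,p=\langle1,\eta_1\rangle_{\fq}$, so $p$ is nilpotent and $L_{C_{15}}$ carries the single point $\langle(1,0)\rangle_{\F_{q^4}}$ of weight $2$ rather than two points of complementary weight; consequently Theorem~\ref{clas} does not apply verbatim, and it is the unspecialised interpolation against a full basis---of which the projection map of Lemma~\ref{lem:proiez} is merely the case $b_i\in\{0,\xi_i\}$---that carries the proof. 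Once the generators are pinned down, all that remains is the one-line evaluation above.
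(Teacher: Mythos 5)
Your argument is correct and is, in substance, the proof the paper intends: the appendix offers no written proof of this proposition beyond the remark that ``the above machinery'' applies, and the machinery in question is exactly the dual-basis interpolation you use. Your one genuinely useful clarification is the observation that Theorem \ref{clas} cannot be invoked verbatim here: $L_{C_{15}}$ has a single point of weight $2$ and hence no pair of points of complementary weights (indeed $\ker p=\mathrm{Im}\,p=\langle 1,\eta_1\rangle_{\fq}$, so $p$ is nilpotent rather than idempotent), so one needs the general formula $p(x)=\sum_i b_i\,\mathrm{Tr}_{q^4/q}(\xi_i^*x)$ for the unique $q$-polynomial interpolating $\xi_i\mapsto b_i$, of which the projection map of Lemma \ref{lem:proiez} is the special case $b_i\in\{0,\xi_i\}$; the paper leaves this generalization implicit. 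The only step you have not actually carried out is the decisive one you yourself flag: verifying against \cite{BoPol} that the subspace defining $C_{15}$ really is, up to the action of $\mathrm{GL}(2,q^4)$, $\langle(-\eta_1,0),(\lambda,1),(1,0),(-\lambda\eta_1-\eta_2,\eta_1)\rangle_{\fq}$. You reverse-engineer these generators from the target formula rather than derive them, so as written your argument establishes that the stated $p$ is the interpolating polynomial of that particular subspace, but the identification of that subspace with $C_{15}$ still rests on an unchecked transcription.
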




By summing up, we obtain the following result.

\begin{theorem}
Let $L_U$ be an $\fq$-linear set of rank $4$ in $\PG(1,q^4)$. 
\begin{itemize}
    \item[(o)] If $|L_U|=q^2+1$, then $L_U$ is a Baer subline of ${\rm PG}(1,q^4)$ and all points of $L_U$ have weight $2$.
    \item [(i)] If $|L_U|=q^3+q^2+q+1$, then all the points have weight $1$ and $L_U$ is $\mathrm{P}\Gamma\mathrm{L}(2,q^4)$-equivalent to $L_{x^q+\delta x^{q^3}}$ for some $\delta \in \mathbb{F}_{q^4}$ with $\N_{q^4/q}(\delta)\ne 1$.
    \item [(ii)] If $|L_U|=q^3+1$, then $L_U$ is $\mathrm{P}\Gamma\mathrm{L}(2,q^4)$-equivalent either to $L_{\mathrm{Tr}_{q^4/q}(x)}$ 
    and $L_U$ has $1$ point of weight $3$ and $q^3$ points of weight $1$;
    or to
    $L_p$ where $p(x)=A_0 x+A_1x^q+A_2x^{q^2}+A_3x^{q^3}$,
    \[ A_j=\lambda^2 a_3^{q^j}+\lambda^{2+q^j}a_0^{q^j}+\lambda^3 a_0^{q^j}, \]
    $j \in \{0,1,2,3\}$, $\{1,\lambda,\lambda^2,\lambda^3\}$ is an $\fq$-basis of $\F_{q^4}$, $f(x)=a_0+a_1x+a_2x^2+a_{3}x^{3}+x^4\in \fq[x]$ is the minimal polynomial of $\lambda$ over $\fq$ and $\delta=f'(\lambda)$, and $L_U$ has $q+1$ points of weight $2$ and $q^3-q$ points of weight $1$.
    \item [(iii)] If $|L_U|=q^3+q^2+1$, then $L_U$ has $1$ point of weight $2$ and $q^3+q^2$ points of weight $1$ and $L_U$ is $\mathrm{P}\Gamma\mathrm{L}(2,q^4)$-equivalent either to $L_{x^q-x^{q^3}}$ or to $L_p$ where 
    \[
    p(x)=\mathrm{Tr}_{q^n/q}(\xi^*x)+ \eta_1 \mathrm{Tr}_{q^n/q}(\xi_3^* x),
    \]
    $(-\eta_1,\lambda, 1, -\lambda \eta_1-\eta_2)$ is an ordered $\F_q$-basis of $\F_{q^4}$, $\eta_1 \notin \F_{q^2}$ and $(\xi_0^*,\xi_1^*,\xi_2^*,\xi_{3}^*)$ is its dual basis.
    \item [(iv)] If $|L_U|=q^3+q^2-q+1$, then  $L_U$ has $2$ points of weight $2$ and $q^3+q^2-q-1$ points of weight $1$ and $L_U$ is $\mathrm{P}\Gamma\mathrm{L}(2,q^4)$-equivalent either to $L_p$ where 
    \[ p(x)=\frac{A_0+\overline{\xi}^{q^2}}{\overline{\xi}^{q^2}-\overline{\xi}}x+\frac{A_1}{\overline{\xi}^{q^3}-\overline{\xi}}x^q+\frac{A_0+\overline{\xi}}{\overline{\xi}-\overline{\xi}^{q^2}}x^{q^2}+\frac{A_1^{q^2}}{\overline{\xi}^{q}-\overline{\xi}^{q^2}}x^{q^3}, \]
    $\{1,\overline{\xi}\}$ is an $\F_{q^2}$-basis of $\F_{q^4}$ and with $A_0,A_1 \in \F_{q^2}$ and $A_1\ne 0$, or to
    $L_p$ with
    \[
    p(x)=\sum_{j=0}^{3}\left( \xi_2^{*q^j} + \eta_2 \xi_3^{*q^j} \right)x^{q^j},
    \]
    $(1,\eta_1,\lambda,\lambda \eta_2)$ is an ordered $\F_q$-basis of $\F_{q^4}$ having $\eta_1,\eta_2 \in \F_{q^4}\setminus\F_{q^2}$ and $(\xi_0^*,\xi_1^*,\xi_2^*,\xi_{3}^*)$ is its dual basis of $\mathcal{B}$. 
\end{itemize}
\end{theorem}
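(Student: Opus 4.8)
The plan is to read the statement off from the known classification of rank-$4$ $\fq$-linear sets of $\PG(1,q^4)$ together with the polynomial descriptions assembled earlier in this appendix; no new geometric input is needed, the task being to reorganise these data by cardinality.

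First I would invoke \cite{BoPol}: the $\fq$-linear blocking sets of $\PG(2,q^4)$ are completely classified and are \emph{simple}, and from this one obtains a complete list of rank-$4$ $\fq$-linear sets of $\PG(1,q^4)$ up to $\mathrm{P\Gamma L}(2,q^4)$-equivalence (see also \cite[Corollary 5.4]{BMZZ}); simplicity allows me to pass freely between $\mathrm{P\Gamma L}(2,q^4)$-equivalence of the linear sets and $\mathrm{\Gamma L}(2,q^4)$-equivalence of the representing $\fq$-subspaces. A set of representatives is given by the Baer subline $\PG(1,q^2)$, the scattered linear sets, the club $L_{\mathrm{Tr}_{q^4/q}}$, the linear set $L_{x^q-x^{q^3}}$, and $L_{C_{12}},L_{C_{13}},L_{C_{15}},L_{B_{22}}$ in the notation of \cite{BoPol}.

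Next I would record, for each representative, its weight distribution (read from \cite{BoPol}) and compute $|L_U|$ via \eqref{eq:pesicard} and \eqref{eq:pesivett}; this is exactly what produces the grouping (o)--(iv). One finds that the Baer subline, all of whose points have weight $2$, gives $q^2+1$; a scattered set gives $q^3+q^2+q+1$; the club (one point of weight $3$, the rest of weight $1$) and $L_{C_{12}}$ ($q+1$ points of weight $2$, the rest of weight $1$) both give $q^3+1$; the sets with a single point of weight $2$, namely $L_{x^q-x^{q^3}}$ and $L_{C_{15}}$, give $q^3+q^2+1$; and the sets with exactly two points of weight $2$, namely $L_{B_{22}}$ and $L_{C_{13}}$, give $q^3+q^2-q+1$. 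Since these representatives exhaust the list, the five displayed values of $|L_U|$ are the only ones that occur, and each value pins down the stated family (or families).

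Finally I would attach the polynomial shapes: for the Baer subline and the club they are standard, for the scattered case I would cite the classification of \cite{CsZ2018} to write every scattered set as some $L_{x^q+\delta x^{q^3}}$ with $\N_{q^4/q}(\delta)\neq 1$ (the value $\delta=0$ giving the pseudoregulus type), the shape of $L_{C_{12}}$ comes from Theorem \ref{th:polminsize}, that of $L_{B_{22}}$ (which lies in the family of Corollary \ref{cor:ex1}) from Theorem \ref{th:pol2w}, and those of $L_{C_{13}}$ and $L_{C_{15}}$ from the two Propositions established just above. The point that will require care is not a deep obstacle but rather completeness together with inequivalence: I must check that cardinality genuinely separates the cases and that within each cardinality all $\mathrm{P\Gamma L}(2,q^4)$-classes are listed. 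In (ii) the two classes have distinct weight distributions and so are obviously inequivalent; in (iv) the inequivalence of $L_{B_{22}}$ and $L_{C_{13}}$ is precisely the Proposition separating the families of Corollaries \ref{cor:ex1} and \ref{cor:ex2}; and in (iii) the inequivalence of the two representatives follows from \cite{BoPol}. Collecting these facts yields the statement.
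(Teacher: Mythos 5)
Your proposal is correct and follows essentially the same route as the paper: the theorem is obtained by ``summing up'' the classification of \cite{BoPol} (together with simplicity of the relevant linear sets), sorting the representatives by cardinality via the weight-distribution identities, and attaching the polynomial shapes already established in Theorem \ref{th:polminsize}, Theorem \ref{th:pol2w}, the two appendix propositions, and \cite{CsZ2018} for the scattered case. No substantive difference from the paper's argument.
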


\end{document}